\documentclass[12pt,a4paper]{amsart}
\usepackage{amsmath, amssymb, amsthm, yhmath}
\input cyracc.def

\usepackage{graphicx}
\usepackage{caption} 
\usepackage{float} 
\usepackage{mathtools} 
\usepackage[colorlinks=false,
  linkbordercolor={0 0 1}, 
  citebordercolor={0 1 0}, 
  urlbordercolor={1 0 0}  
]{hyperref}
\numberwithin{equation}{section}
\usepackage{enumerate}
\newtheorem{theorem}{Theorem}[section]
\newtheorem{lemma}[theorem]{Lemma}

\newtheorem{corollary}[theorem]{Corollary}
\newtheorem{proposition}[theorem]{Proposition}

\theoremstyle{definition} 

\newtheoremstyle{remarkstyle} 
 {} 
 {} 
 {\normalfont} 
 {} 
 {\itshape} 
 {.}  
 { }  
 {} 
\theoremstyle{remarkstyle}
\newtheorem*{remark}{Remark} 

\allowdisplaybreaks[4]

\begin{document}

\author[K. Matsuzaki]{Katsuhiko Matsuzaki \textsuperscript{$a$}} 
\address{$a.$ Department of Mathematics, School of Education, Waseda University, Tokyo 169-8050, Japan} 
\email{matsuzak@waseda.jp} 

\author[F. Tao]{Fei Tao \textsuperscript{$b$}} 
\address{$b.$ Beijing International Center for Mathematical Research, Peking University, Beijing 100871, P. R. China} 
\email{ferrytau@pku.edu.cn}

\thanks{The first author is partially supported by Japan Society for the Promotion of Science (KAKENHI 23K25775 and 23K17656); the second author is partially supported by National Key R \& D Program of China (2025YFA1017500)}

\subjclass[2020]{Primary 30H30, 30H35, 30C62; Secondary 53A04, 30L05, 11K55}

\title[Little Bloch and $\mathrm{VMOA}$]
{Boundary Characterizations of Little Bloch and \\
$\mathrm{VMOA}$ Functions on the Half-Plane}
\keywords{Little Bloch function, $\mathrm{VMOA}$, asymptotically conformal, asymptotically smooth, chord-arc curve}

\begin{abstract}
We extend Pommerenke's characterizations of boundary curves of conformal mappings in terms of the little Bloch and $\mathrm{VMOA}$ conditions from the unit disk $\mathbb{D}$ to the upper half-plane $\mathbb{H}$. 
Let $G\colon \mathbb{H}\to\Omega$ be a conformal mapping onto an unbounded quasidisk $\Omega$ with $G(\infty)=\infty$, and let $g\colon \mathbb{R}\to\Gamma=\partial\Omega$ be its boundary extension. 
In the non-compact setting, the Euclidean smallness on the boundary curve is not necessarily comparable to the smallness of the parameter on $\mathbb{R}$. 
To overcome this difficulty, we use relative versions of the asymptotic conformality and the asymptotic smoothness with respect to the parametrization $g$. 
We prove that $\log G'\in B_0(\mathbb{H})$ is equivalent to the asymptotic conformality of $\Gamma$ relative to $g$, and also to the asymptotic symmetry of the embedding $g$. 
We further prove that $\log G'\in \mathrm{VMOA}(\mathbb{H})$ is equivalent to the asymptotic smoothness of $\Gamma$ relative to $g$, and also to the asymptotic smoothness of $g$. 
These results provide half-plane analogues of Pommerenke's theorems and clarify the role of the parametrization in the unbounded case.
\end{abstract}

\maketitle

\section{Introduction}\label{sec:introduction}

Let $\mathbb{H}=\{z\in\mathbb{C}:\operatorname{Im}z>0\}$ be the upper half-plane. 
For any conformal mapping $G$ of $\mathbb{H}$ into $\mathbb{C}$, $\log G'$ is a Bloch function in the sense that $\sup_{z\in\mathbb{H}}\, (\operatorname{Im}z)\left|(\log G')'(z)\right|<\infty$. 
We say that the Bloch function $\log G'$ belongs to the little Bloch space $B_0(\mathbb{H})$ if
\[
\lim_{\operatorname{Im}z\to 0_+}\,
(\operatorname{Im}z)\left|(\log G')'(z)\right|=0
\]
uniformly in $\operatorname{Re}z\in\mathbb{R}$. 
We also consider the analytic space $\mathrm{VMOA}(\mathbb{H})$, characterized either by the vanishing mean oscillation of its boundary values on $\mathbb{R}$, or equivalently by the vanishing Carleson measure condition for
\[
(\operatorname{Im}z)|(\log G')'(z)|^2\, dx\,dy.
\]
The purpose of this paper is to relate these analytic conditions to the small-scale geometry of the boundary curve.

Our work is motivated by the classical theorems of Pommerenke \cite{Po78}. Let $F$ be a conformal mapping of the unit disk $\mathbb{D}$ onto a bounded Jordan domain. 
Pommerenke showed that the boundary curve $\partial F(\mathbb{D})$ is an asymptotically conformal quasicircle if and only if $\log F'$ belongs to the little Bloch space on $\mathbb{D}$. 
He also proved the sharper, rectifiable version: $\partial F(\mathbb{D})$ is asymptotically smooth if and only if $\log F'$ belongs to
$\mathrm{VMOA}(\mathbb{D})$. 
These results establish a precise correspondence between the asymptotic geometry of the boundary curve and the vanishing analytic behavior of $\log F'$.

This paper extends these theorems to the non-compact half-plane setting. 
Throughout the paper, unless otherwise stated, we consider a conformal mapping $G\colon \mathbb{H}\to\Omega$ onto an unbounded quasidisk $\Omega\subset\mathbb{C}$ such that $G(\infty)=\lim_{z\to\infty}\,G(z)=\infty
$, and we let $g\colon\mathbb{R}\to\Gamma=\partial\Omega$ be its homeomorphic boundary extension. 
Since $\Omega$ is a quasidisk, the map $G$ extends to a quasiconformal self-homeomorphism of the complex plane, and therefore $g$ is a quasisymmetric embedding of $\mathbb{R}$ onto the quasicircle $\Gamma$.

A basic difficulty in the half-plane setting is the loss of compactness. 
In the unit disk, the boundary is compact, and hence the boundary parametrization and its inverse are automatically uniformly continuous. 
In the half-plane setting, however, intervals on $\mathbb{R}$ may drift to infinity, and neither $g$ nor $g^{-1}$ is necessarily uniformly continuous. 
Consequently, the usual definition of asymptotic conformality based solely on the Euclidean chord length is no longer adequate. 
To overcome this, we introduce a relative version of asymptotic conformality. 

Let $\gamma\colon X\to\Gamma$ be a parametrization of a quasicircle by a metric space $(X,d_X)$. 
We say that $\Gamma$ is \emph{asymptotically conformal relative to
$\gamma$} if
\[
\lim_{t\to 0}\,
\sup_{z_1,z_2\in \Gamma}\,\left\{
\max_{z\in\wideparen{z_1z_2}}
\frac{|z_1-z|+|z-z_2|}{|z_1-z_2|}
:
0<d_X(\gamma^{-1}(z_1),\gamma^{-1}(z_2))\leq t
\right\}
=1.
\]
Here $\wideparen{z_1z_2}$ denotes the subarc of $\Gamma$ joining $z_1$ and $z_2$. 
When $\gamma$ is the identity parametrization of $\Gamma$, this reduces to the usual notion of asymptotic conformality. 
In the present paper, the relevant parametrization is the boundary correspondence $g\colon \mathbb{R}\to\Gamma$.

The first main result of the paper is the following half-plane version of
Pommerenke's little Bloch theorem.

\begin{theorem}\label{thm:intro-little-bloch}
Let $G\colon \mathbb{H}\to\Omega$ be a conformal mapping onto an unbounded quasidisk with $G(\infty)=\infty$, and let $g\colon \mathbb{R}\to\Gamma=\partial\Omega$ be its boundary extension. 
Then the following conditions are equivalent:
 \begin{enumerate}
 \item $\log G'\in B_0(\mathbb{H})$;
 \item $\Gamma$ is an asymptotically conformal quasicircle relative to
 $g\colon \mathbb{R}\to\Gamma$;
 \item $g\colon \mathbb{R}\to\mathbb{C}$ is an asymptotically symmetric embedding with image $\Gamma$.
 \end{enumerate}
\end{theorem}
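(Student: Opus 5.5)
The plan is to prove the cycle (1)$\Rightarrow$(3)$\Rightarrow$(2)$\Rightarrow$(1). Throughout we rely on the standard fact that the quasisymmetry of $g$ turns the Koebe distortion estimates for $G$ into comparisons between $|g(x)-g(y)|$ and $|G'(x+i|x-y|)|\,|x-y|$. Asymptotic symmetry of $g$ is taken to mean that
\[
\frac{|g(x+t)-g(x)|}{|g(x)-g(x-t)|}\to 1 \quad\text{as } t\to 0,
\]
uniformly in $x\in\mathbb{R}$, the complex-valued analogue of the condition in the Gardiner--Sullivan sense. The key tool is the Möbius invariance of all three conditions under real affine maps $A(z)=az+b$ with $a>0$. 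Replacing $G$ by $G\circ A$, and normalizing the image by a complex affine map, leaves $(\operatorname{Im}z)|(\log G')'(z)|$ invariant. It also maps the parameter scale $t$ to $t/a$. So every uniform statement at scale $t$ near a point $x$ can be rescaled to a statement about a normalized map $G_{x,t}(z)=\bigl(G(x+tz)-G(x)\bigr)/\bigl(tG'(x+it)\bigr)$ at scale $1$. These normalized maps form a normal family of conformal maps of $\mathbb{H}$ that extend to $K$-quasiconformal maps of $\mathbb{C}$ fixing $0$ and $\infty$, because the quasidisk constant of $\Omega$ controls them uniformly.

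For (1)$\Rightarrow$(3), I argue by compactness. Suppose (3) fails along some $x_n$ and $t_n\to0$. Consider $G_n=G_{x_n,t_n}$. Condition (1) says that $\sup_{\operatorname{Im}z\ge \epsilon}(\operatorname{Im}z)|(\log G_n')'(z)|\to0$ for each fixed $\epsilon$, since $\operatorname{Im}(x_n+t_nz)\to0$ on compacta. So every subsequential limit $G_\infty$ has $(\log G_\infty')'\equiv0$, and $G_\infty$ is affine with $G_\infty(z)=z/i$ up to the normalization. The uniform quasiconformal extensions give locally uniform convergence up to $\mathbb{R}$, so $g_n(\pm1)\to G_\infty(\pm1)$. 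This yields $|g_n(1)|/|g_n(-1)|\to1$, which is a contradiction. The same compactness argument proves (1)$\Rightarrow$(2) directly. The rescaled boundary arcs converge in the Hausdorff sense to a segment, which forces the ratio $(|z_1-z|+|z-z_2|)/|z_1-z_2|$ to tend to $1$ uniformly whenever $|g^{-1}(z_1)-g^{-1}(z_2)|\le t$.

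For (3)$\Rightarrow$(2), I again use compactness together with a rigidity statement. A limit $g_\infty$ of rescalings $g_{x_n,t_n}$ is a quasisymmetric embedding of $\mathbb{R}$. Because the convergence in (3) is uniform in $x$ at all small scales, and rescaling sends small scales to all scales, $g_\infty$ satisfies $|g_\infty(x+t)-g_\infty(x)|=|g_\infty(x)-g_\infty(x-t)|$ for all $x$ and $t$. I then need to show that such an embedding is affine, i.e.\ its image is a line. Equal chords from every point mean that every point $g_\infty(x)$ is equidistant from $g_\infty(x\pm t)$. Iterating at dyadic scales forces $g_\infty$ to be a "midpoint" map, and combined with injectivity and continuity the image must be a line. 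I expect this rigidity step to be the main technical point of this implication, and I would handle it via the Chebyshev-type iteration in Pommerenke's disk proof.

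For (2)$\Rightarrow$(1), I take a sequence with $(\operatorname{Im}z_n)|(\log G')'(z_n)|\ge\delta$ and rescale to $z_n=i$. Condition (2), transported by the rescaling, says that the limit curve $g_\infty(\mathbb{R})$ has the three-point ratio identically $1$ on every arc. The relative condition is exactly what keeps the parameter scale fixed at $1$ in this rescaling, which is why it is needed. Hence the limit curve is a line, $G_\infty$ is a conformal map of $\mathbb{H}$ onto a half-plane fixing $\infty$, and so $G_\infty$ is affine. This contradicts $|(\log G_\infty')'(i)|\ge\delta$, which persists in the limit by local uniform convergence of derivatives.

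I expect the main obstacle to be justifying convergence of boundary values in the non-compact setting. Specifically, I must show that the normalized maps $G_{x,t}$ are uniformly controlled near $\mathbb{R}$ and at $\infty$, so that limits of boundary quantities equal boundary quantities of limits. For this I would use the uniform $K$-quasiconformal extension, which yields equicontinuity in the spherical metric, together with quasisymmetry of $g$ to bound $|g_{x,t}(\pm1)|$ above and below.
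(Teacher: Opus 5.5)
\textbf{There is a genuine gap: you use a weaker definition of asymptotic symmetry than the statement does.} The overall blow-up strategy is sound. But in the statement, (3) is the Brania--Yang three-point condition: for every $\varepsilon>0$ and every $t>0$ there is $\delta>0$ such that $|x-a|/|x-b|\le t$ implies $|g(x)-g(a)|/|g(x)-g(b)|\le(1+\varepsilon)t$ for all triples in an interval of length $<\delta$. You replace this by the symmetric two-point condition $|g(x+t)-g(x)|/|g(x)-g(x-t)|\to1$.

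For increasing maps of $\mathbb{R}$ the symmetric condition upgrades to the three-point one by additivity of increments. For embeddings into $\mathbb{C}$ chord lengths are not additive, so no such upgrade is available. This causes two problems.
\begin{enumerate}
\item Your (1)$\Rightarrow$(3) step only checks $|g_n(1)|/|g_n(-1)|\to1$, so it does not yield the stated (3).
\item The weak hypothesis pushes all the difficulty of (3)$\Rightarrow$(2) into a rigidity lemma, and your sketch of that lemma fails. The identity $|g_\infty(x+t)-g_\infty(x)|=|g_\infty(x)-g_\infty(x-t)|$ only places $g_\infty(x)$ on the perpendicular bisector of the chord from $g_\infty(x-t)$ to $g_\infty(x+t)$, not at its midpoint. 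The arclength parametrization of a circle satisfies the identity at every point and every scale, and it is injective on every interval of length $<2\pi$. So ``iterating at dyadic scales forces a midpoint map'' is false, and any proof would have to use global injectivity or quasisymmetry essentially. The appeal to a ``Chebyshev-type iteration'' does not supply this.
\end{enumerate}

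\textbf{Both defects disappear inside your own framework once you use the stated definition.}
\begin{itemize}
\item \emph{(1)$\Rightarrow$(3).} Take failing triples $x_n,a_n,b_n$ in intervals of length tending to $0$, and rescale at $x_n$ by $s_n=|x_n-b_n|$. Then $b_n'=\pm1$, $|a_n'|\le t$, and along a subsequence $a_n'\to a'$. With your normalization $G_{x,t}'(i)=1$ the limit is $G_\infty(z)=z$ (not $z/i$). Since the $K$-quasiconformal extensions converge locally uniformly on $\mathbb{C}$, the image ratio tends to $|a'|\le t<(1+\varepsilon)t$, a contradiction. Note also that the vanishing of $(\operatorname{Im}z)|(\log G_n')'(z)|$ holds on compact subsets of $\mathbb{H}$, not on all of $\{\operatorname{Im}z\ge\epsilon\}$.
\item \emph{(3)$\Rightarrow$(2).} A limit of normalized rescalings satisfies $|g_\infty(x)-g_\infty(a)|/|g_\infty(x)-g_\infty(b)|\le|x-a|/|x-b|$ for all distinct triples. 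Swapping $a$ and $b$ gives equality, so $|g_\infty(x)-g_\infty(a)|=\lambda|x-a|$. Equality in the triangle inequality then forces the image to be a line, so the rigidity step becomes trivial.
\end{itemize}

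\textbf{Comparison with the paper.} With these repairs your proof is a correct route, different from the paper's.
\begin{itemize}
\item For (1)$\Rightarrow$(3), the paper integrates $(\log G')'$ and then $G'$ along polygonal paths in a rectangle of height $|x-b|$. This gives an explicit $\delta(\varepsilon,t)$.
\item For (3)$\Rightarrow$(2), the paper quotes Brania--Yang.
\item For (2)$\Rightarrow$(1), the paper adapts Pommerenke's kernel-convergence argument through stadium domains.
\end{itemize}
Your (2)$\Rightarrow$(1) is a real simplification: one normal-family blow-up, with the limit curve forced to be a line, replaces the localization. It handles drifting intervals automatically through translation and dilation invariance. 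The price is that your arguments give no explicit $\delta$.
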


Here, a quasisymmetric embedding $g\colon \mathbb{R}\to\mathbb{C}$ is said to be \emph{asymptotically symmetric} if, for every $\varepsilon>0$ and every $t>0$, there exists $\delta>0$ such that
\[
\frac{|x-a|}{|x-b|}\leq t
\quad\implies\quad
\frac{|g(x)-g(a)|}{|g(x)-g(b)|}\leq (1+\varepsilon)t 
\]
whenever $x,a,b\in\mathbb{R}$ lie in an interval of length less than $\delta$.

Thus Theorem~\ref{thm:intro-little-bloch} establishes that the vanishing analytic behavior of the pre-Schwarzian derivative $(\log G')'$, the relative asymptotic conformality of the boundary curve, and the infinitesimal symmetry of the boundary parametrization are all equivalent.

The proof of Theorem~\ref{thm:intro-little-bloch} proceeds in several steps. We first prove one implication under the additional assumption that $g$ is uniformly continuous. 
In this case, the condition that $\operatorname{dist}(G(x+iy),\Gamma)\to0$ as $y\to0_+$ can be made uniform in $x$, which reduces the problem to Pommerenke's theorem for the disk via a family of uniformly normalized stadium domains over intervals of a fixed length. 
We then prove the converse implication under the corresponding assumption that $g^{-1}$ is uniformly continuous. 
These preliminary statements clarify
exactly where non-compactness enters the argument.

The uniform continuity assumptions are then removed by using the relative
notion introduced above. 
The implication from the relative asymptotic conformality to $\log G'\in B_0(\mathbb{H})$ follows by adapting Pommerenke's compactness argument to a sequence of normalized domains; the relative condition provides the required small-scale control even when the relevant intervals drift to infinity.
The converse implication is obtained more directly by estimating $G'$ in
rectangles near the boundary. 

Integrating $G'$ along polygonal paths then gives the asymptotic symmetry of the boundary map $g$. 
Combining this with the metric characterization of asymptotically symmetric embeddings yields Theorem~\ref{thm:intro-little-bloch}.

We next consider Pommerenke's $\mathrm{VMOA}$ theorem. 
Recall that a locally rectifiable Jordan curve $\Gamma$ is \emph{chord-arc} if the length of the subarc $\wideparen{ab}$ (when $\Gamma$ is a closed curve, the subarc of smaller diameter joining $a$ and $b$) is bounded by a constant multiple of $|a-b|$. 
It is \emph{asymptotically smooth} if the chord-arc constant tends to $1$ uniformly at small scales:
\[
\lim_{|a-b|\to0}\,\frac{\ell(\wideparen{ab})}{|a-b|}=1,
\]
where $\ell(\wideparen{ab})$ denotes the length of the arc $\wideparen{ab}$.
As in the asymptotically conformal case, we need a relative formulation. 

Let $\gamma\colon X\to\Gamma$ be a parametrization of a chord-arc curve. 
We say that $\Gamma$ is \emph{asymptotically smooth relative to $\gamma$} if
\[
\lim_{t\to0}\,\sup_{z_1,z_2\in \Gamma}\,\left\{\frac{\ell(\wideparen{z_1z_2})}{|z_1-z_2|}:0<d_X(\gamma^{-1}(z_1),\gamma^{-1}(z_2))\leq t\right\}=1.
\]
For the boundary parametrization $g\colon \mathbb{R}\to\Gamma$, this condition means that the chord-arc constant of $g([a,b])$ tends to $1$ uniformly in $a$ and $b$ as $|a-b|\to0$, even without assuming that the Euclidean smallness on $\Gamma$ is uniformly equivalent to the smallness of the parameter on $\mathbb{R}$.

The second main result is the following half-plane version of Pommerenke's $\mathrm{VMOA}$ theorem.

\begin{theorem}\label{thm:intro-vmoa}
Let $G\colon \mathbb{H}\to\Omega$ be a conformal mapping onto an unbounded quasidisk with $G(\infty)=\infty$, and let $g\colon \mathbb{R}\to\Gamma=\partial\Omega$ be its boundary extension. 
Then the following conditions are equivalent:
 \begin{enumerate}
 \item $\log G'\in \mathrm{VMOA}(\mathbb{H})$;
 \item $\Gamma$ is an asymptotically smooth chord-arc curve relative to
 $g\colon \mathbb{R}\to\Gamma$;
 \item $g\colon \mathbb{R}\to\mathbb{C}$ is an asymptotically smooth embedding whose image $\Gamma$ is a chord-arc curve.
\end{enumerate}
\end{theorem}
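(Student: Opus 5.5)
The plan is to build the proof on Theorem~\ref{thm:intro-little-bloch} together with the Carleson-measure characterization of $\mathrm{VMOA}(\mathbb{H})$. The three conditions in Theorem~\ref{thm:intro-vmoa} are the rectifiable counterparts of those in Theorem~\ref{thm:intro-little-bloch}. I will prove (1)$\Rightarrow$(3)$\Rightarrow$(2)$\Rightarrow$(1).

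\emph{(1)$\Rightarrow$(3).}
Since $\mathrm{VMOA}(\mathbb{H})\subset B_0(\mathbb{H})$, Theorem~\ref{thm:intro-little-bloch} gives asymptotic symmetry of $g$. Next, $\log G'\in\mathrm{VMOA}\subset\mathrm{BMOA}$ with small norm at small scales. Hence $e^{\operatorname{Re}\log G'}=|G'|$ has boundary values satisfying a reverse H\"older / $A_\infty$ condition on short intervals, via John--Nirenberg applied to intervals of length $<\delta$ with uniformly small BMO norm. This shows that $g$ is locally absolutely continuous, so $\Gamma$ is locally rectifiable. The $A_\infty$ property, combined with quasisymmetry, makes $\Gamma$ chord-arc; this is the standard Jerison--Kenig/Pommerenke argument, localized to each interval.

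For the smoothness of $g$, take any interval $I$ with $|I|<\delta$. Write $\log g'=\log|g'|+i\arg g'$ on $I$, and let $c_I$ be its average. Then
\[
\frac{\ell(g(I))}{|g(b)-g(a)|}=\frac{\int_I |g'|}{\left|\int_I g'\right|},
\]
and both integrals are controlled by $|I|\,|e^{c_I}|$ up to factors $1+O(\|\log G'\|_{\mathrm{BMO}(I)})$, again by John--Nirenberg. So the ratio tends to $1$ uniformly as $|I|\to 0$. Here I take the asymptotic smoothness of the embedding $g$ to mean exactly this uniform ratio, together with $\log|g'|$ having vanishing oscillation.

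\emph{(3)$\Rightarrow$(2).}
This step is essentially definitional: taking $z_1=g(a)$ and $z_2=g(b)$ with $|a-b|\le t$ gives the relative condition directly.

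\emph{(2)$\Rightarrow$(1).}
This is the main obstacle. First, relative asymptotic smoothness implies relative asymptotic conformality, since $|z_1-z|+|z-z_2|\le \ell(\wideparen{z_1z_2})$. So Theorem~\ref{thm:intro-little-bloch} already gives $\log G'\in B_0$. Upgrading to VMOA requires the vanishing Carleson condition
\[
\frac{1}{|I|}\int_{Q_I}(\operatorname{Im}z)\,|(\log G')'(z)|^2\,dx\,dy\to 0
\quad\text{uniformly as } |I|\to 0 .
\]
I plan to argue by contradiction, with compactness as in the proof of Theorem~\ref{thm:intro-little-bloch}. Suppose there are intervals $I_n$ with $|I_n|\to0$ on which the Carleson quotient stays at least $\eta>0$. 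Rescale $I_n$ to $[0,1]$ and set
\[
G_n(z)=\frac{G(a_n+|I_n|z)-G(a_n)}{G(a_n+|I_n|)-G(a_n)} .
\]
Since $g$ is uniformly quasisymmetric and chord-arc, the $G_n$ form a normal family of uniform quasiconformal extensions. Condition (2), which is scale-invariant under this normalization, says that the images of $[-M,M]$ become chord-arc with constant tending to $1$ for each fixed $M$. A subsequence then converges to a conformal map whose boundary image of $\mathbb{R}$ has chord-arc constant $1$, hence is a line; so the limit is affine and $(\log G_n')'\to 0$ locally uniformly.

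The difficulty is that local uniform convergence does not control the Carleson integral near $\mathbb{R}$. To handle this, I would first localize: transfer the problem to a bounded stadium domain over a fixed enlarged interval, as the paper does for the little Bloch case. There I would invoke Pommerenke's disk theorem in its quantitative form. That form says the Carleson norm of $\log F'$ on a subarc is controlled by the local chord-arc constant minus $1$, and it follows from the Jerison--Kenig estimate $\|\log F'\|_{\mathrm{BMO}}\lesssim$ small when the chord-arc constant is near $1$. The uniform normalization supplied by the relative condition makes these local constants uniform in $n$. This yields the contradiction and completes the cycle of implications.
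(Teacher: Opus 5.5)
Your (1)$\Rightarrow$(3) estimate is fine: it is the same John--Nirenberg comparison of $\int_I|g'|$ with $|\int_I g'|$ as in Theorem~\ref{import_VMOA}. Your (3)$\Rightarrow$(2) is also fine. But (2)$\Rightarrow$(1), the heart of the theorem, has a genuine gap.

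The blow-up step gives nothing, as you concede. The Carleson quotient over $Q_{I_n}$ collects contributions from every scale below $|I_n|$. The only uniform smallness available near $\mathbb{R}$ is $y|(\log G')'|\le\epsilon$ from $B_0$, and that only gives the divergent bound $\epsilon^2\int_0^{|I_n|}dy/y$. So everything rests on your asserted ``quantitative local form'' of Pommerenke's theorem. That is not a citable result, and it does not follow from what you invoke. Estimates of the type ``chord-arc constant $1+\epsilon$ implies small $\mathrm{BMO}$ norm'' are global statements about unbounded curves that are nearly flat at \emph{all} scales. The boundary of a localized bounded domain such as $G(D_I)$ or $G(Q_I)$ is a closed curve, so its chord-arc constant is bounded away from $1$ (for any closed curve it is at least $\sqrt2$). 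Only its small-scale behaviour along $g(I)$ is nearly flat. The real difficulty is non-locality: the oscillation of $\log g'$ on a short interval depends on the whole boundary. You must show that the far-away part contributes little, uniformly in the position of the interval, and the proposal does not do this.

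This is exactly what the paper's proof of Theorem~\ref{ASmooth->VMOA} supplies, by rerunning Pommerenke's argument with explicit constants:
\begin{enumerate}
\item It shows that $\partial G(Q_I)$ is chord-arc with a constant $L$ independent of $I$.
\item It obtains $\log G'\in\mathrm{BMOA}(\mathbb{H})$ from the global chord-arc hypothesis. You need this too, since $\mathrm{VMOA}$ presupposes $\mathrm{BMOA}$.
\item It proves $\int_{-\pi}^{\pi}e^{v}P_\rho\,dt\to1$ uniformly, where $v(t)=\log|g_I'(e^{i(\theta+t)})/G_I'(\rho e^{i\theta})|$. The tail $|t|\ge\lambda(1-\rho)$ is bounded by the $B_0$ radial estimate \eqref{radial} together with Lemma~\ref{boundary-arc-estimate}, with constants depending only on $L$ and $r$. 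The near part is bounded by the relative asymptotic smoothness and the Visser--Ostrowski quotient.
\item Inequality \eqref{baseineq} then gives vanishing oscillation of $\operatorname{Re}\log g_I'$ uniformly in $I$. This is transferred to $\operatorname{Re}\log g'$ through the bi-Lipschitz map $S_I$, and the Hilbert transform recovers the imaginary part.
\end{enumerate}

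Separately, in (1)$\Rightarrow$(3) your chord-arc claim ``localized to each interval'' only controls arcs $g([a,b])$ with $|a-b|<\delta$. The chord-arc property is a condition at all scales, while $\mathrm{VMO}$ constrains only small ones. Theorem~\ref{import_VMOA} likewise verifies only \eqref{D_1}, so the global chord-arc property needs a separate argument.
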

Here, the asymptotic smoothness of the embedding $g\colon \mathbb{R}\to\mathbb{C}$ onto a chord-arc curve $\Gamma$ can be defined as follows:
for every $\varepsilon>0$ and $t>0$ there exists $\delta>0$ such that
\[
\frac{|x-a|}{|x-b|}\leq t
\quad\implies\quad
\frac{\ell(\wideparen{g(x)g(a)})}{|g(x)-g(b)|}\leq (1+\varepsilon)t
\]
whenever $x,a,b\in\mathbb{R}$ lie in an interval with $|x-a| < \delta$ and $|x-b| < \delta$ (see Section~\ref{sec:preliminaries} for details).

The proof of Theorem~\ref{thm:intro-vmoa} combines real-variable estimates with the conformal-geometric arguments used for the little Bloch theorem. 
If $\log G'\in\mathrm{VMOA}(\mathbb{H})$, then its boundary values belong to
$\mathrm{VMO}(\mathbb{R})$. 
Together with the John--Nirenberg inequality, this yields uniform estimates of the ratio between the arc length of $g([a,b])$ and the chord length $|g(a)-g(b)|$ on sufficiently small intervals $[a,b]$. 
It then follows that $g$ is asymptotically smooth.

Conversely, suppose that the boundary curve is asymptotically smooth in the
relative sense. 
The chord-arc condition first implies the $\mathrm{BMOA}$ regularity of $\log G'$. 
One then localizes the problem over intervals of $\mathbb{R}$ by means of square domains and normalized conformal maps from the unit disk. 
On each localized disk, Pommerenke's argument converts the asymptotic smoothness into the vanishing mean oscillation of the logarithmic boundary derivative. 
The relative formulation ensures that this vanishing is uniform over all intervals, including those escaping to infinity. 
Finally, the invariance of $\mathrm{VMO}$ under the Hilbert transform allows one to pass from the real part to the full boundary values of $\log G'$, and hence to conclude that
$\log G'\in\mathrm{VMOA}(\mathbb{H})$.

Together, the two main theorems in this paper show that the corresponding results for the unit disk remain valid in the half-plane setting once the appropriate relative geometric notions are introduced. 
We note that the relative formulation for curves is equivalent to the original one for the unit disk, so our results may be regarded as generalizations of Pommerenke's.

We also record a dimensional consequence of the little Bloch theorem. 
Although quasicircles may have Hausdorff dimension arbitrarily close to $2$ \cite[Theorem 1]{Smi10}, the asymptotically conformal condition forces the Hausdorff dimension to be exactly $1$. 
We prove this first for bounded asymptotically conformal curves by showing that the corresponding conformal map is locally H\"older continuous for every exponent less than $1$. 
The unbounded case then follows by localization, using the half-plane results developed earlier in the paper.

We now conclude the introduction by outlining the organization of the paper. 
Section~\ref{sec:preliminaries} collects the necessary preliminaries. 
We review Bloch and little Bloch functions on $\mathbb{H}$, along with $\mathrm{BMOA}$ and $\mathrm{VMOA}$ spaces. 
We then recall the definitions of quasisymmetric, asymptotically symmetric, and asymptotically smooth embeddings, as well as those of chord-arc curves and asymptotically conformal quasicircles. 
Additionally, we provide a simple half-plane version of the standard Bloch estimate for the logarithmic derivative of a conformal map.

In Section~\ref{sec:asymptotic_conformality_implies_little_bloch} we prove that the asymptotic conformality of the boundary implies the little Bloch condition. 
The first result is obtained under the auxiliary assumption that the boundary map $g$ is uniformly continuous. 
The proof uses stadium domains over intervals of a fixed length and reduces the argument locally to Pommerenke's disk theorem. 
We also prove a local version that does not require uniform continuity.

Section~\ref{sec:little_bloch_implies_asymptotic_conformality} concerns the converse implication under the dual assumption that $g^{-1}$ is uniformly continuous. 
This section follows Pommerenke's original method based on the Visser--Ostrowski quotient and shows how the little Bloch condition yields the asymptotic conformality of the boundary curve.

Section~\ref{sec:asymptotically_symmetric_embeddings} removes the uniform continuity assumptions by using the notion of the relative asymptotic conformality. 
We compare this geometric condition with the asymptotic symmetry of embeddings, extend a theorem of Brania and Yang \cite{BY} to the present non-compact setting, and prove Theorem~\ref{thm:intro-little-bloch}, thereby completing the half-plane analogue of Pommerenke's little Bloch characterization. 

Section~\ref{sec:hausdorff_dimension_of_asymptotically_conformal_curves} is devoted to the Hausdorff dimension of these curves. 
We prove that bounded asymptotically conformal quasicircles have Hausdorff dimension $1$, and then extend the same conclusion to the unbounded cases via localization.

Finally, Section~\ref{sec:asymptotic_smoothness_and_vmoa} treats the $\mathrm{VMOA}$ cases. 
After recalling the John--Nirenberg
inequality in a local form, we prove that $\log G'\in\mathrm{VMOA}(\mathbb{H})$ implies the asymptotic smoothness of the boundary embedding. 
We then prove the converse implication from the relative asymptotic smoothness to $\mathrm{VMOA}$, using localized square domains and Pommerenke's argument on the unit disk. 
These results are assembled into Theorem~\ref{thm:intro-vmoa}, the half-plane analogue of Pommerenke's $\mathrm{VMOA}$ characterization.

\medskip
\noindent
{\bf Acknowledgements.}
The authors would like to thank Huaying Wei for her helping explanations concerning Pommerenke's work.

\section{Preliminaries}\label{sec:preliminaries}

\subsection{Bloch functions}
A holomorphic function $F$ on the upper half-plane $\mathbb{H}$ is called a {\it Bloch function} if it satisfies 
\[
\sup_{z \in \mathbb{H}}\,(\operatorname{Im}z)|F'(z)| <\infty.
\]
The Banach space of all Bloch functions (modulo additive constants) is denoted by $B(\mathbb{H})$. If $F \in B(\mathbb{H})$ vanishes at the boundary in the sense that 
\[
\lim_{\operatorname{Im}z \to 0_+}\,(\operatorname{Im}z)|F'(z)| =0
\]
uniformly in $\operatorname{Re}z \in \mathbb{R}$, then $F$ is called a {\it little Bloch function}. 
The collection of all such functions forms a closed subspace of $B(\mathbb{H})$, denoted by $B_0(\mathbb{H})$. 
More generally, $F$ is called a {\it locally little Bloch function} if for every bounded interval $I \subset \mathbb{R}$,
\[
\lim_{\operatorname{Im}z \to 0_+}\,(\operatorname{Im}z )|F'(z)| =0
\]
uniformly for $\operatorname{Re}z \in I$. 
The set of all such functions is denoted by $B_0^{\mathrm{loc}}(\mathbb{H})$. 

It is a classical result that for a conformal mapping $F$ on the unit disk $\mathbb{D}$, the Bloch norm of $\log F'$ defined by the supremum of $(1-|z|^2)|(\log F')'(z)|$ is bounded by $6$ (see \cite[Proposition 4.1]{Pom}). 
For a conformal mapping $G$ on the upper half-plane $\mathbb{H}$, the corresponding bound is the same, and it becomes $3$ if we use the density $\operatorname{Im}w$ instead of $2\operatorname{Im}w$.
For the convenience of the reader, we include a simple proof. See also \cite[Theorem 5.3.1]{Su}.

\begin{proposition}\label{bloch_norm_H:label}
Let $G$ be a conformal mapping of the upper half-plane $\mathbb{H}$ into $\mathbb{C}$. 
Then
\[
\sup_{w \in \mathbb{H}} \,(\operatorname{Im}w)\left| (\log G')'(w) \right| \leq 3.
\]
\end{proposition}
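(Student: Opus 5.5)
The plan is to reduce to the classical disk estimate, $(1-|z|^2)|(\log F')'(z)|\le 6$ for univalent $F$ on $\mathbb{D}$, by composing $G$ with a Möbius map. Concretely, I would transport the disk estimate at the origin to an arbitrary point $w_0\in\mathbb{H}$.

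Fix $w_0=u_0+iv_0\in\mathbb{H}$ and consider the Cayley-type map
\[
T(z)=\frac{w_0-\overline{w_0}\,z}{1-z},
\]
which sends $\mathbb{D}$ conformally onto $\mathbb{H}$ with $T(0)=w_0$. Then $F=G\circ T$ is univalent on $\mathbb{D}$. The chain rule gives
\[
\log F'(z)=\log G'(T(z))+\log T'(z),
\]
so, differentiating,
\[
(\log F')'(0)=(\log G')'(w_0)\,T'(0)+\frac{T''(0)}{T'(0)}.
\]
A direct computation from $T'(z)=(w_0-\overline{w_0})/(1-z)^2$ yields $T'(0)=2iv_0$ and $T''(0)/T'(0)=2$. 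Hence
\[
2iv_0\,(\log G')'(w_0)=(\log F')'(0)-2.
\]

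The key input is the sharp Bieberbach coefficient bound. Normalize $F$ to $f=(F-F(0))/F'(0)=z+a_2z^2+\cdots\in S$. Then $|a_2|\le 2$ gives $|(\log F')'(0)|=|F''(0)/F'(0)|=|2a_2|\le 4$.

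Naively this bounds $2v_0|(\log G')'(w_0)|$ by $|(\log F')'(0)-2|\le 6$, i.e. by $4+2$, which is exactly the claimed bound $v_0|(\log G')'(w_0)|\le 3$. No finer argument is needed, since the naive triangle inequality already gives the stated constant.

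Taking the supremum over $w_0\in\mathbb{H}$ finishes the proof. The only step needing care is the computation of $T''(0)/T'(0)$ and the factor $2v_0$, together with the check that $T$ indeed maps $\mathbb{D}$ onto $\mathbb{H}$. The latter holds because $T$ sends $0\mapsto w_0$ and $\infty\mapsto\overline{w_0}$, the reflection of $w_0$ in $\mathbb{R}$, so $T$ maps the unit circle onto $\mathbb{R}$ and $\mathbb{D}$ onto the half-plane containing $w_0$.

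I expect no real obstacle beyond this bookkeeping. Alternatively, one could cite the disk Bloch estimate and invoke the Möbius invariance of $(1-|z|^2)|(\log F')'|$ up to the additive error term $T''/T'$, which is the same computation.
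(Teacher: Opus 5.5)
Your proof is correct and is essentially the paper's argument. The paper composes $G$ with the fixed Cayley map $\tau(z)=i\frac{1-z}{1+z}$ and applies the pointwise disk estimate $\bigl|(1-|z|^2)F''/F'-2\bar z\bigr|\le 4$, which itself comes from $|a_2|\le 2$, to get $\frac12(2+4)=3$. You instead build the disk automorphism into a point-dependent map $T$ with $T(0)=w_0$ and use $|a_2|\le 2$ at the origin directly.

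One small fix: the symmetry of $0,\infty$ and of $w_0,\overline{w_0}$ does not by itself force $T(\partial\mathbb{D})=\mathbb{R}$, since every Apollonius circle for $w_0,\overline{w_0}$ has that symmetry. Add that $T(1)=\infty$, or check directly that $\overline{T(z)}=T(z)$ for $|z|=1$.
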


\begin{proof}
Let $\tau(z) = i\frac{1-z}{1+z}$ be the M\"obius transformation from $\mathbb{D}$ onto $\mathbb{H}$, and let $F = G \circ \tau$.
A direct computation gives
\[
(\operatorname{Im} w)\left|\frac{G''(w)}{G'(w)}\right|= \frac{1}{2}\left|(1-|z|^2)\frac{F''(z)}{F'(z)}+ \frac{2(1-|z|^2)}{1+z}\right|,
\]
where $w = \tau(z)$.

By a classical estimate for conformal mappings on the unit disk $\mathbb{D}$ (see \cite[Proposition 1.2]{Pom}), we have 
\[
\left|(1-|z|^2)\frac{F''(z)}{F'(z)} - 2\bar z\right| \leq 4.
\]
It follows that
 \begin{align*}
 (\operatorname{Im} w)\left|\frac{G''(w)}{G'(w)}\right|
 &= \frac{1}{2}\left|2\bar z + \frac{2(1-|z|^2)}{1+z} + \left((1-|z|^2)\frac{F''(z)}{F'(z)} - 2\bar z\right) \right|\\
 &= \frac{1}{2}\left|2\cdot \frac{1+\bar z}{1+z} + \left((1-|z|^2)\frac{F''(z)}{F'(z)} - 2\bar z\right) \right|\leq 3.
 \end{align*}
This completes the proof.
\end{proof}

\subsection{VMOA functions}
Let $I_0$ be a closed interval on $\mathbb{R}$. 
A locally integrable function $u \in L^1_{\mathrm{loc}}(I_0)$ is said to be of \emph{bounded mean oscillation} if
\[
\|u\|_{\mathrm{BMO}(I_0)}\coloneqq\sup_{I \subset I_0}\frac{1}{|I|}\,\int_I |u(x) - u_I|\,dx< \infty,
\]
where the supremum is taken over all bounded intervals $I \subset I_0$, 
and $u_I$ denotes the integral mean of $u$ over $I$. 
The collection of such functions is denoted by $\mathrm{BMO}(I_0)$, which becomes a Banach space after identifying functions that differ by a constant. 
A function $u \in \mathrm{BMO}(\mathbb{R})$ is said to be of \emph{vanishing mean oscillation} if
\[
\lim_{|I| \to 0}\,\frac{1}{|I|}\int_I |u(x) - u_I|\,dx= 0.
\]
The set of all such functions is denoted by $\mathrm{VMO}(\mathbb{R})$, which is a closed subspace of $\mathrm{BMO}(\mathbb{R})$ introduced in \cite{Sar75}. 

A holomorphic function $F$ on $\mathbb{H}$ belongs to $\mathrm{BMOA}(\mathbb{H})$ if its boundary values, taken in the sense of non-tangential limits, lie in $\mathrm{BMO}(\mathbb{R})$. 
It belongs to $\mathrm{VMOA}(\mathbb{H})$ if, in addition, its boundary values lie in $\mathrm{VMO}(\mathbb{R})$. 
Equivalently, $F \in \mathrm{BMOA}(\mathbb{H})$ (or $\mathrm{VMOA}(\mathbb{H})$) if and only if the measure $|F'(z)|^{2}\, y\, dx\, dy$ is a Carleson measure (respectively, a vanishing Carleson measure) on $\mathbb{H}$ (see, for example, \cite{Pom,Gi}).

\subsection{Asymptotically symmetric and smooth embeddings}

A \emph{quasisymmetric embedding} is an embedding $F\colon X \to Y$ between metric spaces such that there exists an increasing homeomorphism $\eta\colon [0,\infty) \to [0,\infty)$ satisfying
\begin{equation}\label{eq:qs}\tag{QS}
\frac{|x-a|}{|x-b|} \leq t \implies \frac{|F(x)-F(a)|}{|F(x)-F(b)|} \leq \eta(t),
\end{equation}
for all distinct points $x,a,b \in X$. 
It follows directly from \eqref{eq:qs} that the inverse of a quasisymmetric embedding is quasisymmetric, and that the composition of quasisymmetric embeddings is again quasisymmetric.

Quasisymmetry was introduced by Beurling and Ahlfors \cite{BA} in their study of the boundary correspondence of quasiconformal self-maps of the upper half-plane. 
Later, Tukia and V\"ais\"al\"a \cite{TV} formulated a general definition of quasisymmetry in the setting of metric spaces.

In particular, if $F\colon \mathbb{R}\to\mathbb{C}$ is a quasisymmetric embedding, then $F(\mathbb{R})$ is a quasicircle (see \cite{Ah}). Equivalently, a quasicircle can be realized as the image of the real axis under a quasiconformal self-map of the plane. 

A quasisymmetric embedding $F\colon X \to Y$ between metric spaces is called \emph{asymptotically symmetric} if for all $\varepsilon > 0$ and $t > 0$ there exists a $\delta > 0$ such that 
\begin{equation}\label{eq:AS}\tag{AS}
\frac{|x - a|}{|x - b|} \leq t \implies \frac{|F(x) - F(a)|}{|F(x) - F(b)|} \leq (1 + \varepsilon) t, 
\end{equation}
where $x, a, b \in X$ are distinct points contained in a ball of radius $\delta$.

The notion of asymptotically symmetric embeddings was first introduced by Brania and Yang \cite{BY}. 
In their paper, an asymptotically symmetric embedding is not assumed to be quasisymmetric. 
In contrast, throughout this paper we require an asymptotically symmetric embeddings to be quasisymmetric. In the compact case, quasisymmetry is automatic.

\subsection{Asymptotically conformal and smooth quasicircles}

We formally recall some definitions from Section~\ref{sec:introduction}. Let $\Gamma$ be a Jordan curve passing through $\infty$. 
A quasicircle $\Gamma$ is said to be \emph{asymptotically conformal} or \emph{asymptotically symmetric} if
\begin{equation}\label{AC}
\lim_{|a-b| \to 0}\, \max_{w \in \wideparen{ab}} \frac{|a-w|+|w-b|}{|a-b|}=1
\end{equation}
for any $a,b \in \Gamma$ with $|a-b|\to 0$ uniformly. 
A locally rectifiable curve $\Gamma$ is called {\em chord-arc} if there exists $C \geq 1$ such that $\ell(\wideparen{ab})/|a-b| \leq C$. 
The definition of a chord-arc curve remains equivalent if the Euclidean distance $|a - b|$ is replaced by the spherical distance $d_{\widehat {\mathbb{C}}}(a,b)$. 
In particular, the chord-arc property for a Jordan curve $\Gamma \subset \widehat {\mathbb{C}}$ is invariant under M\"obius transformations (see \cite[p.~877]{Mac} and \cite[Section~7]{Tu}). 
A chord-arc curve $\Gamma$ is said to be \emph{asymptotically smooth} if the ratio of arc length to chord length tends to $1$ uniformly at small scales. More precisely, the curve $\Gamma$ satisfies
\[
\lim_{|a-b| \to 0} \,\frac{\ell(\wideparen{a b})}{|a - b|} = 1
\]
uniformly. 
The above definition of asymptotic smoothness is the natural extension to curves passing through $\infty$ of the corresponding notion for bounded Jordan curves introduced in \cite{Po78}.

We say that an asymptotically symmetric embedding $g$ parametrizing a chord-arc curve is \emph{asymptotically smooth} if, for every $0<\varepsilon<1$, there exists $\delta>0$ such that for all $a, b \in \mathbb{R}$,
\begin{align}\label{D_1}\tag{$\rm D_1$}
 0<|a-b|<\delta\;\implies\;\frac{\ell(\wideparen{g(a)g(b)})}{|g(a)-g(b)|}\leq 1+\varepsilon.
\end{align}
Clearly, this condition is equivalent to saying that $\Gamma$ is asymptotically smooth relative to $g$, which has been defined in the introduction.

To provide an analogue of \eqref{eq:AS}, we can equivalently characterize an asymptotically smooth embedding as follows: for every $0<\varepsilon<1$ and $t>0$, there exists $\delta_1>0$ such that for all distinct $x, a, b \in \mathbb{R}$ with $|x-a| < \delta_1$ and $|x-b| < \delta_1$,
\begin{align}\label{D_2}\tag{$\rm D_2$}
 \frac{|x-a|}{|x-b|}\leq t\;\implies\;\frac{\ell(\wideparen{g(x)g(a)})}{|g(x)-g(b)|}\leq (1+\varepsilon)t.
\end{align}
\begin{remark}
 The implication \eqref{D_2} $\Rightarrow$ \eqref{D_1} follows immediately by letting $b\to a$ in \eqref{D_2}. 
 Conversely, assume that \eqref{D_1} holds. 
 Given $0<\varepsilon<1$ and $t>0$, choose $\varepsilon_0=\varepsilon/3$. 
 By our assumptions, there exists $\delta_0>0$ such that \eqref{eq:AS} holds for $\varepsilon_0$, and there exists $\delta>0$ such that \eqref{D_1} holds for $\varepsilon_0$. 
 Let $\delta_1 = \min\{\delta_0, \delta\}$. If $|x-a|,|x-b|<\delta_1$, then 
 \begin{align*}
 \frac{|x-a|}{|x-b|}\leq t\;\implies\;\frac{\ell(\wideparen{g(x)g(a)})}{|g(x)-g(b)|}\leq(1+\varepsilon_0)\frac{|g(x)-g(a)|}{|g(x)-g(b)|}\leq (1+\varepsilon_0)^2t\leq (1+\varepsilon)t.
\end{align*}
\end{remark}

\section{Asymptotic conformality implies little Bloch}\label{sec:asymptotic_conformality_implies_little_bloch}

It was proved in \cite[Theorem 1]{Po78} that for a conformal mapping $F$ from the unit disk $\mathbb{D}$ onto a bounded Jordan domain, the boundary curve $\partial F(\mathbb{D})$ is an asymptotically conformal quasicircle if and only if $\log F'$ belongs to the little Bloch space on $\mathbb{D}$. 
Theorems~\ref{AS->B_0} and~\ref{B_0->AS} below provide the half-plane analogues of this result. 
We extend them to the non-compact setting by assuming that the boundary maps $g$ and $g^{-1}$ are uniformly continuous, respectively.

\begin{theorem}\label{AS->B_0}
Under the additional assumption that $g$ is uniformly continuous, if $\Gamma$ is an asymptotically conformal quasicircle, then $\log G'$ belongs to $B_0(\mathbb{H})$, the little Bloch space on the upper half-plane $\mathbb{H}$.
\end{theorem}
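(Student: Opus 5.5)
The plan is to reduce to Pommerenke's disk theorem \cite[Theorem 1]{Po78}, using a family of uniformly normalized local domains. Argue by contradiction. Suppose $\log G'\notin B_0(\mathbb{H})$. Then there are $\eta>0$ and points $w_n=x_n+iy_n$ with $y_n\to0$ and $y_n|(\log G')'(w_n)|\ge\eta$.

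For each $n$, translate so that $x_n=0$. Consider a fixed stadium (or half-disk) domain $S$ over the interval $[-1,1]$: a bounded Jordan domain in $\mathbb{H}$ whose boundary consists of $[-1,1]$ and a smooth arc, e.g.\ a half-disk with rounded corners making $\partial S$ a $C^{1,\alpha}$ curve. Let $S_n=x_n+S$ and $\Omega_n=G(S_n)$. Fix a Riemann map $\varphi\colon\mathbb{D}\to S$ with $\varphi(0)=i/2$, and set $F_n=G\circ(x_n+\varphi)$. The disk Bloch quantity $(1-|z|^2)|(\log F_n')'(z)|$ at $z_n=\varphi^{-1}(iy_n)$ differs from a fixed multiple of $y_n|(\log G')'(w_n)|$ only by terms involving $\varphi''/\varphi'$. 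These terms tend to $0$ as $z_n\to\partial\mathbb{D}$ near the flat part, since $\varphi$ extends analytically across $(-1,1)$ by reflection. So $\log F_n$ has Bloch quantity bounded below by $c\eta$ at points $z_n\to\partial\mathbb{D}$.

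The key step, and the main obstacle, is to show that the curves $\partial\Omega_n$ are \emph{uniformly} asymptotically conformal quasicircles after normalization. We can then apply Pommerenke's theorem to a normal family, or rather use his compactness argument uniformly. The boundary $\partial\Omega_n$ consists of two pieces.

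The first piece is $g([x_n-1,x_n+1])\subset\Gamma$. On $\Gamma$ asymptotic conformality holds uniformly in the Euclidean sense, but a small parameter interval must correspond to a small Euclidean arc. This is exactly where uniform continuity of $g$ enters: $|g(a)-g(b)|\to0$ uniformly as $|a-b|\to0$. Combined with quasisymmetry, small Euclidean chords on this arc correspond to small parameter intervals, via a normalized distortion.

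The second piece is $G$ of the smooth arc of $\partial S_n$. This is an analytic arc lying inside $\Omega$, and the junctions are where it meets $\Gamma$. I would handle this by rescaling: normalize $\Omega_n$ by an affine map $A_n$ with $A_n\circ G(x_n+i/2)=0$ and $|(A_n\circ F_n)'(0)|=1$. Koebe distortion and the Bloch bound of Proposition~\ref{bloch_norm_H:label} then control the image of the interior arc uniformly; the arc is a smooth curve with uniformly bounded geometry away from the endpoints. Near the corner points $g(x_n\pm1)$ one uses that $\Gamma$ is a $K$-quasicircle and the corners are uniform quasiconformal images of right angles. Hence $\partial\Omega_n$ is a uniform quasicircle which is asymptotically conformal near the points of $\Gamma$ where the $z_n$ accumulate.

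Rather than requiring global asymptotic conformality of $\partial\Omega_n$, I would use Pommerenke's local argument. Pass to a subsequence with $z_n\to\zeta\in\partial\mathbb{D}$, where $\zeta$ is an interior point of $\varphi^{-1}((-1,1))$. Then compose with disk automorphisms $\sigma_n$ sending $0$ to $z_n$, and renormalize: $H_n=(F_n\circ\sigma_n-F_n(z_n))/((F_n\circ\sigma_n)'(0))$. By the quasicircle bound, $H_n$ extends to $K$-quasiconformal maps of the plane, so a subsequence converges to a conformal map $H$ whose boundary arises as a limit of blow-ups of arcs of $\Gamma$ at scale $\to0$. Uniform continuity of $g$ guarantees that these scales are Euclidean-small on $\Gamma$. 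Hence asymptotic conformality forces every three-point ratio on the limit curve to equal its straight-line value, so $H(\mathbb{D})$ is a half-plane or disk and $H''(0)/H'(0)$ takes the corresponding extremal value. This contradicts the lower bound $c\eta$ inherited from $(1-|z_n|^2)|(\log F_n')'(z_n)|$, after accounting for the Möbius correction term as in the proof of Proposition~\ref{bloch_norm_H:label}. Verifying that the blow-up limit only sees $\Gamma$, and not the interior arc or the corners, is the delicate point. It follows because $z_n$ approaches the flat part at hyperbolic distance tending to infinity from the rest of $\partial\mathbb{D}$.
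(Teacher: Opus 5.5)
Your route is the paper's in all essentials: a contradiction sequence, translated stadium domains over unit intervals, and a pull-back to $\mathbb{D}$ where the stadium's own pre-Schwarzian is negligible near the flat arc. Uniform continuity of $g$ enters only through Lemma~\ref{uc1}, to make the blow-up scale $d(G(w_n),\Gamma)$ tend to $0$ so that the Euclidean asymptotic conformality of $\Gamma$ applies. Then a Pommerenke blow-up produces a limit map of $\mathbb{D}$ onto a half-plane. The technical differences are these. You normalize at $z_n$ itself and get compactness from uniform quasiconformality. The paper instead uses Carath\'eodory kernel convergence, after showing by quasisymmetry that $\partial\Omega_{I_n}\setminus\Gamma$ misses the disk $B_n$ of radius $n\,d(\zeta_n)$, and normalizes at an auxiliary point $\zeta_n$ with $\tilde\zeta_n=\Phi_{\zeta_n}(r)$. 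Your discussion of uniform asymptotic conformality of $\partial\Omega_n$ at the junctions is unnecessary and can be dropped.

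As you state it, however, the final step does not give a contradiction. With $\sigma_n(z)=e^{i\theta_n}(z+\rho_n)/(1+\rho_n z)$ and $z_n=\rho_n e^{i\theta_n}$,
\[
e^{-i\theta_n}(\log H_n')'(0)=(1-|z_n|^2)(\log F_n')'(z_n)-2\bar z_n .
\]
Every M\"obius limit with $H(0)=0$, $H'(0)=1$ has the form $H(z)=z/(1-az)$, with $|a|<1$ for a disk and $|a|=1$ for a half-plane with pole $1/a$. Then $(\log H')'(0)=2a$, so
\[
(1-|z_n|^2)\,|(\log F_n')'(z_n)|\to|2a+2| ,
\]
and this vanishes only if $a=-1$. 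A disk limit, or a half-plane whose point at infinity corresponds to any boundary point other than $-1$, is fully compatible with your lower bound $c\eta$.

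So the missing step is $H(z)\to\infty$ as $z\to-1$. It does hold, via the common quasisymmetry gauge $\eta_*$ of the maps $F_n$ on $\overline{\mathbb{D}}$ (the same input as the paper's Claim 3). The map $\sigma_n$ sends $\{|\xi+1|<\epsilon\}\cap\overline{\mathbb{D}}$ to points at distance roughly $(1-\rho_n)/\epsilon$ or more from $z_n$. Since $F_n(e^{i\theta_n})\in\partial\Omega_n$ and $|(F_n\circ\sigma_n)'(0)|\le 4d(F_n(z_n),\partial\Omega_n)$, this gives $|H_n|\ge 1/(4\eta_*(C\epsilon))$ there for all large $n$. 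The same estimate is the precise form of your claim that the limit sees only $\Gamma$: $\sigma_n^{-1}$ squeezes the preimage of $\partial\Omega_n\setminus\Gamma$ toward $-1$.

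Relatedly, the uniformly quasiconformal extensions of $H_n$ cannot fix $\infty$. Indeed $|H_n(1/2)|$ stays bounded above and below, while $|H_n(-1)|\to\infty$, so the extensions must be taken on $\widehat{\mathbb{C}}$ with a three-point normalization.

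The paper avoids locating the pole exactly. Claim 3 only shows that the pole $1/b$ is not $1$, which keeps $|2b/(1-br)+2/(1+r)|$ bounded. The lower bound $c/(1-r^2)$ in \eqref{another} then blows up as $r\to1$, giving the contradiction.
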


For the proof of this theorem, as well as for later arguments, we need equivalent characterizations for the uniform continuity of $g$.

\begin{lemma}\label{uc1}
The following conditions are equivalent:
 \begin{enumerate}
 \item
 $\lim_{y \to 0}\, d(G(x+iy),\Gamma)=0$ uniformly for $x\in\mathbb{R}$;
 \item
 $d(G(x+iy),\Gamma)$ is uniformly bounded for all $x\in \mathbb{R}$ and for some $y>0$;
 \item
 $g$ is uniformly continuous.
 \end{enumerate}
\end{lemma}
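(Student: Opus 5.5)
I will prove the cycle (1) $\Rightarrow$ (2) $\Rightarrow$ (3) $\Rightarrow$ (1). The implication (1) $\Rightarrow$ (2) is trivial. The main tools are Proposition~\ref{bloch_norm_H:label}, which controls $G'$ along vertical segments via the Koebe-type estimate $|\log G'(x+iy_1)-\log G'(x+iy_2)|\le 3|\log(y_1/y_2)|$, and the standard distortion comparison $d(G(z),\Gamma)\asymp (\operatorname{Im}z)|G'(z)|$, which holds with absolute constants between $1/4$ and $4$ by Koebe's one-quarter theorem applied to $G$ restricted to the disk of radius $\operatorname{Im}z$ about $z$.

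\textbf{(2) $\Rightarrow$ (3).} Suppose $d(G(x+iy_0),\Gamma)\le M$ for all $x$. Then $y_0|G'(x+iy_0)|\le 4M$ for every $x\in\mathbb{R}$.

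For $0<y\le y_0$, the vertical estimate gives
\[
y|G'(x+iy)| \le y_0|G'(x+iy_0)|\,(y/y_0)^{1-3}.
\]
This exponent is $-2$, which is bad, so the vertical estimate alone is not enough. Instead I use quasisymmetry of $g$ together with the distortion estimate. For $|a-b|=y\le y_0$, I compare $|g(a)-g(b)|$ with $y|G'(x+iy)|$, where $x=(a+b)/2$. Since $\Omega$ is a quasidisk, $G$ extends quasiconformally, and $|g(a)-g(b)|\asymp d(G(x+iy),\Gamma)$ with constants depending only on the quasisymmetry constant. This is the standard fact that the image of a Whitney box sits at distance comparable to the image of its base.

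Next I chain quasisymmetry at scales $y_0, y_0/2, y_0/4,\dots$. The quasisymmetry condition \eqref{eq:qs} with $t=1/2$ gives $|g(x)-g(x+y_0 2^{-k})|\le \eta(1/2)^k|g(x)-g(x+y_0)|$. Since $g$ is quasisymmetric on $\mathbb{R}$ with $\eta(1/2)<1$ (this uses the standard power-type bound $\eta(t)\le Ct^\alpha$ for small $t$), we obtain
\[
|g(a)-g(b)| \lesssim M\,(|a-b|/y_0)^{\alpha} \quad\text{uniformly in } a,b.
\]
This gives uniform continuity of $g$, and in fact uniform Hölder continuity at small scales. I expect this to be the main obstacle: one must justify that quasisymmetric decay is uniform, so that $\eta(t)\to0$ as $t\to0$ with the same function $\eta$ everywhere, and anchor it at the bounded scale $y_0$.

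\textbf{(3) $\Rightarrow$ (1).} Suppose $g$ is uniformly continuous, and let $\omega$ be its modulus of continuity. By the same Whitney-box comparison,
\[
d(G(x+iy),\Gamma)\le C\,|g(x+y)-g(x)|\le C\,\omega(y),
\]
since the image of a Whitney box lies at distance comparable to the image of its base. Because $\omega(y)\to 0$ as $y\to0$, we get (1) uniformly in $x$. Alternatively, one can bound $d(G(x+iy),\Gamma)\le |G(x+iy)-g(x)|$ and use the quasiconformal extension's Hölder-type control near the boundary. The comparability route is cleaner, and it reuses the key lemma from the previous step.
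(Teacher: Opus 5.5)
Once you discard the abandoned Bloch/Koebe detour, your argument is the paper's. Everything rests on the quasisymmetry of the global quasiconformal extension of $G$, used to compare $d(G(x+iy),\Gamma)$ with $|g(x+h)-g(x)|$, and your (3)$\Rightarrow$(1) is exactly the paper's. Two things need fixing.

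First, the ``Whitney-box comparison'' that you cite as standard is the whole content of the lemma, so it should be proved. It takes only two applications of (QS) to the extended $G$:
\begin{itemize}
\item With center $x$ and points $x+h$, $x+iy$, you get $|g(x+h)-g(x)|\le\eta(|h|/y)\,|G(x+iy)-g(x)|$, and the reverse inequality with $\eta(y/|h|)$.
\item With center $x+iy$ and points $x$, $\xi$, where $g(\xi)$ is a point of $\Gamma$ nearest to $G(x+iy)$, the bound $|\xi-(x+iy)|\ge y$ gives $|G(x+iy)-g(x)|\le\eta(1)\,d(G(x+iy),\Gamma)$.
\item Trivially, $d(G(x+iy),\Gamma)\le|G(x+iy)-g(x)|$.
\end{itemize}

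Second, the chaining step in (2)$\Rightarrow$(3) is false as written. The inequality $\eta(1/2)<1$ need not hold, and a power bound $\eta(t)\le Ct^{\alpha}$ does not give it, since $C$ may be large. For the sector map $G(z)=z^{\beta}$ with $\beta<1/3$, the triple $x=-1$, $a=0$, $b=1$ has $|x-a|/|x-b|=1/2$ but $|g(x)-g(a)|/|g(x)-g(b)|=1/(2\sin(\beta\pi/2))>1$. You could chain at a smaller ratio $t_0$ with $\eta(t_0)<1$, but no chaining is needed at all. The first two inequalities above combine directly to $|g(x+h)-g(x)|\le\eta(|h|/y_0)\,\eta(1)\,M$ for every $x\in\mathbb{R}$. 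Then $\eta(t)\to 0$ as $t\to 0$ simply because $\eta$ is a homeomorphism of $[0,\infty)$ fixing $0$. So the ``main obstacle'' you anticipate, uniformity of the decay in $x$, is not an obstacle: a single gauge valid for all triples is part of the definition of quasisymmetry.
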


\begin{proof}
The implication $(1) \Rightarrow (2)$ is obvious. We shall prove that
$(2) \Rightarrow (3)$ and $(3) \Rightarrow (1)$.

Since $G$ extends to a quasiconformal self-homeomorphism of $\mathbb{C}$, it is quasisymmetric (see \cite[Theorem 3.5.3]{AIM}); there exists a homeomorphism $\eta\colon[0,\infty) \to [0,\infty)$ (called the gauge function) such that for any $a, b, x \in \mathbb{C}$, 
 \begin{equation}\label{quasisymmetry}
 \frac{|a-x|}{|b-x|} \leq t \implies \frac{|G(a)-G(x)|}{|G(b)-G(x)|} \leq \eta(t).
 \end{equation}
We apply this property to the points $a=x+h$ and $b=x+iy$ with $x, h \in \mathbb{R}$ and $y>0$. 
Then,
\begin{equation}\label{qs1}
|g(x+h)-g(x)| \leq \eta(|h|/y)|G(x+iy)-g(x)|.
\end{equation}

Moreover, we take $\xi \in \mathbb{R}$ such that $g(\xi) \in \Gamma$ is a nearest point to $G(x+iy)$.
By
\[
\frac{|x-(x+iy)|}{ |\xi-(x+iy)|}\leq 1\implies
\frac{|g(x)-G(x+iy)|}{|g(\xi)-G(x+iy)|} \leq \eta(1),
\]
we have
\begin{equation}\label{d_f}
|G(x+iy)-g(x)| \leq \eta(1)d(G(x+iy),\Gamma).
\end{equation}

Condition (2) states that there is some $y_0>0$ such that $d(G(x+iy_0),\Gamma)$ is bounded by some constant $M>0$.
Then, it follows from \eqref{qs1} and \eqref{d_f} that
\[
|g(x+h)-g(x)| \leq \eta(|h|/y_0)\eta(1) M.
\]
This proves condition (3); $g$ is uniformly continuous.

The remaining implication is proved by reversing above inequalities.
Namely, for \eqref{qs1}, we have
\begin{equation}\label{qs2}
|G(x+iy)-g(x)| \leq \eta(y/|h|)|g(x+h)-g(x)|,
\end{equation}
and for \eqref{d_f}, we have
\begin{equation}\label{trivial}
d(G(x+iy),\Gamma) \leq |G(x+iy)-g(x)|.
\end{equation}
By taking $h=y$, we see that condition (3) implies (1).
\end{proof}

\begin{remark}
The distortion theorem implies that
\begin{equation}\label{yG'}
\frac{1}{2}(\operatorname{Im} \,z)|G'(z)| \leq d(G(z),\Gamma) \leq 2(\operatorname{Im} \,z)|G'(z)|.
\end{equation}
In the case of the unit disk $\mathbb{D}$, see \cite[Corollary 1.4]{Pom}.
\end{remark}

The proof of Theorem \ref{AS->B_0} is obtained by modifying the arguments in \cite{Po78}.

\begin{proof}[Proof of Theorem \ref{AS->B_0}]
For each closed interval $I$ of length $1$ in $\mathbb{R}$, we associate a stadium domain $D_I=\bigcup_{x \in I} B(x+i/2)$ in $\mathbb{H}$, where $B(\cdot)$ denotes the open disk centered at the prescribed point of radius $1/2$. 
The boundary $\partial D_I$ is $C^1$-smooth with non-degenerate Lipschitz continuous tangent vectors with respect to the arc-length parameter. Moreover, $\partial D_I \cap \mathbb{R}=I$.
For each $I$, we fix a normalized conformal homeomorphism $T_I\colon\mathbb{D} \to D_I$ so that $T_I(0)=x_I+i/2$ and $T_I'(0)>0$, where $x_I$ is the midpoint of $I$; all maps $T_I$ differ only by translations.
Let $\tilde{I}=T_I^{-1}(I)$ be the arc in $\partial \mathbb{D}$ corresponding to $I$, which contains $-i$ as the midpoint.

We first note that $\log T'_I$ belongs to $B_0(\mathbb{D})$.
Indeed, $\partial D_I$ is an asymptotically conformal quasicircle by construction and
hence \cite[Theorem 1.1]{Po78} implies the conclusion. 
Let $G_I=G \circ T_I$ for each $I$, which is a conformal homeomorphism of $\mathbb{D}$ onto a bounded Jordan domain $\Omega_I$ in $\Omega$. Since $G$ extends to a quasiconformal self-homeomorphism of $\mathbb{C}$ and it maps the quasidisk $D_I$ onto $\Omega_I$, this is also a quasidisk and $\partial \Omega_I$ is a quasicircle. 
We have that $\partial \Omega_I \cap \Gamma=g(I)=G_I(\tilde{I})$.

Assuming that $\Gamma$ is an asymptotically conformal quasicircle, we show that $\log G' \in B_0(\mathbb{H})$.
Suppose to the contrary that $\log G' \notin B_0(\mathbb{H})$.
Then, there exists a sequence $\{\tilde z_n\}$ in $\mathbb{H}$ and a constant $c>0$ such that $\operatorname{Im} \,\tilde z_n \to 0$ as $n \to \infty$ and 
\begin{equation}\label{ass}
(\operatorname{Im} \,\tilde z_n)|(\log G')'(\tilde z_n)| \geq 2c
\end{equation} 
for all $n$.
Let $I_n$ be the closed interval of $\mathbb{R}$ with $|I_n|=1$ whose midpoint is $\operatorname{Re} \,\tilde z_n$.
We may assume that $\tilde z_n \in D_{I_n}$ and set $\tilde \zeta_n=T_{I_n}^{-1}(\tilde z_n) \in \mathbb{D}$.
The normalization of $T_{I_n}$ implies that $\arg \tilde{\zeta_n}=-\pi/2$ for all $n$ and $|\tilde \zeta_n| \to 1$ as $n \to \infty$.
The chain rule of pre-Schwarzian derivatives yields
\begin{equation}\label{chain}
(\log G'_{I_n})'=(\log G')' \circ T_{I_n} \cdot T_{I_n}'+(\log T'_{I_n})'.
\end{equation}
Then, using the comparability of the hyperbolic density with the reciprocal of the distance to the boundary established by \eqref{yG'}, we obtain
\[
(1-|\tilde \zeta_n|^2)|(\log G'_{I_n})'(\tilde \zeta_n)|\geq d(\tilde z_n,\partial D_{I_n})|(\log G')'(\tilde z_n)|-(1-|\tilde \zeta_n|^2)|(\log T'_{I_n})'(\tilde \zeta_n)|.
\]
Here, 
\[
(1-|\tilde \zeta_n|^2)|(\log T'_{I_n})'(\tilde \zeta_n)| \to 0\quad (n \to \infty)
\]
because $\log T'_{I_n} \in B_0(\mathbb{D})$,
and \eqref{ass} implies
\[
d(\tilde z_n,\partial D_{I_n})|(\log G')'(\tilde z_n)| \geq 2c
\]
for all sufficiently large $n$ because $d(\tilde z_n,\partial D_{I_n})=\operatorname{Im} \,\tilde z_n$ in these cases.
Thus,
\begin{equation}\label{contradiction}
(1-|\tilde \zeta_n|^2)|(\log G'_{I_n})'(\tilde \zeta_n)| \geq c >0
\end{equation}
for all sufficiently large $n$.

For $\zeta=\rho e^{i\theta} \in \mathbb{D}$, let 
\[
\Phi_\zeta(z)=e^{i\theta}\frac{z+\rho}{1+\rho z}, \quad \Phi'_\zeta(z)=e^{i\theta}\frac{1-\rho^2}{(1+\rho z)^2}
\]
be the conformal automorphism of $\mathbb{D}$ sending $0$ to $\zeta$ and its derivative.
For an arbitrarily fixed $r \in [0,1)$, we choose $\zeta_n \in \mathbb{D}$ such that $\tilde \zeta_n=\Phi_{\zeta_n}(r)$. 
This means that $\tilde \zeta_n$ is located at $r$ on the segment between $\zeta_n$ $(r=0)$ and $\zeta_n/|\zeta_n|=-i$ $(r=1)$ when the segment is parametrized conformally by $[0,1]$. In particular, $\arg \zeta_n=-\pi/2$ for all $n$ and $|\zeta_n| \to 1$ as $n \to \infty$.
Since 
\[
|\Phi'_{\zeta_n}(r)|=\frac{1-|\zeta_n|^2}{(1+|\zeta_n|r)^2}=\frac{1-\left(\frac{r+|\zeta_n|}{1+|\zeta_n|r}\right)^2}{1-r^2}=\frac{1-|\tilde \zeta_n|^2}{1-r^2}, 
\]
the inequality \eqref{contradiction} can be transformed into the condition that the sequence $\{\zeta_n\} \subset \mathbb{D}$ satisfies
\begin{equation}\label{another}
|\Phi'_{\zeta_n}(r)||(\log G'_{I_n})'(\Phi_{\zeta_n}(r))| \geq \frac{c}{1-r^2}.
\end{equation}
for any $r \in [0,1)$. 

We set $d(\zeta_n)=d(G_{I_n}(\zeta_n),\partial \Omega_{I_n})$, which is bounded by $d(G_{I_n}(\zeta_n),\Gamma)$.
By the assumption that $g$ is uniformly continuous, Lemma \ref{uc1} implies that $d(\zeta_n)$ tends to $0$ as $n \to \infty$. 
Passing to a subsequence if necessary, we may assume that
$nd(\zeta_n) \to 0$. 
Moreover, after passing a further subsequence, we have that $\partial \Omega_{I_n} \setminus \Gamma$ has no intersection with the closed disk
\[
B_n=\{w \in \mathbb{C} : |w-G_{I_n}(\zeta_n)| \leq nd(\zeta_n)\}.
\]
Indeed, since $G$ extends to a quasiconformal self-homeomorphism of $\mathbb{C}$, it is quasisymmetric
satisfying \eqref{quasisymmetry} with the gauge function $\eta$. Let $z_n=T_{I_n}(\zeta_n)$ and take an arbitrary point $\xi=G^{-1}(w)$ in $\partial D_{I_n} \setminus I_n$. If $|x_n-z_n| \leq t|\xi-z_n|$ then
\[
d(\zeta_n) \leq |G(x_n)-G(z_n)| \leq \eta(t)|G(\xi)-G(z_n)|=\eta(t)|w-G_{I_n}(\zeta_n)|.
\]
We choose $t_n>0$ such that $\eta(t_n)<1/n$. Since $|z_n-x_n| \to 0$ as $n \to \infty$, we may assume that
\[
|z_n-x_n| \leq t_n/2 \leq t_n|\xi-z_n|.
\]
Then, $nd(\zeta_n)<|w-G(x_n)|$ for every $w \in G(\partial D_{I_n} \setminus I_n)=\partial \Omega_{I_n} \setminus \Gamma$.

Let $\Gamma_n$ be the smallest closed subarc of $\Gamma$ that contains $\Gamma \cap B_n$.

Let $w_n$ and $w'_n$ be the endpoints of $\Gamma_n$, which satisfy $|w_n-G_{I_n}(\zeta_n)| = nd(\zeta_n)$
and $|w'_n-G_{I_n}(\zeta_n)| = nd(\zeta_n)$. 
In particular, $|w_n-w'_n| \leq 2nd(\zeta_n)$.
The assumption that $\Gamma$ is an asymptotically conformal quasicircle implies that
\begin{equation}\label{n4}
\max_{w \in \Gamma_n} \frac{|w_n-w|+|w-w'_n|}{|w_n-w'_n|} <1+\frac{1}{n^4}
\end{equation}
for a further subsequence.

Let $\alpha_n=\arg(w_n-w'_n)$ and define an affine transformation of $\mathbb{C}$ by
\[
\Psi_n(w)=\frac{e^{-i\alpha_n}}{d(\zeta_n)}(w-G_{I_n}(\zeta_n)).
\]
For each $n$, this means $|\Psi_n(w_n)|=|\Psi_n(w'_n)|=n$ and $\Psi_n(w_n)-\Psi_n(w'_n) >0$. 
Let $\Psi_n(w_n)=u_n+iv_n$ and $\Psi_n(w'_n)=-u_n+iv_n$ for $u_n>0$. 
We consider the affine image $\Psi_n(\Gamma_n)$, which satisfies the same condition as \eqref{n4}:
\[
\max_{\tilde w \in \Psi_n(\Gamma_n)} \,\frac{|\Psi_n(w_n)-\tilde w|+|\tilde w-\Psi_n(w'_n)|}{|\Psi_n(w_n)-\Psi_n(w'_n)|} 
<1+\frac{1}{n^4}.
\]
Then, by arguments similar to those in \cite[(2.6), (2.7)]{Po78} based on these properties, the following claims hold. 

\medskip
\noindent
{\it Claim 1.} $\Psi_n(\Gamma_n) \subset \{\tilde w \in \mathbb{C} :|\operatorname{Im} \,\tilde w-v_n|<2u_n n^{-2}\}$.

\medskip
\noindent
{\it Claim 2.} $u_n/n \to 1$ and $v_n \to 1$ as $n \to \infty$.

\medskip

We note that Claims 1 and 2 are obtained straightforwardly from the original proof. 
Moreover, the condition $(\partial \Omega_{I_n} \setminus \Gamma) \cap B_n=\emptyset$ implies that 
\begin{equation}\label{claim}
\Psi_n(\partial \Omega_{I_n} \setminus \Gamma_n) \subset \{\tilde w \in \mathbb{C} :|\tilde w|>n\}.
\end{equation}

We consider the composition
\begin{equation}\label{composition}
\tilde G_n(z)=\Psi_n \circ G_{I_n} \circ \Phi_{\zeta_n}(z)=\frac{G_{I_n}(\Phi_{\zeta_n}(z))-G_{I_n}(\zeta_n)}{e^{-i\alpha_n}d(\zeta_n)},
\end{equation}
which satisfies $\tilde G_n(0)=0$. Moreover, since
\[
\tilde G_n'(0)=-i\frac{1-|\zeta_n|^2}{e^{-i\alpha_n}d(\zeta_n)}G_{I_n}'(\zeta_n),
\]
we have $1/4 \leq |\tilde G_n'(0)| \leq 1$ by \eqref{yG'} in the disk case.

By Claims 1 and 2 together with \eqref{claim}, we see that the sequence of domains $\tilde G_n(\mathbb{D})$ converges to the kernel with respect to $0$, which is 
\[
{\mathbb{H}}^*(1)=\{w \in \mathbb{C} : \operatorname{Im} \,w <1\}. 
\]
We may assume that the limit $\beta=\lim_{n \to \infty}\,\arg \tilde G_n'(0)$ exists by passing to a subsequence.
Then, by the Carath\'eodory kernel convergence theorem, $\tilde G_n$ converges locally uniformly to a conformal mapping $\tilde G$ of $\mathbb{D}$ onto ${\mathbb{H}}^*(1)$. 
In addition, $\tilde G$ satisfies $\tilde G(0)=0$ and $\arg \tilde G'(0)=\beta$. 

For $b=ie^{i\beta}$, $\tilde G$ is the M\"obius transformation precisely determined as
\[
\tilde G(z)=-\frac{2ibz}{1-bz}.
\]
This function satisfies the following property:

\medskip
\noindent
{\it Claim 3.} There exists a constant $K>0$ such that $|\tilde G_n(r)| \leq K$ for any $n \in \mathbb N$,
and hence
\[
|\tilde G(r)|=\left|\frac{2br}{1-br}\right| \leq K
\]
for any $r \in [0,1)$. 
\medskip

Assuming this claim for the moment, we finish the proof by deriving a contradiction.
The chain rule of pre-Schwarzian derivatives for $\tilde G_n=\Psi_n \circ G_{I_n} \circ \Phi_{\zeta_n}$ gives
\[
(\log \tilde G'_n)'(z)=(\log G'_{I_n})'(\Phi_{\zeta_n}(z))\Phi'_{\zeta_n}(z)+(\log \Phi'_{\zeta_n})'(z),
\]
and from this it follows that
\begin{equation}\label{real}
   \begin{aligned}
   |\Phi'_{\zeta_n}(r)||(\log G'_{I_n})'(\Phi_{\zeta_n}(r))|&=
   |(\log \tilde G'_n)'(r)-(\log \Phi'_{\zeta_n})'(r)|\\
   &=\left|(\log \tilde G'_n)'(r)+\frac{2|\zeta_n|}{1+|\zeta_n| r}\right| 
   \end{aligned}
\end{equation}
for any $r \in [0,1)$. Here, \eqref{another} claims that the left-hand side term of \eqref{real} is bounded below by $c/(1-r^2)$.
On the other hand, the limit of the right side term is
\[
\lim_{n \to \infty}\,\left|(\log\tilde G'_n)'(r)+\frac{2|\zeta_n|}{1+|\zeta_n| r}\right|
=\left|(\log \tilde G')'(r)+\frac{2}{1+r}\right|=\left|\frac{2b}{1-br}+\frac{2}{1+r}\right|.
\]
This quantity is bounded uniformly for $r \in [0,1)$ since $b=ie^{i\beta}$ is away from $1$ on the unit circle by Claim 3.
However, this contradicts the existence of the lower bound $c/(1-r^2)$.

Finally, we verify Claim 3. 
By the Kellogg--Warschawski theorem (see \cite[Theorem 3.6]{Pom}), $T_{I_n}$ extends to a $C^1$-diffeomorphism of $\overline{\mathbb{D}}$ onto $\overline{D_{I_n}}$, and in particular $|T_{I_n}'|$ is bounded above and bounded away from $0$. 
Namely, $T_{I_n}$ is bi-Lipschitz 
with a uniform constant $\kappa \geq 1$ satisfying $\kappa^{-1} \leq |T'_{I_n}(z)| \leq \kappa$ for $z \in \overline{\mathbb{D}}$. 
The conformal mapping $G\colon\mathbb{H} \to \Omega$ extends to a quasisymmetric homeomorphism of $\mathbb{C}$ with the gauge function $\eta(t)$. 
Hence, $G_{I_n}\colon\overline{\mathbb{D}} \to \overline{\Omega_{I_n}}$ is
quasisymmetric with the gauge function $\eta(\kappa^2t)$. 

We apply the quasisymmetry of $G_{I_n}$ to $\zeta_n$, $\tilde \zeta_n$, and $-i$ in $\overline{\mathbb{D}}$.
Since $|\tilde \zeta_n-\zeta_n| \leq |-i-\zeta_n|$, we have
\begin{equation}\label{kappa1}
|G_{I_n}(\tilde \zeta_n)-G_{I_n}(\zeta_n)| \leq \eta(\kappa^2)|G_{I_n}(-i)-G_{I_n}(\zeta_n)|.
\end{equation}
Take $e^{i\theta_n} \in \partial \mathbb{D}$ such that $G_{I_n}(e^{i\theta_n})$ is a nearest point on $\partial \Omega_{I_n}$ to $G_{I_n}(\zeta_n)$. 
This satisfies $|G_{I_n}(e^{i\theta_n})-G_{I_n}(\zeta_n)| = d(\zeta_n)$. 
Then, by the quasisymmetry of $G_{I_n}$ applied to $\zeta_n$, $e^{i\theta_n}$, and $-i$, we have
\begin{equation}\label{kappa2}
|G_{I_n}(-i)-G_{I_n}(\zeta_n)|\leq \eta(\kappa^2) |G_{I_n}(e^{i\theta_n})-G_{I_n}(\zeta_n)|.
\end{equation}

By \eqref{kappa1} and \eqref{kappa2}, we obtain that
\[
|G_{I_n}(\tilde \zeta_n)-G_{I_n}(\zeta_n)| \leq \eta(\kappa^2)^2 d(\zeta_n).
\]
Since $\tilde \zeta_n=\Phi_{\zeta_n}(r)$, the definition \eqref{composition} of $\tilde G_n$ gives
\[
|\tilde G_n(r)|=\left|\frac{G_{I_n}(\Phi_{\zeta_n}(r))-G_{I_n}(\zeta_n)}{d(\zeta_n)}\right| \leq \eta(\kappa^2)^2.
\]
By taking $K=\eta(\kappa^2)^2$, Claim 3 is proved.
\end{proof}

The preceding argument also yields a local version of this theorem, which does not require the uniform continuity of $g$.

\begin{corollary}\label{localization}
If $\Gamma$ is an asymptotically conformal quasicircle then $\log G'$ belongs to $B_0^{\mathrm{loc}}(\mathbb{H})$.
\end{corollary}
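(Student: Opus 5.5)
The plan is to rerun the contradiction argument from the proof of Theorem~\ref{AS->B_0}, with all the points now confined to a fixed compact region, so that uniform continuity of $g$ is only needed on compact sets, where it holds automatically. Fix a bounded interval $I\subset\mathbb{R}$ and suppose the locally little Bloch condition fails on $I$. Then there are $\tilde z_n\in\mathbb{H}$ with $\operatorname{Re}\tilde z_n\in I$, $\operatorname{Im}\tilde z_n\to0$, and $(\operatorname{Im}\tilde z_n)|(\log G')'(\tilde z_n)|\geq 2c$. Let $I_n$ be the unit-length interval centered at $\operatorname{Re}\tilde z_n$ and let $T_{I_n}$ be the stadium map as before. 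All the constructions ($\tilde\zeta_n$, $\zeta_n$, $G_{I_n}$, $\Omega_{I_n}$) are made exactly as before. The estimate \eqref{contradiction}, and hence \eqref{another}, depend only on $\log T_{I_n}'\in B_0(\mathbb{D})$ (uniformly, since the $T_{I_n}$ differ by translations) and on $d(\tilde z_n,\partial D_{I_n})=\operatorname{Im}\tilde z_n$. So these steps carry over verbatim.

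The only place where uniform continuity of $g$ was used is the claim that $d(\zeta_n)=d(G_{I_n}(\zeta_n),\partial\Omega_{I_n})\to0$, which was obtained through Lemma~\ref{uc1}. Here I will replace it by a local argument. The points $z_n=T_{I_n}(\zeta_n)$ lie in $D_{I_n}$ near the segment from $\tilde z_n$ to $x_n=\operatorname{Re}\tilde z_n$, so they stay in a fixed compact set $K$ containing $\overline{I}+[-1,1]$ up to height $1$. Their imaginary parts tend to $0$, because $|\zeta_n|\to1$ along $\arg=-\pi/2$ and $T_{I_n}$ is bi-Lipschitz uniformly. Since $G$ extends to a homeomorphism of $\mathbb{C}$, it is uniformly continuous on $K$, and therefore
\[
d(\zeta_n)\le d(G(z_n),\Gamma)\le |G(z_n)-g(\operatorname{Re}z_n)|\to0 .
\]
The same estimate follows from \eqref{qs2} with $h=\operatorname{Im}z_n$, using only that $g$ is uniformly continuous on the compact interval $\overline{I}+[-1,1]$.

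With $d(\zeta_n)\to0$ in hand, the rest of the proof goes through unchanged. The passage to subsequences with $nd(\zeta_n)\to0$ and $B_n\cap(\partial\Omega_{I_n}\setminus\Gamma)=\emptyset$ uses only the global quasisymmetry of $G$ and $|z_n-x_n|\to0$. The asymptotic conformality estimate \eqref{n4} uses the global hypothesis on $\Gamma$ together with $|w_n-w_n'|\le 2nd(\zeta_n)\to0$. Claims 1--3, the kernel convergence to $\mathbb{H}^*(1)$, and the final contradiction between the lower bound $c/(1-r^2)$ and the bounded limit in \eqref{real} involve only uniform constants ($\kappa$, $\eta$). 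This yields $(\operatorname{Im}z)|(\log G')'(z)|\to0$ uniformly for $\operatorname{Re}z\in I$, which is the statement of the corollary.

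The main point to check is the reduction of $d(\zeta_n)\to0$ to local uniform continuity, namely that the relevant points $z_n$ really stay in a compact set; this holds because $\operatorname{Re}\tilde z_n\in I$ and $|I_n|=1$.
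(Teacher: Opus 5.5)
Your proposal is correct and is essentially the paper's argument, which the paper states in one line: rerun the proof of Theorem~\ref{AS->B_0} over a fixed bounded interval, since uniform continuity of $g$ enters only through $d(\zeta_n)\to0$, and you correctly recover this from continuity of the extended $G$ on a compact set. A minor slip that does not affect the argument: because $|\zeta_n|<|\tilde\zeta_n|$, the point $z_n=T_{I_n}(\zeta_n)$ lies on the vertical segment from $x_n+i/2$ down to $\tilde z_n$, not between $\tilde z_n$ and $x_n$; all you need is $\operatorname{Re}z_n=x_n\in I$ and $\operatorname{Im}z_n\to0$, which you justify correctly.
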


The study of subarcs of unbounded asymptotically conformal quasicircles can be reduced to the bounded case by localization. 
Formally, a subarc of an unbounded quasicircle can be realized as a subarc of a bounded quasicircle as shown in the following result:

\begin{proposition}\label{localize_Gamma:label}
Suppose that $G \colon \mathbb{H} \to \Omega$ is a conformal mapping such that $\log G' \in B_0^{\mathrm{loc}}(\mathbb{H})$. 
Then for every interval $I \subset \mathbb{R}$, there exists a conformal mapping $T_I\colon \mathbb{D}\to\mathbb{H}$ such that
\[
\log (G \circ T_I)' \in B_0(\mathbb{D}).
\]
\end{proposition}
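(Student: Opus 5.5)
Given a bounded interval $I\subset\mathbb{R}$, we will take $T_I$ to be a conformal map of $\mathbb{D}$ onto a stadium-type domain $D_I\subset\mathbb{H}$ sitting over $I$. Concretely, enlarge $I$ to a closed interval $J\supset I$ and let $D_J=\bigcup_{x\in J}B(x+i r)$ with $r=|J|/2$, exactly as in the proof of Theorem~\ref{AS->B_0}, but rescaled. Then $\partial D_J$ is a $C^{1}$ curve whose unit tangent is Lipschitz in arc length, and $\partial D_J\cap\mathbb{R}=J$. By the Kellogg--Warschawski theorem, the normalized Riemann map $T_I\colon\mathbb{D}\to D_J$ extends to a $C^1$-diffeomorphism of $\overline{\mathbb{D}}$, with $\kappa^{-1}\le|T_I'|\le\kappa$ on $\overline{\mathbb{D}}$. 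Moreover, $\log T_I'\in B_0(\mathbb{D})$ by \cite[Theorem 1.1]{Po78}, since $\partial D_J$ is an asymptotically conformal quasicircle. Strictly, $T_I$ maps into $\mathbb{H}$ rather than onto it, which is all the argument uses; I read the statement's ``$T_I\colon\mathbb{D}\to\mathbb{H}$'' in that sense.

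The core step is the chain rule already used in \eqref{chain}:
\[
(1-|\zeta|^2)(\log (G\circ T_I)')'(\zeta)=(1-|\zeta|^2)T_I'(\zeta)\,(\log G')'(T_I(\zeta))+(1-|\zeta|^2)(\log T_I')'(\zeta).
\]
The second term tends to $0$ as $|\zeta|\to1$ because $\log T_I'\in B_0(\mathbb{D})$. For the first term, write $z=T_I(\zeta)$. Koebe distortion for $T_I$ gives $(1-|\zeta|^2)|T_I'(\zeta)|\le 4\,d(z,\partial D_J)\le 4\operatorname{Im}z$, so the first term is bounded by $4(\operatorname{Im}z)|(\log G')'(z)|$. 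As $|\zeta|\to1$, the point $z$ tends to $\partial D_J$, and we split into two cases.

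\emph{Case 1: $z$ approaches $J$.} Then $\operatorname{Im}z\to0$ with $\operatorname{Re}z$ confined to the bounded interval $J$, so $(\operatorname{Im}z)|(\log G')'(z)|\to0$ by the hypothesis $\log G'\in B_0^{\mathrm{loc}}(\mathbb{H})$.

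\emph{Case 2: $z$ approaches $\partial D_J\setminus J$.} This part of the boundary is the upper arc together with the endpoint regions. Here the sharper bound $(1-|\zeta|^2)|T_I'(\zeta)|\le 4\,d(z,\partial D_J)$ applies, and $d(z,\partial D_J)\to0$, while $|(\log G')'(z)|$ stays bounded because $z$ stays in a compact subset of $\mathbb{H}$. Indeed, Proposition~\ref{bloch_norm_H:label} gives $|(\log G')'(z)|\le 3/\operatorname{Im}z$, and $\operatorname{Im}z$ is bounded below away from $J$. Hence this term also tends to $0$.

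To make the limit uniform and to avoid splitting $\partial\mathbb{D}$ awkwardly, I would argue by sequences. Take $\zeta_n$ with $|\zeta_n|\to1$, pass to a subsequence so that $z_n=T_I(\zeta_n)\to z_*\in\partial D_J$, and treat $z_*\in J$ and $z_*\notin J$ separately. The case $z_*\in J$ is the main point requiring care. Near the endpoints of $J$, points of $D_J$ close to $z_*$ have $\operatorname{Im}z_n\to0$ with $\operatorname{Re}z_n$ in a slightly enlarged bounded interval. The locally uniform hypothesis on such an interval still handles this, and the sequence version needs nothing more. Combining the two cases shows that every sequence has a subsequence along which the left side tends to $0$, which gives $\log(G\circ T_I)'\in B_0(\mathbb{D})$.
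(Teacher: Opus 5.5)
Your proposal is correct and follows essentially the same route as the paper. Both use a stadium domain over (an enlargement of) $I$ with $\log T_I'\in B_0(\mathbb{D})$, then the chain rule together with the Koebe comparison $(1-|\zeta|^2)|T_I'(\zeta)|\le 4\,d(T_I(\zeta),\partial D_J)\le 4\operatorname{Im}T_I(\zeta)$, then the hypothesis $\log G'\in B_0^{\mathrm{loc}}(\mathbb{H})$ near the real segment, and the bound $|(\log G')'(w)|\le 3/\operatorname{Im}w$ from Proposition~\ref{bloch_norm_H:label} elsewhere. The only difference is organizational: the paper replaces your subsequence/limit-point case split with an explicit threshold, using $B_0^{\mathrm{loc}}$ when $\operatorname{Im}T_I(z)<\delta_1$ and the Bloch bound otherwise, so that $d(T_I(z),\partial D_I)<\delta_1\varepsilon/3$ suffices; your subsequence argument handles the endpoints of $J$ equally well.
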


\begin{proof}
Following the approach in the proof of Theorem~\ref{AS->B_0}, for each closed interval $I\subset\mathbb{R}$, we associate the stadium domain $D_I=\bigcup_{x \in I} B(x+i/2)$ in $\mathbb{H}$ as before. 
For each $I$, we fix a normalized conformal homeomorphism $T_I\colon\mathbb{D} \to D_I$ so that $T_I(0)=(x_I,i/2)$ and $T_I'(0)>0$, where $x_I$ is the midpoint of $I$. 
Let $\tilde{I}=T_I^{-1}(I)$ be the arc in $\partial \mathbb{D}$ corresponding to $I$, which contains $-i$ as the midpoint. 
Moreover, $\log T_I'$ belongs to $B_0(\mathbb{D})$ and $T_I$ is bi-Lipschitz.

The chain rule~\eqref{chain} for the pre-Schwarzian derivative together with the comparability of the hyperbolic density gives
\begin{equation}\label{B0condition}
\begin{aligned}
&\quad (1-|z|^2)|(\log (G\circ T_I)')'(z)| \\
&\leq 2d(T_I(z),\partial D_I) |(\log G')' (T_I(z))|+(1-|z|^2)|(\log T_I')'(z)|.
\end{aligned}
\end{equation}
Since $\log T_I' \in B_0(\mathbb{D})$, it suffices to control the first term on the right-hand side.

Fix any $\varepsilon>0$. By the assumption $\log G' \in B_0^{\mathrm{loc}}(\mathbb{H})$, there exists some $\delta_1>0$ such that
if $\operatorname{Im}w <\delta_1$ then
\[
 \sup_{\operatorname{Re}w\in J}\, (\operatorname{Im}w) |(\log G')'(w)|<\varepsilon,
\]
where $J$ is the interval with the same center as $I$ and length $|J| = |I| + 1$. 
Since $d(T_I(z),\partial D_I) \leq \operatorname{Im} T_I(z)$, it follows that whenever $\operatorname{Im} T_I(z) < \delta_1$,
\begin{equation}\label{epsilon_logGF}
d(T_I(z),\partial D_I)\, |(\log G')'(T_I(z))| < \varepsilon.
\end{equation}

On the other hand, by Proposition~\ref{bloch_norm_H:label}, for all $\zeta \in \mathbb{H}$ with $\operatorname{Im} \zeta \geq \delta_1$, we have $|(\log G')' (\zeta)| \leq 3/\delta_1$.

Hence, if $d(T_I(z),\partial D_I) < \delta \coloneqq \delta_1 \varepsilon / 3$, then \eqref{epsilon_logGF} holds. 
Combining this with \eqref{B0condition} and the fact that $\log T_I'\in B_0(\mathbb D)$, we obtain 
\[
(1-|z|^2)\big|(\log (G\circ T_I)')'(z)\big| \to 0\quad \text{as } d(T_I(z),\partial D_I) \to 0.
\]

Finally, by the Koebe distortion theorem and the bi-Lipschitz property of $T_I$, the condition $d(T_I(z),\partial D_I) \to 0$ is equivalent to $|z| \to 1$. 
Therefore,
\[
(1-|z|^2)\big|(\log (G\circ T_I)')'(z)\big| \to 0 \quad \text{as } |z| \to 1,
\]
which shows that $\log (G \circ T_I)' \in B_0(\mathbb{D})$.
\end{proof}

\section{Little Bloch implies asymptotic conformality}\label{sec:little_bloch_implies_asymptotic_conformality}

The converse of Theorem \ref{AS->B_0} is also obtained under the additional assumption that $g^{-1}$ is uniformly continuous.
The proof follows a modification of \cite{Po78} after establishing a lemma complementary to Lemma~\ref{uc1}.

\begin{theorem}\label{B_0->AS}
Under the assumption that $g^{-1}$ is uniformly continuous, if $\log G' \in B_0(\mathbb{H})$ then $\Gamma$ is an asymptotically conformal quasicircle. 
\end{theorem}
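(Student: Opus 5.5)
The plan is to follow Pommerenke's Visser--Ostrowski argument from \cite{Po78}, with the uniform continuity of $g^{-1}$ used exactly where compactness of $\partial\mathbb{D}$ was used there. First I would prove a lemma complementary to Lemma~\ref{uc1}: $g^{-1}$ is uniformly continuous if and only if $\operatorname{Im} z\to 0$ uniformly as $d(G(z),\Gamma)\to 0$. The proof reverses the inequalities \eqref{qs1}--\eqref{trivial}, now applied to the quasisymmetric extension of $G^{-1}$. Combined with \eqref{yG'}, this lemma converts the hypothesis into the following: for $a,b\in\Gamma$ with $|a-b|\to0$, the preimages $g^{-1}(a),g^{-1}(b)$ become uniformly close. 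Hence the hyperbolic geodesic in $\mathbb{H}$ joining them, together with its $G$-image, lies at uniformly small height $\operatorname{Im} z$. On that region the little Bloch hypothesis gives $(\operatorname{Im} z)|(\log G')'(z)|<\varepsilon$.

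The second step localizes this near a chord. Take $a=g(s)$ and $b=g(s')$ with $|s-s'|=h$ small. For $z$ in the box $\{x+iy: x\in[s-Ch,s'+Ch],\ y\le Ch\}$, integrate $(\log G')'$ along hyperbolic segments. Since the little Bloch bound has $\operatorname{Im} z$ in the denominator, I compare $G'(z)$ only at bounded hyperbolic distance from the reference point $z_0=(s+s')/2+ih$. This gives $|\log G'(z)-\log G'(z_0)|\le\varepsilon\cdot(\text{hyperbolic distance})$. Near the real segment $[s,s']$ that distance is unbounded, so instead I use the standard Visser--Ostrowski consequence of the $B_0$ condition, as in \cite{Po78}: the limit
\[
\lim \frac{(G(x+iy)-G(x))}{iy\,G'(x+iy)}=1
\]
holds uniformly, together with the analogous statement for difference quotients along $\mathbb{R}$, namely
\[
\frac{g(x+h)-g(x)}{h\,G'(x+ih)}\to1
\]
uniformly in $x$ as $h\to0$. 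To derive these, I integrate $G'$ along the three sides of the rectangle $[x,x+h]\times[0,h]$. On the top side $G'$ is nearly constant by the little Bloch bound at height $h$. On the vertical sides I compare $G'(x+iy)$ with $G'(x+ih)$ using $|(\log G')'|\le\varepsilon/y$, which gives the factor $(y/h)^{\pm\varepsilon}$, and these are integrable with contributions $1+O(\varepsilon)$. Everything here is uniform in $x$ because $B_0(\mathbb{H})$ is uniform in $\operatorname{Re} z$.

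The third step is to conclude. For $w=g(t)$ with $t\in[s,s']$, the same rectangle estimate gives $g(t)-g(s)\approx (t-s)G'(z_0)$ and $g(s')-g(t)\approx(s'-t)G'(z_0)$ up to errors $O(\varepsilon)\,h|G'(z_0)|$. Hence $|a-w|+|w-b|\le(1+O(\varepsilon))|a-b|$. Asymptotic conformality \eqref{AC} requires this whenever $|a-b|\to0$. Here uniform continuity of $g^{-1}$ enters again: it guarantees that the subarc $\wideparen{ab}$ is $g([s,s'])$ with $|s-s'|$ small. For an unbounded curve we must also exclude the complementary arc through $\infty$, which is automatic since that arc has infinite diameter. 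The quasicircle property is given.

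I expect the main obstacle to be the second step. The difficulty is making the rectangle integration yield errors that are $o(1)$ relative to $h|G'(z_0)|$ rather than merely bounded. The $(y/h)^{\pm\varepsilon}$ growth of $G'$ near the boundary must be integrated carefully; in Pommerenke this is exactly the role of the Visser--Ostrowski quotient. I would handle it by choosing the height cutoff $\delta(\varepsilon)$ from the $B_0$ condition first, and only then taking $h<\delta$.
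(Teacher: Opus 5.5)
Your proposal is correct, but the argument that actually does the work differs from the paper's proof of this theorem. The paper localizes to the stadium domains $D_I$ over unit intervals and transfers the hypothesis to $\log G_I'\in B_0(\mathbb{D})$, uniformly in $I$, via $G_I=G\circ T_I$. It then runs Pommerenke's Visser--Ostrowski quotient and Taylor-expansion argument \eqref{VO}--\eqref{ij} on the disk, using uniform continuity of $g^{-1}$ only at the last line.

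Your Steps 2--3 stay in $\mathbb{H}$ and integrate $G'$ around rectangles $[s,t]\times(0,h]$ with a fixed reference point $z_0$ at height $h=|s-s'|$. This is exactly the technique of the paper's Theorem~\ref{import}, where it is used to prove that $g$ is asymptotically symmetric. The paper then notes, in the remark after Theorem~\ref{import}, that the present theorem follows by combining this with Theorem~\ref{BY2}. Your Step 3 removes that detour. Using the same $z_0$ for the three increments $g(t)-g(s)$, $g(s')-g(t)$, $g(s')-g(s)$ gives the three-point inequality directly, with ratio at most $(1+2C\varepsilon)/(1-C\varepsilon)$.

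The obstacle you anticipate does not arise. The main terms on the two vertical sides cancel exactly. The bound $|G'(z)/G'(z_0)-1|\le e^{\varepsilon/2}(h/y)^{\varepsilon}-1$ integrates over $y\in(0,h]$ to $O(\varepsilon h)$, as in \eqref{eq:error_estimate}. So the errors are already $O(\varepsilon)\,h|G'(z_0)|$ once $h$ is below the $B_0$ threshold.

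What your route buys:
\begin{itemize}
\item It is explicit and self-contained, and uniformity in $\operatorname{Re}z$ is immediate from the definition of $B_0(\mathbb{H})$. In the disk-localized argument, by contrast, uniformity in $I$ has to be tracked through Pommerenke's estimates.
\item It shows that uniform continuity of $g^{-1}$ is needed only to pass from $|a-b|$ small to $|s-s'|$ small. Without that assumption, your argument already gives asymptotic conformality relative to $g$, i.e.\ $(1)\Rightarrow(2)$ of Theorem~\ref{thm:intro-little-bloch}, directly.
\end{itemize}
The paper's route has the advantage of reusing Pommerenke's disk machinery, which it needs again in the $\mathrm{VMOA}$ section.

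Two parts of your write-up are unnecessary. Step 1 (your analogue of Lemma~\ref{uc2} and the remark about hyperbolic geodesics) is superfluous: only the definition of uniform continuity of $g^{-1}$ is used. Likewise, the vertical Visser--Ostrowski limit $\frac{G(x+iy)-G(x)}{iyG'(x+iy)}\to1$ is never needed.
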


\begin{lemma}\label{uc2}
The following conditions are equivalent:
   \begin{enumerate}
   \item
   For any $\varepsilon>0$, there exists some $\delta>0$ such that
   if $y \geq \varepsilon$ then $d(G(x+iy),\Gamma) \geq \delta$;
   \item
   $d(G(x+iy),\Gamma)$ is uniformly bounded away from $0$ for some $y>0$;
   \item
   $g^{-1}$ is uniformly continuous.
   \end{enumerate}
\end{lemma}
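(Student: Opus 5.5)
The proof will mirror that of Lemma~\ref{uc1}, using the quasisymmetry \eqref{quasisymmetry} of the quasiconformal extension of $G$ together with the two comparisons \eqref{d_f} and \eqref{trivial}. These give
\[
\eta(1)^{-1}|G(x+iy)-g(x)|\le d(G(x+iy),\Gamma)\le |G(x+iy)-g(x)|.
\]
Thus $d(G(x+iy),\Gamma)$ may be replaced throughout by $|G(x+iy)-g(x)|$, up to the fixed factor $\eta(1)$. The implication $(1)\Rightarrow(2)$ is immediate, so it remains to prove $(2)\Rightarrow(3)$ and $(3)\Rightarrow(1)$.

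For $(2)\Rightarrow(3)$, suppose that $d(G(x+iy_0),\Gamma)\ge m>0$ for all $x$. Then $|G(x+iy_0)-g(x)|\ge m$ by \eqref{trivial}. Apply \eqref{qs2} with $y=y_0$, that is, quasisymmetry at the base point $x$ with the points $x+iy_0$ and $x+h$:
\[
m\le |G(x+iy_0)-g(x)|\le \eta(y_0/|h|)\,|g(x+h)-g(x)|.
\]
Hence $|g(x+h)-g(x)|\ge m/\eta(y_0/|h|)$. This bound depends only on $|h|$ and tends to $\infty$ as $|h|\to\infty$. Now let $\varepsilon>0$. Choose $\delta>0$ with $m/\eta(y_0/\varepsilon)>\delta$. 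By monotonicity of $\eta$, $|h|\ge\varepsilon$ then forces $|g(x+h)-g(x)|>\delta$. Contrapositively, $|g(a)-g(b)|\le\delta$ implies $|a-b|<\varepsilon$, which is exactly the uniform continuity of $g^{-1}$.

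For $(3)\Rightarrow(1)$, let $\varepsilon>0$. Uniform continuity of $g^{-1}$ gives $\delta_0>0$ such that $|g(x+\varepsilon)-g(x)|\ge\delta_0$ for all $x$. Let $y\ge\varepsilon$. Use \eqref{qs1} with $h=\varepsilon$, which reads $|g(x+\varepsilon)-g(x)|\le\eta(\varepsilon/y)|G(x+iy)-g(x)|\le \eta(1)|G(x+iy)-g(x)|$. Combining this with \eqref{d_f},
\[
d(G(x+iy),\Gamma)\ge \eta(1)^{-1}|G(x+iy)-g(x)|\ge \eta(1)^{-2}\delta_0.
\]
So $\delta=\delta_0/\eta(1)^2$ works, uniformly in $x$ and in $y\ge\varepsilon$.

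The main point needing care is the direction of monotonicity in each quasisymmetry application. In particular, $(3)\Rightarrow(1)$ must be uniform over all $y\ge\varepsilon$, not just over a fixed height. This is handled by choosing the horizontal increment $h=\varepsilon\le y$, so that the ratio $\varepsilon/y\le1$ is bounded and $\eta(1)$ controls it.
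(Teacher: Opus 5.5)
Your proposal is correct and follows essentially the same route as the paper: $(2)\Rightarrow(3)$ via \eqref{trivial} and \eqref{qs2} followed by the same contrapositive, and $(3)\Rightarrow(1)$ by reversing the argument with \eqref{qs1} and \eqref{d_f}. Your choice $h=\varepsilon\le y$, which gives the bound $\delta=\delta_0/\eta(1)^2$ uniformly in $x$ and in $y\ge\varepsilon$, simply makes explicit the step the paper leaves as a one-line sketch.
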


\begin{proof}
It suffices to prove that $(2) \Rightarrow (3)$ and $(3) \Rightarrow (1)$ since $(1) \Rightarrow (2)$ is obvious. 

As in the proof of Lemma \ref{uc1}, we apply the inequalities \eqref{qs2} and \eqref{trivial} to deduce that 
\[
|g(x+h)-g(x)| \geq \frac{1}{\eta(y/|h|)}|G(x+iy)-g(x)| \geq \frac{1}{\eta(y/|h|)}d(G(x+iy),\Gamma).
\]
Condition (2) says that there is some $y_0>0$ such that $d(G(x+iy_0),\Gamma)$ is bounded below by some constant $m>0$.
Denote $g(x+h)=\xi_1$ and $g(x)=\xi_2$, we have that if $|h| \geq \varepsilon$, then
\[
|\xi_1-\xi_2| \geq \frac{m}{\eta(y_0/|h|)} \geq \frac{m}{\eta(y_0/\varepsilon)}.
\]
The contraposition claims that for any $\varepsilon>0$ if $|\xi_1-\xi_2| < \frac{m}{\eta(y_0/\varepsilon)}$ 
then $|g^{-1}(\xi_1)-g^{-1}(\xi_2)|<\varepsilon$.
This proves condition (3).

The implication $(3)\Rightarrow(1)$ follows by reversing the above argument and applying \eqref{qs1} and \eqref{d_f}.
\end{proof}

\begin{remark}
For condition (1) above, by replacing the constant $\delta$ with $\delta(\varepsilon)=\min\{\delta,\varepsilon\}$, we may assume that $\delta(\varepsilon) \to 0$ as $\varepsilon \to 0$.
\end{remark}

\begin{proof}[Proof of Theorem \ref{B_0->AS}]
Assume that $\log G' \in B_0(\mathbb{H})$. As in the proof of Theorem \ref{AS->B_0}, for each closed interval $I \subset \mathbb{R}$ of length $1$, we consider $T_I\colon \mathbb{D} \to D_I \subset \mathbb{H}$ and $G_I=G \circ T_I$.
The same argument as in Proposition~\ref{localize_Gamma:label} yields that $\log G'_I \in B_0(\mathbb{D})$.

Moreover, this vanishing condition is uniform in $I$. 
Namely, for any $\tilde \varepsilon >0$ and any $I \subset \mathbb{R}$ with $|I|=1$, there exists some $\tilde \delta>0$ such that if $1-|z|<\tilde \delta$ then $(1-|z|^2)|(\log G'_I)'(z)|<\tilde \varepsilon$.

Under this condition, as shown in the proof of the implication i) $\Rightarrow$ ii) of \cite[Theorem 1]{Po78}, we obtain the following:
For any $\varepsilon >0$, any $I \subset \mathbb{R}$ with $|I|=1$, and any $z \in \overline{\mathbb{D}}$, there exists some $\delta>0$ such that if $1-|\zeta|<\delta$ and $|z-\zeta| \leq 2(1-|\zeta|)$ then the Visser--Ostrowski quotient (see \cite[Section 11.3]{Pom} for more information) satisfies
\begin{equation}\label{VO}
\left|\frac{G_I(z)-G_I(\zeta)}{(z-\zeta)G'_I(\zeta)}-1 \right|<\varepsilon.
\end{equation}
Here, by the linear approximation of $G_I(z)$ at $\zeta$, we have
\begin{equation}\label{linear}
G_I(z)=G_I(\zeta)+(z-\zeta)G'_I(\zeta)(1+R_I^\zeta(z)),
\end{equation}
where $R_I^\zeta(z)$ is the remainder term tending to $0$ as $z \to \zeta$. 
By \eqref{VO}, $R_I^\zeta(z) \to 0$ uniformly in $|z-\zeta| \leq 2(1-|\zeta|)$ as $|\zeta| \to 1$, independently of $I$. The Taylor expansion of $R_I^\zeta(z)$ at $\zeta$ is given by
\begin{equation}\label{Taylor}
R_I^\zeta(z)=\frac{1}{2}\frac{G''_I(\zeta)}{G'_I(\zeta)}(z-\zeta)+\frac{1}{6}\frac{G'''_I(\zeta)}{G'_I(\zeta)}(z-\zeta)^2+\cdots.
\end{equation}

Take $w_1$ and $w_2$ on $\Gamma$. We may assume that there exists some closed interval $I \subset \mathbb{R}$
with $|I|=1$ such that $w_1, w_2 \in g(I)$.
Take $z_1, z_2 \in \partial \mathbb{D}$ such that $G_I(z_1)=w_1$ and $G_I(z_2)=w_2$.
Choose $\zeta \in \mathbb{D}$ such that $|z_i-\zeta|=2(1-|\zeta|)$ for $i=1,2$. 
We choose any $z_0$ in the arc $\wideparen{z_1z_2}$ and let $w_0=G_I(z_0)$.
Then, \eqref{linear} for $z=z_i$ $(i=0,1,2)$ is written as
\[
w_i-G_I(\zeta)=(z_i-\zeta)G'_I(\zeta)(1+R_I^\zeta(z_i)).
\]
Taking the differences of these equations yields 
\begin{equation}\label{ij}
w_i-w_j=(z_i-z_j)G'_I(\zeta)(1+\tilde R_I^\zeta(z_i,z_j)) \quad (i \neq j),
\end{equation}
where $\tilde R_I^\zeta(z_i,z_j) \to 0$ as $|z_1-z_2| \to 0$ whenever $\zeta$ is taken as $|z_i-\zeta|=2(1-|\zeta|)$ and no matter how $z_0 \in \wideparen{z_1z_2}$ is chosen, and independently of $I$.
In fact, \eqref{Taylor} gives that
\[
\tilde R_I^\zeta(z_i,z_j)=\frac{1}{2}\frac{G''_I(\zeta)}{G'_I(\zeta)}(z_i+z_j-2\zeta)+o(1) \quad (|z_i-z_j| \to 0).
\]
By \eqref{ij}, we have
\[
\frac{|w_1-w_0|+|w_0-w_2|}{|w_1-w_2|}=\frac{|z_1-z_0|+|z_0-z_2|+o(|z_1-z_2|)}{|z_1-z_2|+o(|z_1-z_2|)} \to 1
\]
uniformly as $|z_1-z_2| \to 0$. 
The assumption that $g^{-1}$ is uniformly continuous ensures that $|z_1-z_2| \to 0$ uniformly as $|w_1-w_2| \to 0$. 
This proves that $\Gamma$ is an asymptotically conformal quasicircle.
\end{proof}

A localized version of Theorem~\ref{B_0->AS} also holds. 
We say that a subarc of $\Gamma$ is asymptotically conformal if points $a$, $b$, and $w$ on the subarc satisfy
the condition for the asymptotic conformality \eqref{AC}.

\begin{corollary}\label{localization2}
If $\log G' \in B_0(\mathbb{H})$ then $\Gamma$ is locally asymptotically conformal, that is, any bounded closed subarc of $\Gamma$ is asymptotically conformal.
\end{corollary}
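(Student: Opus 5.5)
The plan is to localize the proof of Theorem~\ref{B_0->AS} to a bounded window of the parameter line, where compactness replaces the uniform continuity of $g^{-1}$. Fix a bounded closed subarc $\Gamma_0\subset\Gamma$ and write $\Gamma_0=g(I_0)$ for a bounded closed interval $I_0\subset\mathbb{R}$. We must show that
\[
\max_{w\in\wideparen{w_1w_2}}\frac{|w_1-w|+|w-w_2|}{|w_1-w_2|}\to 1
\]
uniformly as $|w_1-w_2|\to0$, for $w_1,w_2\in\Gamma_0$.

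First, we reduce to intervals of length $1$. Cover $I_0$ by finitely many closed intervals $I$ of length $1$, each overlapping its neighbours by length $1/2$. If $|w_1-w_2|$ is small, then $|g^{-1}(w_1)-g^{-1}(w_2)|$ is small. The reason is that $g^{-1}$ is continuous on the compact arc $g(I_0')$, where $I_0'$ is the $1$-neighbourhood of $I_0$, and $g$ is a homeomorphism onto $\Gamma$. Consequently $w_1,w_2$ lie in a common $g(I)$ from the finite cover.

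Next, we apply the Visser--Ostrowski argument inside each window. As in the proof of Theorem~\ref{B_0->AS}, the same argument as in Proposition~\ref{localize_Gamma:label} gives $\log G_I'\in B_0(\mathbb{D})$ with $G_I=G\circ T_I$. Since the hypothesis $\log G'\in B_0(\mathbb{H})$ is global, the vanishing is uniform in $I$. We then reproduce the estimates \eqref{VO}--\eqref{ij} verbatim. They show that the ratio above tends to $1$ uniformly as $|z_1-z_2|\to0$, where $z_j=G_I^{-1}(w_j)\in\partial\mathbb{D}$. Here $w_0$ ranges over the arc $\wideparen{w_1w_2}$, which is the image of the arc $\wideparen{z_1z_2}$ between $z_1,z_2$ inside $\tilde I$.

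It remains to pass from small $|w_1-w_2|$ to small $|z_1-z_2|$. This is the only point where Theorem~\ref{B_0->AS} used the uniform continuity of $g^{-1}$. Here we instead use that $G_I^{-1}=T_I^{-1}\circ g^{-1}$ restricted to the compact arc $g(I)$ is a homeomorphism onto $\tilde I$. It is therefore uniformly continuous, and since there are only finitely many windows $I$, we get a common modulus of continuity. Hence $|w_1-w_2|\to0$ forces $|z_1-z_2|\to0$ uniformly for $w_1,w_2\in\Gamma_0$, which completes the argument.

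The main obstacle is purely bookkeeping: making sure the subarc $\wideparen{w_1w_2}$ relevant to \eqref{AC} is the image of the short arc of $\partial\mathbb{D}$ inside $\tilde I$. This holds because $|g^{-1}(w_1)-g^{-1}(w_2)|$ is small while the complementary arc of $\Gamma$ through $\infty$ has large diameter. Apart from that check, every estimate is inherited from the proof of Theorem~\ref{B_0->AS}.
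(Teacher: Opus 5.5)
Your proposal is correct and follows the approach the paper intends. The paper states this corollary without a separate proof, as the localized version of Theorem~\ref{B_0->AS}; the uniform continuity of $g^{-1}$ enters only at the step where $|w_1-w_2|\to0$ must force $|z_1-z_2|\to0$, and on a bounded closed subarc this follows from compactness, as you use via finitely many unit windows (it also helps that the $T_I$ are translates of one another). Your check that $\wideparen{w_1w_2}$ is the image of the short arc in $\tilde I$ is the remaining bookkeeping.
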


\section{Asymptotically symmetric embeddings}\label{sec:asymptotically_symmetric_embeddings}

In the previous two sections, we have established the equivalence between the little Bloch condition and the asymptotic conformality under the assumption that the boundary map is uniformly continuous.
In this section, we establish their equivalence without these uniform continuity assumptions.
In this section, we formulate their equivalence without this assumption.

We continue with the conformal mapping $G\colon \mathbb{H} \to \Omega$ with $G(\infty)=\infty$ whose boundary extension $g\colon \mathbb{R} \to \Gamma \subset \mathbb{C}$ is quasisymmetric. 
The crucial step is Theorem \ref{import}, which shows that $\log G' \in B_0(\mathbb{H})$ implies that $g$ is an asymptotically symmetric embedding. 

First, we extend the results of Brania and Yang \cite[Theorems 3.1 \& 3.2]{BY} to the present non-compact setting. 

\begin{proposition}\label{BYmain}
Suppose that $\Gamma$ is an asymptotically conformal quasicircle.
Under the assumption that $g\colon \mathbb R\to\Gamma$ is uniformly continuous, the map $g$ is an asymptotically symmetric embedding.
\end{proposition}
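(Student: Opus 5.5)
The plan is to argue by contradiction with a blow-up (normalization) argument. Since $g$ is uniformly continuous, small parameter scales give small Euclidean scales on $\Gamma$. There the asymptotic conformality of $\Gamma$ applies, and the limiting curve of the rescaled arcs is forced to be a straight line.

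Suppose $g$ is not asymptotically symmetric. Then there are $\varepsilon>0$, $t>0$ and triples $x_n,a_n,b_n$ lying in intervals of length $\delta_n\to0$ with
\[
\frac{|x_n-a_n|}{|x_n-b_n|}\le t
\quad\text{but}\quad
\frac{|g(x_n)-g(a_n)|}{|g(x_n)-g(b_n)|}>(1+\varepsilon)t .
\]
I normalize on both sides by affine maps. On the parameter side I set $A_n(s)=x_n+\lambda_n s$ with $\lambda_n=|x_n-b_n|$, so the triple becomes $0$, $\alpha_n$, $\pm1$ with $|\alpha_n|\le t$. On the image side I set
\[
B_n(w)=\frac{w-g(x_n)}{g(b_n)-g(x_n)}.
\]
The maps $h_n=B_n\circ g\circ A_n\colon\mathbb{R}\to\mathbb{C}$ are $\eta$-quasisymmetric with the same gauge $\eta$ as $g$, because affine maps preserve ratios. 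They also satisfy $h_n(0)=0$ and $|h_n(\pm1)|=1$ at the relevant point. By the standard compactness of normalized $\eta$-quasisymmetric families, a subsequence converges locally uniformly to an $\eta$-quasisymmetric embedding $h\colon\mathbb{R}\to\mathbb{C}$. We may also assume $\alpha_n\to\alpha$ with $|\alpha|\le t$.

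The key step is to show that $h(\mathbb{R})$ is a straight line and that $h$ is affine, hence a similarity. Fix $R>0$. The arc $g(A_n([-R,R]))$ has parameter length $2R\lambda_n\le 2R\delta_n\to0$. By uniform continuity of $g$, its Euclidean diameter tends to $0$. Asymptotic conformality then says that for any three points on this subarc the ratio $(|z_1-z|+|z-z_2|)/|z_1-z_2|$ is at most $1+o(1)$. This ratio is invariant under $B_n$, so it passes to the limit. For every $s_1<s<s_2$ we get
\[
|h(s_1)-h(s)|+|h(s)-h(s_2)|=|h(s_1)-h(s_2)|,
\]
and therefore $h(\mathbb{R})$ is a line with $h$ monotone along it.

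It remains to show that $h$ is linear in $s$, and I expect this to be the main obstacle. Knowing that the image is a line only gives a quasisymmetric homeomorphism of $\mathbb{R}$; it does not give linearity. Here I would follow Brania--Yang and use conformality, not just the curve geometry. By Lemma~\ref{uc1} and Theorem~\ref{AS->B_0}, $\log G'\in B_0(\mathbb{H})$. Rescaling $G$ by the same affine maps gives $G_n=B_n\circ G\circ A_n$, which are conformal on $\mathbb{H}$ with boundary values $h_n$. They extend to uniformly quasiconformal maps of $\mathbb{C}$ and converge locally uniformly to a quasiconformal map $H$ that is conformal on $\mathbb{H}$ and has boundary values $h$.

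The little Bloch condition is scale invariant under $A_n$. Because $\lambda_n\to0$, for each fixed $z\in\mathbb{H}$ we get
\[
(\operatorname{Im}z)\,|(\log G_n')'(z)|=(\lambda_n\operatorname{Im}z)\,|(\log G')'(A_n z)|\to0 .
\]
Hence $(\log H')'\equiv0$, so $H$ is affine and $h$ is a similarity. (This also recovers the line property directly.)

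Finally, for a similarity $h$,
\[
\frac{|h(0)-h(\alpha)|}{|h(0)-h(\pm1)|}=|\alpha|\le t .
\]
On the other hand, passing the assumed inequality to the limit gives $\ge(1+\varepsilon)t$, which is a contradiction. One detail needs care: if $\alpha=0$, then $|h(0)-h(\alpha)|=0$ while the limit of the assumed ratios is at least $(1+\varepsilon)t>0$, so the same contradiction holds. We must also make sure that $|h(\pm1)|=1$ survives in the limit, which local uniform convergence guarantees.
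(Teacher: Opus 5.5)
Your argument is correct, and it takes a genuinely different route from the paper's.

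Both proofs start the same way. Theorem~\ref{AS->B_0}, which is where the uniform continuity of $g$ enters, gives $\log G'\in B_0(\mathbb{H})$.

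The paper then uses the half-plane Becker--Pommerenke theorem. This yields a quasiconformal extension of $G$ whose dilatation vanishes on approach to $\mathbb{R}$ from $\mathbb{H}^*$. The paper then appeals to the Brania--Yang modulus-of-curve-families argument.

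You replace both steps by a blow-up. The maps $G_n=B_n\circ G\circ A_n$ are $K$-quasiconformal on $\mathbb{C}$ and conformal on $\mathbb{H}$. After passing to a subsequence that fixes the side of $b_n$, they are normalized at three points: $0\mapsto0$, $\pm1\mapsto1$, $\infty\mapsto\infty$. Hence they subconverge to some $H$. The identity $(\operatorname{Im}z)|(\log G_n')'(z)|=(\operatorname{Im}A_nz)\,|(\log G')'(A_nz)|$, together with $\lambda_n\to0$, forces $H$ to be affine. Uniformity of the $B_0$ condition in $\operatorname{Re}z$ is what controls the drifting points $x_n$.

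What your route buys:
\begin{itemize}
\item It is self-contained: there is no extension theorem, no modulus estimates, and no need to check that the Brania--Yang proof survives non-compactness, which the paper only asserts.
\item It proves more. Your second half uses only $\log G'\in B_0(\mathbb{H})$, so it is in effect a compactness proof of Theorem~\ref{import}.
\end{itemize}
What it costs is quantitativeness: you get no explicit $\delta(\varepsilon,t)$. The paper's proof of Theorem~\ref{import}, which integrates $G'$ along polygonal paths, does give one.

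Three simplifications are available:
\begin{itemize}
\item The step showing that $h(\mathbb{R})$ is a line is redundant, because $h=H|_{\mathbb{R}}$ is already a similarity.
\item The separate compactness argument for the $h_n$ is redundant for the same reason.
\item The case $\alpha=0$ needs no special care, since $|\alpha|\le t<(1+\varepsilon)t\le|h(\alpha)|$ is contradictory in every case.
\end{itemize}
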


\begin{proof}
By Theorem \ref{AS->B_0}, we have $\log G' \in B_0(\mathbb{H})$. 
Then, by the half-plane version of a theorem of Becker and Pommerenke \cite{BP} (see \cite[Theorem 5.1]{MW}), $G$ extends to a quasiconformal self-homeomorphism of $\mathbb{C}$ that is asymptotically conformal on the lower half-plane $\mathbb{H}^*$.
Consequently, the proof of \cite[Theorem 3.1]{BY}, based on modulus estimates for curve families, works in the present setting as well, which implies that $g=G|_{\mathbb{R}}$ is an asymptotically symmetric embedding.
\end{proof}

Using the relative notion introduced in Section~\ref{sec:introduction},
we consider the asymptotic conformality of $\Gamma$.
The following claim is obvious from the definition.

\begin{proposition}\label{general}
Let $\gamma\colon \mathbb{R} \to \mathbb{C}$ be a quasisymmetric embedding and suppose that its image $\Gamma=\gamma(\mathbb{R})$ is an asymptotically conformal quasicircle.
If $\gamma$ is uniformly continuous then $\Gamma$ is asymptotically conformal relative to the parametrization $\gamma\colon \mathbb{R} \to \Gamma$.
\end{proposition}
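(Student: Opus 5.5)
The plan is to compare the two suprema directly. The point is that uniform continuity of $\gamma$ turns smallness of the parameter distance into smallness of the Euclidean distance. Write
\[
M(s)\coloneqq\sup\Bigl\{\max_{z\in\wideparen{z_1z_2}}\frac{|z_1-z|+|z-z_2|}{|z_1-z_2|}: z_1,z_2\in\Gamma,\ 0<|z_1-z_2|\le s\Bigr\}.
\]
By the definition \eqref{AC} of asymptotic conformality of $\Gamma$, $M(s)\to1$ as $s\to0$. Also write $N(t)$ for the corresponding supremum over pairs with $0<|\gamma^{-1}(z_1)-\gamma^{-1}(z_2)|\le t$, which is the quantity in the definition of relative asymptotic conformality.

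First, $N(t)\ge1$ always, by the triangle inequality. Next, uniform continuity of $\gamma$ gives a modulus $\omega(t)\coloneqq\sup\{|\gamma(x)-\gamma(y)|:|x-y|\le t\}$ with $\omega(t)\to0$ as $t\to0$. If $0<|\gamma^{-1}(z_1)-\gamma^{-1}(z_2)|\le t$, then $z_1\neq z_2$ because $\gamma$ is injective, and $|z_1-z_2|\le\omega(t)$. Hence every pair admissible in $N(t)$ is admissible in $M(\omega(t))$, so
\[
1\le N(t)\le M(\omega(t)).
\]
Letting $t\to0$ gives $\omega(t)\to0$ and therefore $N(t)\to1$.

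The only point needing care is that both definitions use the same subarc $\wideparen{z_1z_2}$. Since $\Gamma$ passes through $\infty$ and $\gamma$ is a homeomorphism $\mathbb{R}\to\Gamma$, the subarc joining $z_1$ and $z_2$ in $\mathbb{C}$ is the bounded one, namely $\gamma([\gamma^{-1}(z_1),\gamma^{-1}(z_2)])$. So the two definitions agree on this point. Beyond that, the argument requires no real estimate, consistent with the statement being ``obvious from the definition''; the quasisymmetry of $\gamma$ is not needed for this direction.
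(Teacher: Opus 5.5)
Your proof is correct and is exactly the argument the paper has in mind. The paper gives no proof, calling the claim obvious from the definition, and its content is precisely your observation: uniform continuity of $\gamma$ makes every pair admissible in the relative supremum also admissible in the Euclidean supremum at scale $\omega(t)$, so $1\le N(t)\le M(\omega(t))\to 1$. Your side remarks are also accurate: in both definitions the subarc is the bounded one $\gamma([\gamma^{-1}(z_1),\gamma^{-1}(z_2)])$, and quasisymmetry is not used.
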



The proof of \cite[Theorem 3.1]{BY},
combined with the argument in Proposition~\ref{BYmain}, also yields the following theorem.
Indeed, the same proof applies to the relative formulation of asymptotic conformality, which is weaker than the assumption in Proposition~\ref{BYmain} by Proposition~\ref{general}.

\begin{theorem}
If $\Gamma$ is an asymptotically conformal quasicircle relative to the parametrization $g\colon \mathbb{R} \to \Gamma$, then $g$ is an asymptotically symmetric embedding.
\end{theorem}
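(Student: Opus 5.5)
The plan is to avoid the uniform continuity of $g$ altogether by working at a small scale in the parameter $\mathbb{R}$ and rescaling. Suppose the conclusion fails. Then there are $\varepsilon>0$, $t>0$ and triples $x_n,a_n,b_n\in\mathbb{R}$ contained in intervals $J_n$ with $|J_n|\to 0$ such that $|x_n-a_n|\le t|x_n-b_n|$ and
\[
\frac{|g(x_n)-g(a_n)|}{|g(x_n)-g(b_n)|}>(1+\varepsilon)t .
\]
We normalize by affine maps in both source and target. In the source we set $A_n(s)=x_n+|J_n|s$. In the target we set $B_n(w)=(w-g(x_n))/|g(x_n)-g(x_n+|J_n|)|$. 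Define $g_n=B_n\circ g\circ A_n$ and $G_n=B_n\circ G\circ A_n$. Each $G_n$ is a conformal map of $\mathbb{H}$ with $G_n(\infty)=\infty$, and its quasiconformal extension has the same dilatation bound as that of $G$. So the $G_n$ form a normal family of uniformly quasisymmetric maps, all normalized by $G_n(0)=0$ and $|G_n(1)|=1$. After passing to a subsequence, $g_n\to g_\infty$ locally uniformly, where $g_\infty$ is a quasisymmetric embedding. The rescaled triples $0,\ (a_n-x_n)/|J_n|,\ (b_n-x_n)/|J_n|$ lie in $[-1,1]$, so we may also assume they converge. If two of the limit points coincide, we renormalize at the smaller scale instead, using the distance of the closer pair, as in Brania--Yang.

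Next we show that the limit curve $\Gamma_\infty=g_\infty(\mathbb{R})$ is a straight line. The relative asymptotic conformality is exactly what makes this work. Take any bounded set of parameters $s_1,s_2$ in the rescaled picture. Their preimages in $\mathbb{R}$ satisfy $|A_n(s_1)-A_n(s_2)|\le C|J_n|\to0$. The relative hypothesis then gives
\[
\max_{w\in\wideparen{g(A_ns_1)\,g(A_ns_2)}}\frac{|g(A_ns_1)-w|+|w-g(A_ns_2)|}{|g(A_ns_1)-g(A_ns_2)|}\to1 .
\]
This quantity is invariant under the similarity $B_n$. Passing to the limit, every bounded subarc of $\Gamma_\infty$ satisfies the three-point condition with constant $1$, so each subarc is a segment. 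Hence $\Gamma_\infty$ is a line, and $G_\infty$, the limit of the $G_n$, maps $\mathbb{H}$ onto a half-plane bounded by that line.

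Since $G_\infty$ is a conformal map of $\mathbb{H}$ onto a half-plane fixing $\infty$, it is affine. Therefore $g_\infty$ is an affine map of $\mathbb{R}$ onto the line. For affine $g_\infty$ the ratio $|g_\infty(x)-g_\infty(a)|/|g_\infty(x)-g_\infty(b)|$ equals $|x-a|/|x-b|\le t$. On the other hand, passing the assumed inequality to the limit gives a ratio of at least $(1+\varepsilon)t$, which is a contradiction. Uniform quasisymmetry keeps both denominators away from $0$ after the normalization, which justifies the limit.

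I expect the main obstacle to be the conformality of the limit: we must know that $g_\infty$ is the boundary map of a conformal $G_\infty$, not just a quasisymmetric map onto a line. A quasisymmetric parametrization of a line need not be affine. To handle this I would use the normal-family convergence $G_n\to G_\infty$ on $\mathbb{H}$, which is locally uniform and conformal by Hurwitz. I would then use the Carathéodory kernel theorem to identify $G_\infty(\mathbb{H})$ with the half-plane. This requires that $G_n(\mathbb{H})$ does not also spill onto the other side of the line in the limit; uniform quasisymmetry of the extensions rules this out. The degenerate-limit case also needs care, and I would treat it via the rescaling at the smaller scale.
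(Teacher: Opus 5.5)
Your argument is correct, and it takes a genuinely different route from the paper's. The paper does not use the hypothesis directly. It first converts relative asymptotic conformality into $\log G'\in B_0(\mathbb{H})$ via Theorem~\ref{newformilation}, which is itself a Pommerenke-type kernel-convergence argument at interior points. It then invokes the half-plane Becker--Pommerenke theorem to obtain a quasiconformal extension of $G$ whose dilatation vanishes at $\mathbb{R}$, and finally runs the Brania--Yang modulus estimates. Alternatively, Theorem~\ref{import} obtains asymptotic symmetry from $B_0$ by integrating $G'$ along polygonal paths.

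You instead blow up directly at boundary points, using three ingredients:
\begin{enumerate}
\item compactness of normalized $K$-quasiconformal maps;
\item the relative hypothesis, which forces every tangent curve to be a line;
\item the rigidity that a conformal map of $\mathbb{H}$ onto a half-plane fixing $\infty$ is affine.
\end{enumerate}
This is shorter and self-contained: it needs no Bloch estimates, no Becker--Pommerenke extension and no curve-family moduli. It also shows exactly where conformality, rather than mere quasisymmetry, enters. What it does not give is any quantitative dependence of $\delta$ on $(\varepsilon,t)$. Nor does it give the intermediate statement $\log G'\in B_0(\mathbb{H})$, which the paper needs anyway for Theorem~\ref{thm:intro-little-bloch}. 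The paper's route through Theorem~\ref{import} gives explicit control of the three-point distortion in terms of the size of $(\operatorname{Im}z)|(\log G')'(z)|$.

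Two simplifications tighten your write-up.

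\emph{The degenerate case.} It disappears if you rescale from the start by $h_n=|x_n-b_n|\le |J_n|\to 0$ instead of $|J_n|$. The rescaled triple is then $0$, $\alpha_n\in[-t,t]$, $\pm1$. The denominators converge to $|g_\infty(0)-g_\infty(\pm1)|>0$ by injectivity. Affinity of $g_\infty$ then turns the limit inequality into $|\alpha|\ge(1+\varepsilon)t$, against $|\alpha|\le t$, whatever the value of $\alpha$. No renormalization at a second scale is needed.

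\emph{The conformality of the limit.} The Carath\'eodory kernel theorem is unnecessary here. The limit $G_\infty$ of the normalized extensions is a homeomorphism of $\mathbb{C}$, and by Hurwitz it is conformal on $\mathbb{H}$. So $G_\infty(\mathbb{H})$ is automatically one of the two half-planes bounded by the line $G_\infty(\mathbb{R})$, and the ``spilling over'' you worry about cannot occur.
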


The converse statement holds more generally for asymptotically symmetric embeddings, without assuming that they arise as boundary extensions of conformal mappings. 
This extends \cite[Theorem 3.2]{BY} to the present setting.
The same proof works both when we assume that $\gamma^{-1}$ is uniformly continuous and when we assume that $\Gamma$ is asymptotically conformal relative to $\gamma\colon \mathbb{R} \to \Gamma$.

\begin{theorem}\label{BY2}
Let $\gamma\colon \mathbb{R} \to \mathbb{C}$ be an asymptotically symmetric embedding with image $\Gamma=\gamma(\mathbb{R})$. 
Then $\Gamma$ is asymptotically conformal relative to $\gamma\colon \mathbb{R} \to \Gamma$.
Moreover, under the assumption that $\gamma^{-1}$ is uniformly continuous, $\Gamma$ is an asymptotically conformal quasicircle. 
\end{theorem}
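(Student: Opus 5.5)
Given distinct $z_1,z_2\in\Gamma$, we write $a=\gamma^{-1}(z_1)$ and $b=\gamma^{-1}(z_2)$, and assume $a<b$ with $|a-b|\le t$. Since $\gamma$ is a homeomorphism of $\mathbb{R}$ onto $\Gamma$ sending $\infty$ to $\infty$, the subarc $\wideparen{z_1z_2}$ is exactly $\gamma([a,b])$. So any $w\in\wideparen{z_1z_2}$ has the form $w=\gamma(x)$ with $x\in[a,b]$, and the points $a,x,b$ lie in an interval of length at most $t$. The plan is therefore to show that, whenever $x$ lies between $a$ and $b$ and all three points are in a short interval, we have $|\gamma(a)-\gamma(x)|+|\gamma(x)-\gamma(b)|\le(1+\varepsilon)|\gamma(a)-\gamma(b)|$.

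We split according to the position of $x$. Fix $\varepsilon>0$ and a small $s\in(0,1/2)$, to be chosen in terms of $\varepsilon$.

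\emph{Case 1: $x$ is close to an endpoint,} say $|x-a|\le s|a-b|$ (the other endpoint is symmetric). We apply \eqref{eq:AS} with the ratio $t=s/(1-s)$ to the triple $(x,a,b)$, since $|x-a|/|x-b|\le s/(1-s)$. This gives $|\gamma(x)-\gamma(a)|\le(1+\varepsilon)\frac{s}{1-s}|\gamma(x)-\gamma(b)|$, which is small relative to $|\gamma(x)-\gamma(b)|$. Combining this with the triangle inequality $|\gamma(x)-\gamma(b)|\le|\gamma(x)-\gamma(a)|+|\gamma(a)-\gamma(b)|$ gives $|\gamma(x)-\gamma(a)|\le C s\,|\gamma(a)-\gamma(b)|$. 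Hence the sum is at most $(1+2Cs)|\gamma(a)-\gamma(b)|$.

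\emph{Case 2: $x$ is in the middle,} i.e.\ $s|a-b|\le|x-a|\le(1-s)|a-b|$. Here we need a genuine near-collinearity. Asymptotic symmetry forces near-equality in the triangle inequality, via the points $a,x,b$ and the midpoint-type comparisons. Concretely, set $\lambda=|x-a|/|a-b|$. Using \eqref{eq:AS} twice in each direction, with ratios $t$ and $1/t$, on the triples $(a,x,b)$ and $(b,x,a)$, we get $|\gamma(x)-\gamma(a)|\approx\lambda|\gamma(a)-\gamma(b)|$ and $|\gamma(x)-\gamma(b)|\approx(1-\lambda)|\gamma(a)-\gamma(b)|$, each up to a factor $(1+\varepsilon)^{\pm1}$. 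Summing these yields the required bound $(1+\varepsilon)^2$. This uses only \eqref{eq:AS} applied to three points in a ball of radius $\delta$. The ratios lie in the compact range $[s/(1-s),(1-s)/s]$, so we can choose a single $\delta$ uniform over this range, for instance by a finite net of ratios together with monotonicity in $t$.

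The main obstacle is exactly this uniformity in Case 2. Definition \eqref{eq:AS} gives $\delta$ depending on $t$, and "$\le(1+\varepsilon)t$" only gives one-sided control. I would handle this by discretizing $t$ over a finite grid in $[s/(1-s),(1-s)/s]$ and taking the minimum of the corresponding $\delta$'s. Because the relative quantity depends only on parameter distances, this $\delta$ is independent of where the interval sits in $\mathbb{R}$, which is precisely the relative asymptotic conformality.

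For the second assertion, assume $\gamma^{-1}$ is uniformly continuous. Then $|z_1-z_2|\to0$ forces $|\gamma^{-1}(z_1)-\gamma^{-1}(z_2)|\to0$ uniformly, so the relative condition just proved implies the Euclidean condition \eqref{AC}. Quasisymmetry of $\gamma$ guarantees that $\Gamma$ is a quasicircle, so $\Gamma$ is an asymptotically conformal quasicircle.
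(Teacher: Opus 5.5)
Your argument is correct and is essentially the proof the paper has in mind. The paper writes out no proof of its own; it says that the proof of \cite[Theorem 3.2]{BY} carries over. That proof is the three-point estimate from \eqref{eq:AS} at parameter scale, which gives the relative statement without any compactness, followed by uniform continuity of $\gamma^{-1}$ to pass to \eqref{AC}, exactly as you do.

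One cosmetic correction to Case~2. Your triples have base points $a$ and $b$, so the ratios fed into \eqref{eq:AS} are $\lambda$ and $1/\lambda$ with $\lambda\in[s,1-s]$. The relevant compact range is therefore $[s,1/s]$, not $[s/(1-s),(1-s)/s]$. In fact only the upper bound $|\gamma(a)-\gamma(x)|\le(1+\varepsilon)(\lambda+h)|\gamma(a)-\gamma(b)|$ and its analogue at $b$ are needed. Taken over a finite grid of step $h$ in $(0,1]$, they give the sum bound $(1+\varepsilon)(1+2h)$ and cover Case~1 as well.
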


We now revisit the proof of Theorem~\ref{AS->B_0}.
Theorem~\ref{AS->B_0} remains valid if the uniform continuity assumption on $g$ is replaced by the relative asymptotic conformality of $\Gamma$.

\begin{theorem}\label{newformilation}
If $\Gamma$ is an asymptotically conformal quasicircle relative to $g\colon \mathbb{R} \to \Gamma$, then $\log G' \in B_0(\mathbb{H})$.
\end{theorem}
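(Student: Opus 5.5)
We follow the proof of Theorem~\ref{AS->B_0} step by step. Uniform continuity of $g$ was used at only two points, and we replace each use by the relative asymptotic conformality. Suppose, to the contrary, that $\log G'\notin B_0(\mathbb{H})$. Then there are $\tilde z_n$ with $\operatorname{Im}\tilde z_n\to0$ satisfying \eqref{ass}. As before we take unit intervals $I_n$ centred at $\operatorname{Re}\tilde z_n$, the stadium maps $T_{I_n}$, $G_{I_n}=G\circ T_{I_n}$, and the points $\tilde\zeta_n=\Phi_{\zeta_n}(r)$. This gives \eqref{contradiction} and \eqref{another}. These steps use only Proposition~\ref{bloch_norm_H:label}, the fact that $\log T'_I\in B_0(\mathbb{D})$ uniformly in $I$ (all $T_I$ are translates), and the uniform bi-Lipschitz constant $\kappa$. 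None of them needs uniform continuity. Set $z_n=T_{I_n}(\zeta_n)$ and $x_n=\operatorname{Re} z_n$. Note that $\operatorname{Im} z_n\to 0$, since $|\zeta_n|\to1$ and $\kappa$ is uniform.

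The first use of uniform continuity was to get $d(\zeta_n)\to0$, so that we could rescale. In fact only the \emph{normalized} geometry matters, so we plan to avoid needing $d(\zeta_n)\to0$ at all. We will pick the comparison disk $B_n$ in the parameter domain, not in the image. Choose $s_n\to0$ with $s_n/\operatorname{Im} z_n\to\infty$, and take the boundary points $a_n=x_n-s_n$ and $b_n=x_n+s_n$ of $I_n$. Let $\Gamma_n=g([a_n,b_n])$. By quasisymmetry of $G$ (via \eqref{qs1}, \eqref{qs2}, \eqref{d_f} and \eqref{trivial}), the endpoints $g(a_n)$ and $g(b_n)$ lie at distance comparable to $\eta$-functions of $s_n/\operatorname{Im}z_n$ times $d(\zeta_n)$ from $G(z_n)$. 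Hence, in the coordinates $\Psi_n$ (rescaling by $d(\zeta_n)$), $|\Psi_n(g(a_n))|$ and $|\Psi_n(g(b_n))|$ tend to $\infty$. The same quasisymmetry argument as in the original proof, with $\eta(t_n)<1/n$, shows that $\Psi_n(\partial\Omega_{I_n}\setminus\Gamma_n)$ and $\Psi_n(\Gamma\setminus\Gamma_n)$ escape to infinity.

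The second use of uniform continuity was to obtain \eqref{n4} for the arc $\Gamma_n$. This is exactly where the relative hypothesis enters. Since $|b_n-a_n|=2s_n\to0$, relative asymptotic conformality gives
\[
\max_{w\in\Gamma_n}\frac{|g(a_n)-w|+|w-g(b_n)|}{|g(a_n)-g(b_n)|}\to1 .
\]
This holds without any information on the Euclidean size of $\Gamma_n$, and it is scale invariant. So it survives the affine normalization $\Psi_n$. Passing to a subsequence so that the deviation is below $n^{-4}$, we reproduce the analogues of Claims~1 and~2. One adjustment is needed: the endpoints now sit at radii $R_n\to\infty$, only comparable to each other rather than equal to $n$. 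We therefore phrase the claims as follows. The normalized arc lies in a strip of width $o(R_n)$ about a line, and that line passes at distance $\to1$ from $0$ (distance normalized by $d(\zeta_n)$). Kernel convergence then again yields the limit $\mathbb{H}^*(1)$ after a rotation. Claim~3 is unchanged, because it used only quasisymmetry with the uniform constant $\kappa$. The contradiction with \eqref{another} then follows verbatim.

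The main obstacle is the second step: the replacement for Claims~1 and~2. We must ensure two things. First, the two endpoints of $\Psi_n(\Gamma_n)$ are genuinely far from $0$ on \emph{both} sides. Second, the nearest boundary point, at normalized distance $1$, lies on $\Psi_n(\Gamma_n)$, so that the straight limiting line is at distance exactly $1$. We would handle both by quasisymmetry of $G$ on the triple $z_n$, $a_n$, $b_n$ and its nearest-point preimage, choosing $s_n$ so that $\eta^{-1}$-type lower bounds give $R_n\to\infty$.
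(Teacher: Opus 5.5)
Your route differs from the paper's in where you cut out the comparison arc. The paper keeps the image-side disk $B_n$ of radius $n\,d(\zeta_n)$, so the endpoints $w_n,w_n'$ lie at normalized radius exactly $n$ and Claims~1--2 carry over verbatim. The relative hypothesis enters only through quasisymmetry of $G^{-1}$. Since $|w_n-G(z_n)|=n\,d(\zeta_n)\le n\,|g(x_n)-G(z_n)|$, one gets $|G^{-1}(w_n)-z_n|\le\eta^{*}(n)\operatorname{Im}z_n$ (with $\eta^{*}$ a gauge for $G^{-1}$), and likewise for $w_n'$. After passing to a suitable subsequence this tends to $0$, which gives \eqref{n4}. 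Cutting the arc in the parameter domain, as you do, makes the use of the relative hypothesis immediate. It also makes the escape of $\Psi_n(\Gamma\setminus\Gamma_n)$ immediate; your bound via $|y-z_n|\ge s_n$ for $y\notin[a_n,b_n]$ is fine. But it discards the fixed-radius normalization on which Claims~1--2 rest, and your replacement for them does not work as stated.

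The failing step is the assertion that a strip of width $o(R_n)$ suffices. Kernel convergence to $\mathbb{H}^*(1)$, and even the claim that the line passes at distance $\to1$ from $0$, require the boundary to be within $o(1)$ of a line on each fixed disk $\{|w|<R\}$ in normalized coordinates. Let $\epsilon_n$ be the deviation in the asymptotic-conformality ratio for $\Gamma_n$. The ellipse with foci $\Psi_n(g(a_n))$, $\Psi_n(g(b_n))$ has minor semi-axis of order $R_n\sqrt{\epsilon_n}$. Near $0$, which sits at distance comparable to $R_n$ from both foci, that is all a single ellipse tells you, so you need $R_n^2\epsilon_n\to0$. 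Passing to a subsequence with deviation below $n^{-4}$ does not provide this. $R_n$ is governed by $\eta(s_n/\operatorname{Im}z_n)$ and $\epsilon_n$ by $s_n$; both belong to the same term, and relabeling does not change their relation. Nothing in the conditions $s_n\to0$, $s_n/\operatorname{Im}z_n\to\infty$ prevents $R_n\sqrt{\epsilon_n}\to\infty$.

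The missing step is to couple the two scales. Let $\epsilon(t)$ be the modulus of the relative asymptotic conformality, so that $1+\epsilon(t)$ is the supremum over pairs with parameter distance at most $t$. Take $s_n=M_n\operatorname{Im}z_n$ with $M_n\to\infty$ so slowly that $\epsilon(2M_n\operatorname{Im}z_n)\,\eta(M_n)^2\to0$. This is possible because, for each fixed $M$, the quantity tends to $0$ as $n\to\infty$. Then use $R_n\le\eta(1)\bigl(1+\eta(M_n)\bigr)$, which follows from \eqref{qs1} and \eqref{d_f}.

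Alternatively, apply the relative hypothesis to the subarc of $\Gamma_n$ through the nearest boundary point with endpoints on a fixed circle $\{|w|=R\}$. Its parameter length is at most $2s_n$, and this reproduces Claims~1--2 with $n$ replaced by $R$. You then need one more application of the same hypothesis to exclude re-entry of $\Psi_n(\Gamma_n)$ into $\{|w|<R/2\}$. With either repair, the rest of your argument (Claim~3 and the contradiction with \eqref{another}) goes through unchanged.
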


\begin{proof}
In the proof of Theorem \ref{AS->B_0}, it is no longer guaranteed that $nd(\zeta_n)\to 0$ as $n\to\infty$ without the uniform continuity of $g$, since the localization scale may drift along the boundary. 
Nevertheless, the estimate \eqref{n4} still follows from the assumption that $\Gamma$ is asymptotically conformal relative to $g\colon \mathbb{R} \to \Gamma$. 
Note that the configurations of the three points $G_{I_n}(\zeta_n)$, $w_n$, $w'_n$ and the corresponding three points $z_n$, $G^{-1}(w_n)$, $G^{-1}(w'_n)$ are comparable by the quasisymmetric self-homeomorphism $G$ of $\mathbb{C}$.
The rest of the proof proceeds as in Theorem~\ref{AS->B_0} and yields the desired conclusion.
\end{proof}

Finally, we prove the main result of this section.

\begin{theorem}\label{import}
Let $G\colon \mathbb{H}\to \Omega$ be a conformal mapping onto an unbounded quasidisk $\Omega$ with $G(\infty)=\infty$ such that
\[
\log G' \in B_0(\mathbb{H}).
\]
Let $g\colon \mathbb{R}\to \Gamma=\partial\Omega$ be the boundary extension of $G$.
Then $g$ is asymptotically symmetric.
\end{theorem}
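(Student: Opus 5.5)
The plan is to prove asymptotic symmetry of $g$ directly from the little Bloch condition. We estimate $G'$ on rectangles (or squares) sitting over small intervals of $\mathbb{R}$, and then integrate $G'$ along polygonal paths that approximate the boundary. This follows the outline in the introduction and avoids any uniform continuity assumption.

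\emph{Step 1: a local almost-constancy of $G'$.} Fix $x,a,b\in\mathbb{R}$ in an interval $J$ of length $<\delta$, and put $h=|J|$. Consider the rectangle $Q=J\times(0,Mh]$ with $M$ large, and the point $z_0=x_J+iMh$. For $z\in Q$ with $\operatorname{Im}z\ge \eta h$, integrate $(\log G')'$ along the vertical and horizontal segments joining $z$ to $z_0$. The hyperbolic length of this path is $O(M/\eta+\log(M/\eta))$.

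Since $\log G'\in B_0(\mathbb{H})$, we have $(\operatorname{Im}w)|(\log G')'(w)|<\varepsilon_1$ whenever $\operatorname{Im}w<(M+1)\delta$. This gives
\[
|\log G'(z)-\log G'(z_0)|\le C\varepsilon_1 (M/\eta)
\]
on $\{z\in Q:\operatorname{Im}z\ge\eta h\}$. Hence $G'(z)=G'(z_0)(1+o(1))$ there, uniformly in the position of $J$, once $\delta$ is small in terms of $\varepsilon_1,M,\eta$. The uniformity in the location of $J$ is exactly what $B_0(\mathbb{H})$ provides, whereas the earlier theorems needed uniform continuity for this.

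\emph{Step 2: comparing boundary differences to a linear map.} For $p,q\in\{x,a,b\}$, write
\[
g(p)-g(q)=\bigl(G(q+i\eta h)-g(q)\bigr)+\int_{q+i\eta h}^{p+i\eta h}G'\,dz+\bigl(g(p)-G(p+i\eta h)\bigr).
\]
By Step 1, the middle integral equals $G'(z_0)(p-q)(1+o(1))$.

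The vertical terms have to be controlled. By \eqref{qs2}, \eqref{d_f} and \eqref{yG'}, we get $|G(p+i\eta h)-g(p)|\lesssim \eta h|G'(p+i\eta h)|$. The constant here comes from the quasisymmetry gauge $\eta(\cdot)$ of $G$ and the Koebe distortion factor. Using Step 1, this is $\lesssim \eta h|G'(z_0)|$.

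For this to be negligible it must be small compared with $|p-q|\,|G'(z_0)|$. This holds provided $|p-q|\gtrsim h/t'$ for ratios in a fixed compact range, and $\eta$ is chosen small depending on $t$ and $\varepsilon$.

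\emph{Step 3: concluding asymptotic symmetry.} Given $t$ and $\varepsilon$, consider $|x-a|\le t|x-b|$. If $|x-a|$ is not tiny compared with $|x-b|$, take $J$ to be the smallest interval containing all three points. Both $|x-a|/h$ and $|x-b|/h$ are then bounded below by constants depending on $t$, so Step 2 gives
\[
\frac{|g(x)-g(a)|}{|g(x)-g(b)|}=\frac{|x-a|}{|x-b|}(1+o(1))\le(1+\varepsilon)t.
\]
If instead $|x-a|\le \theta |x-b|$ with $\theta$ tiny, the quasisymmetry \eqref{eq:qs} of $g$ gives a ratio at most $\eta_g(\theta)$. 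This is $\le(1+\varepsilon)t$ only when $\eta_g(\theta)\le t$, so the degenerate regime is handled by choosing $\theta$ accordingly. The intermediate case follows from Step 2 with $J$ adapted so that $h\asymp|x-b|$.

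The main obstacle is Step 2: the error from the vertical segments must be controlled relative to $|x-a|$ rather than to $h$. This forces $\eta$ to depend on $t$, and it needs a careful case split so that all constants stay independent of where $J$ lies on $\mathbb{R}$.
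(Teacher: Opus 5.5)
Your argument is correct. Its skeleton is the paper's: use $B_0(\mathbb{H})$ to make $G'$ almost constant on a box over a short interval, then linearize $g$ by integrating $G'$ along a polygonal path. The difference lies in how you handle the part near $\mathbb{R}$.

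\textbf{The paper's route.} It takes $h=|x-b|$ and integrates along $x\to x+ih\to y+ih\to y$ all the way down to the boundary. On vertical lines the little Bloch bound gives $|G'(\xi+iu)|\le |G'(x+ih)|e^{\varepsilon|\xi-x|/h}(h/u)^{\varepsilon}$, which is integrable in $u$. Writing $G'=G'(x+ih)(1+\theta)$, the main parts of the two vertical integrals cancel. This yields $g(y)-g(x)=G'(x+ih)\bigl((y-x)+E\bigr)$ with $|E|=O(\varepsilon)h$, the constants depending on $t$. No auxiliary parameter is needed, and the quasisymmetry of $G$ plays no role in the estimate.

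\textbf{Your route.} You stop at height $\eta h$ and bound the two short vertical pieces via \eqref{d_f} and \eqref{yG'} by $2\eta_G(1)\,\eta h\,|G'|$.
\begin{itemize}
\item That constant is not close to $1$ and you exploit no cancellation, so you need the extra parameter $\eta=\eta(t,\varepsilon)$.
\item Step 1 then has to work on a box of hyperbolic size about $1/\eta$, which forces $\varepsilon_1\ll\eta$.
\item You rely on the quasidisk hypothesis (quasisymmetry of $G$) where the paper uses only the $B_0$ estimate and continuity of $G$ up to $\mathbb{R}$.
\item In exchange, taking $J$ to be the smallest interval containing $x,a,b$ treats all orderings at once. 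This avoids the paper's somewhat sketchy reduction to $a<x<b$.
\end{itemize}

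\textbf{The obstacle you single out is not really there.} The target is $(1+\varepsilon)t$, not $(1+\varepsilon)|x-a|/|x-b|$. The vertical error in the numerator therefore only has to be small compared with $t|x-b|$, not with $|x-a|$. Since $h\le(1+t)|x-b|$, an additive error $O(\eta h)\le O(\eta(1+t))|x-b|$ is harmless once $\eta\ll\varepsilon\min\{t,1\}/(1+t)$. So Step 2 alone already covers the regime $|x-a|\ll|x-b|$. Your case split via the gauge $\eta_g$ (choosing $\theta$ with $\eta_g(\theta)\le t$) is valid but unnecessary, and the ``intermediate case'' is vacuous.
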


\begin{proof}
Fix $0<\varepsilon <1/10$. Since $\log G'\in B_0(\mathbb{H})$, there exists $\delta_0>0$ such that
\[
(\operatorname{Im}z )\left|\frac{G''}{G'}(z)\right|\leq \varepsilon 
\]
whenever $0<\operatorname{Im}z<\delta_0$. 
The facts that $G'\neq0$ in $\mathbb{H}$ and $\mathbb{H}$ is simply connected imply that we may choose a holomorphic branch of $\log G'(z)$ on $\mathbb{H}$.

\medskip
Let $a,b,x\in\mathbb{R}$, and let $t>0$ satisfy
\[
\frac{|x-a|}{|x-b|}\leq t.
\] 
By symmetry, it suffices to consider the case $a<x<b$. The other cases reduce to this by relabeling and replacing $t$
by $t-1$ or $t/(1-t)$, as appropriate. Assume that
\[
a<x<b\quad\text{and}\quad|x-b|=h \leq \delta_0.
\]

We first estimate $G'$ throughout the rectangle $R=[a,b]\times(0,h]$. Let $z=\xi+iu\in R$. 
Joining $w=x+ih$ to $z$ along a path consisting of a horizontal segment followed by a vertical segment, we obtain
\[
\log G'(\xi+iu)-\log G'(x+ih)=\int_x^\xi \frac{G''}{G'}(s+ih)\,ds+i\int_h^u \frac{G''}{G'}(\xi+iv)\,dv .
\]
Hence
\begin{align}\label{logG'_control}|\log G'(\xi+iu)-\log G'(x+ih)|&\leq\int_x^\xi \frac{\varepsilon}{h}\,ds+\int_u^h \frac{\varepsilon}{v}\,dv = \varepsilon\frac{|\xi-x|}{h}+\varepsilon\log\frac{h}{u}.
\end{align}
Therefore,
\begin{equation}\label{eq:Gprime_control}
\left|\frac{G'(\xi+iu)}{G'(x+ih)}\right|\leq e^{\frac{|\xi-x|}{h}\varepsilon}\left(\frac{h}{u}\right)^\varepsilon.
\end{equation}

We now estimate $g(y)-g(x)$ for arbitrary $x,y\in [a,b]$. 
For $0<h_0<h$, applying Cauchy's theorem to the rectangle $[x,y]\times[h_0,h]$ gives
\begin{equation}\label{eq:rectangle_eta}
 \begin{aligned}
 G(y+i h_0)-G(x+i h_0)=&\int_x^y G'(s+ih)\,ds+i\int_{h_0}^h G'(x+iu)\,du \\&-i\int_{h_0}^h G'(y+iu)\,du .
 \end{aligned}
\end{equation}

By the inequality \eqref{eq:Gprime_control}, 
\[
|G'(x+iu)|\leq|G'(x+ih)|\left(\frac{h}{u}\right)^\varepsilon ,\qquad u\in (0,h]
\]
and similarly 
\[
|G'(y+iu)|\leq|G'(x+ih)|e^{\frac{|y-x|}{h}\varepsilon}\left(\frac{h}{u}\right)^\varepsilon ,\qquad u\in (0,h].
\]
Since $\varepsilon<1/10$, it follows that $u^{-\varepsilon}\in L^1([0,h])$. 
Thus the vertical integrals converge absolutely as $h_0\to 0_+$.

Since $G$ extends continuously to $\mathbb{R}$ with the boundary value $g$, letting $h_0\to0_+$ in \eqref{eq:rectangle_eta} yields
\begin{equation}\label{eq:boundary_increment}
 \begin{aligned}
g(y)-g(x)&=\int_{\Gamma_{x,y}}G'(z)dz,
\end{aligned}
\end{equation}
where $\Gamma_{x,y}$ is the polygonal path
\[
x\to x+ih\to y+ih\to y .
\]

Define
\[
\theta(z)\coloneqq\frac{G'(z)}{G'(x + ih)}-1=e^{\log G'(z)-\log G'(x + ih)}-1, \qquad z=\xi+iu\in R .
\]
Together with the inequality $|e^z-1|\leq e^{|z|}-1$ and~\eqref{logG'_control}, this implies
\begin{equation}\label{theta_z_est}
 |\theta(z)|\leq e^{\tfrac{|\xi-x|}{h}\varepsilon}\left(\frac{h}{u}\right)^\varepsilon-1 .
\end{equation}

Substituting
\[
G'(z)=G'(x + ih)(1+\theta(z))
\]
into \eqref{eq:boundary_increment}, we obtain
\begin{equation}\label{gy-gx}
 \begin{aligned}
 g(y)-g(x)&=G'(x + ih)\left(\int_{\Gamma_{x,y}} dz+\int_{\Gamma_{x,y}}\theta(z)\,dz\right)\\
 &=G'(x + ih)\bigl((y-x)+E(x,y)\bigr),
 \end{aligned}
\end{equation}
where
\[
E(x,y)=\int_{\Gamma_{x,y}}\theta(z)\,dz .
\]

The error term consists of contributions from horizontal and vertical segments. Using the estimate~\eqref{theta_z_est} for the error term $E(x,y)$, and estimating the horizontal segment and the two vertical segments separately, it follows that
\begin{equation}\label{eq:error_estimate}
|E(x,y)|\leq |y-x|\left( e^{\tfrac{|y-x|}{h}\varepsilon}-1\right)+h\left(
\frac{e^{\tfrac{|y-x|}{h}\varepsilon}+1}{1-\varepsilon}-2\right).
\end{equation}

Applying \eqref{gy-gx} to the pairs $(x,a)$ and $(x,b)$, we obtain
\begin{equation}\label{gx-ga_gx-gb}
 \begin{aligned}
|g(x)-g(a)|&\leq|G'(x + ih)|\left(|x-a|+|E(x,a)|\right), \\
|g(x)-g(b)|&\geq|G'(x + ih)|\left(|x-b|-|E(x,b)|\right).
\end{aligned}
\end{equation}

Since $|x-a|\leq t|x-b|$ and $|x-b|= h$, it follows from \eqref{eq:error_estimate} that
\begin{equation}\label{Exa_Exb}
 \begin{aligned}
|E(x,a)|&\leq th(e^{t\varepsilon}-1)+h\left(\tfrac{e^{t\varepsilon}+1}{1-\varepsilon}-2\right)\leq h\left(t(e^{t\varepsilon}-1)+(e^{t\varepsilon}+1)(1+2\varepsilon)-2\right), \\
|E(x,b)|&\leq h\left((e^{\varepsilon}-1)+\left(\tfrac{e^{\varepsilon}+1}{1-\varepsilon}-2\right)\right)\leq h\left(2\varepsilon+\frac{1+2\varepsilon+1}{1-\varepsilon}-2\right)\leq 5h\varepsilon.
\end{aligned}
\end{equation}

Combining the estimates~\eqref{gx-ga_gx-gb} and~\eqref{Exa_Exb}, we obtain
\[
\frac{|g(x)-g(a)|}{|g(x)-g(b)|}\leq\frac{te^{t\varepsilon}+(e^{t\varepsilon}+1)(1+2\varepsilon)-2}{1-5\varepsilon}.
\]
Fix $t$ and $\eta>0$. Choose $\varepsilon=\min\{\tfrac{\eta t}{20(1+t^2)}, \tfrac{1}{30(1+t)}\}$ so that
\begin{equation*}
 \begin{aligned}
 &\frac{te^{t\varepsilon}+(e^{t\varepsilon}+1)(1+2\varepsilon)-2}{1-5\varepsilon}=\frac{t+(t+2\varepsilon+1)(e^{t\varepsilon}-1)+4\varepsilon}{1-5\varepsilon}\\
\leq& (1+6\varepsilon)t+2(2t\varepsilon(t+2)+4\varepsilon)= t+\varepsilon(4t^2+14t+8)\\
\leq& t+\tfrac{\eta t(4t^2+14t+8)}{20(1+t^2)}\leq(1+\eta)t .
 \end{aligned}
\end{equation*}
For this $\varepsilon$, we choose $\delta_0>0$ such that
\[
(\operatorname{Im} z )\left|\frac{G''}{G'}(z)\right|\leq \varepsilon
\]
whenever $\operatorname{Im} z<\delta_0$.

Therefore, for every $\eta>0$ and $t>0$, there exists $\delta_0>0$ such that whenever $|x-b|\leq\delta_0$ and $|x-a|\leq t|x-b|$ with $a<x<b$, we have
\begin{align}\label{verify_g}
 \frac{|x-a|}{|x-b|}\leq t\implies\frac{|g(x)-g(a)|}{|g(x)-g(b)|}\leq(1+\eta)t.
\end{align}
Consequently, if $a<x<b$ and the points $a,x,b$ lie in a ball of radius $\delta=\delta_0/2$ then \eqref{verify_g} holds. The other cases are similar. Hence $g$ is asymptotically symmetric.
\end{proof}

\begin{remark}
Theorem \ref{B_0->AS} is an immediate consequence of Theorem \ref{import} and Theorem \ref{BY2}.
In the previous proof of Theorem \ref{B_0->AS}, we follow the original idea by Pommerenke \cite{Po78}, which proves the asymptotic conformality of $\Gamma$ by examining the Visser--Ostrowski quotient; this method will be revisited in the proof of Theorem \ref{ASmooth->VMOA}. 
In contrast, the proof of Theorem \ref{import} provided above offers a more direct approach to this result.
\end{remark}

With the preceding preparations, we can now give the proof of Theorem~\ref{thm:intro-little-bloch}, which is the half-plane version of Pommerenke's theorem.

\begin{proof}[Proof of Theorem~\ref{thm:intro-little-bloch}]
Implication $(3) \Rightarrow (2)$ is proved by Theorem \ref{BY2}, and $(2) \Rightarrow (1)$ is proved by Theorem \ref{newformilation}.
Finally, $(1) \Rightarrow (3)$ is proved by Theorem \ref{import}.
\end{proof}

Combining Theorem~\ref{thm:intro-little-bloch} and \cite[Theorem 6.2]{Sh22}, we obtain the following corollary, which provides equivalent characterizations of the little universal Teichm\"uller space (see \cite{Ca,GS}):
\begin{corollary}
Let $\Gamma$ be a quasicircle passing through $\infty$, and let $\Omega$ and $\Omega^*$ denote its complementary components. 
Let $G\colon \mathbb{H}\to\Omega$ and $G^*\colon \mathbb{H}^*\to\Omega^*$ be conformal mappings fixing $\infty$, and let $g\colon \mathbb{R}\to\Gamma$ and $g^*\colon \mathbb{R}\to\Gamma$ be their boundary extensions, respectively.
Define the conformal welding $h\colon \mathbb{R}\to \mathbb{R}$ associated with $\Gamma$ by
\[h\coloneqq g^{-1}\circ g^*.\]
Then the following statements are equivalent:
\begin{enumerate}
  \item $\log G'\in B_0(\mathbb{H})$;
  \item $\Gamma$ is an asymptotically conformal quasicircle relative to $g\colon \mathbb{R}\to\Gamma$;
  \item $g\colon \mathbb{R}\to\mathbb{C}$ is an asymptotically symmetric embedding with image $\Gamma$;
  \item $G$ can be extended to a quasiconformal mapping in the complex plane $\mathbb{C}$ whose complex dilatation $\mu_G(z)= \frac{G_{\bar z}}{G_z}(z)$ satisfies $\frac{G_{\bar z}}{G_z}(x+iy) \to 0$ as $y\to 0_-$ uniformly for $x\in \mathbb{R}$;
  \item $h$ is a symmetric homeomorphism, namely, $h$ is quasisymmetric and
  \[\lim_{t\to 0}\frac{h(x+t)-h(x)}{h(x)-h(x-t)}=1\]
  uniformly for $x\in \mathbb{R}$; and
  \item $|S_{G}(z)|y^2\to 0$ as $y\to 0_+$, where $S_{G}$ is the Schwarzian derivative of $G$.
  \end{enumerate}
  \end{corollary}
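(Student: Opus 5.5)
The plan is to chain the equivalences $(1)\Leftrightarrow(2)\Leftrightarrow(3)$ from Theorem~\ref{thm:intro-little-bloch} with the two external equivalences $(1)\Leftrightarrow(4)$, $(4)\Leftrightarrow(5)$, $(4)\Leftrightarrow(6)$. These are supplied by \cite[Theorem 6.2]{Sh22}, together with the half-plane Becker--Pommerenke theorem \cite[Theorem 5.1]{MW} already used in Proposition~\ref{BYmain}. The first three conditions are exactly those of Theorem~\ref{thm:intro-little-bloch}, so their mutual equivalence needs no further argument. It remains only to attach $(4)$, $(5)$, $(6)$ to condition $(1)$.

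For $(1)\Rightarrow(4)$, I will invoke the half-plane Becker--Pommerenke extension. If $\log G'\in B_0(\mathbb{H})$, then $G$ admits a quasiconformal extension to $\mathbb{C}$ whose dilatation on $\mathbb{H}^*$ is given explicitly in terms of $(\operatorname{Im} z)(\log G')'$ at the reflected point. This dilatation therefore tends to $0$ uniformly as $y\to0_-$; such an extension may require first composing with a suitable affine map, which does not change dilatations.

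For $(4)\Rightarrow(1)$ and the links with $(5)$ and $(6)$, I will quote \cite[Theorem 6.2]{Sh22}. That result characterizes the little (asymptotic) universal Teichm\"uller space on the half-plane in several equivalent ways: by the asymptotically conformal extension, by symmetry of the welding $h$, and by vanishing of $|S_G(z)|y^2$ at the boundary. The one thing to check is normalization. The welding $h=g^{-1}\circ g^*$ is unchanged if $G,G^*$ are post-composed by the same affine map, and all the conditions are affine invariant. Hence the normalizations $G(\infty)=G^*(\infty)=\infty$ are consistent with those of \cite{Sh22}.

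The main obstacle is that the link between $(6)$ and $(1)$ goes through the Schwarzian rather than the pre-Schwarzian. $B_0$ of $\log G'$ implies the Schwarzian condition directly, since $S_G=(\log G')''-\tfrac12((\log G')')^2$ and Cauchy estimates on disks of radius $y/2$ give $y^2|(\log G')''|\to0$. The converse needs the Teichm\"uller-theoretic input that $(6)$ gives $(4)$, via Ahlfors--Weill for small $y$ localized to the boundary, as in \cite{Sh22}. Then $(4)\Rightarrow(1)$ follows from a standard estimate of the pre-Schwarzian by a dilatation vanishing near $\mathbb{R}$. If Sh22's statement is phrased only for $S_G$, I would prove $(4)\Rightarrow(1)$ directly. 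I would split $\mu$ into a part supported within height $\delta$ of $\mathbb{R}$, with small norm, and a remainder. I would then estimate the resulting variation of $(\log G')'$ near $\mathbb{R}$ by the usual integral representation of the pre-Schwarzian in terms of $\mu$.
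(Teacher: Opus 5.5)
Your route is the paper's route. Theorem~\ref{thm:intro-little-bloch} gives (1)--(3), and the rest is imported from \cite[Theorem 6.2]{Sh22} together with the half-plane Becker--Pommerenke theorem \cite[Theorem 5.1]{MW}. The gap is in the compatibility check you single out. Affine normalization is harmless; the \emph{direction} of the welding is not.

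Conditions (1), (4), (6) are uniform in the parameter of $G$. The symmetry condition in (5), by contrast, is uniform in the domain parameter of $h=g^{-1}\circ g^*$, which is the parameter of $g^*$. If $\Psi$ is a quasiconformal self-map of $\mathbb{H}^*$ extending $\psi$, then $G^*\circ\Psi$ extends $g^*\circ\psi$ with $\mu_{G^*\circ\Psi}=\mu_\Psi$. This is an extension of $G$ exactly when $\psi=(g^*)^{-1}\circ g=h^{-1}$. Combined with the Beurling--Ahlfors extension and a rescaling argument, this shows that (4) is equivalent to the symmetry of $h^{-1}$, not of $h$. On the circle the distinction is invisible because symmetric homeomorphisms form a group. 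On $\mathbb{R}$ with the uniform-in-$x$ definition they do not, which is exactly the non-compactness phenomenon the paper is about.

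Here is a counterexample to (1)$\Rightarrow$(5) as written. Let $\phi'(x)=(1+x^2)^{-1/4}(2+\sin x)$.
\begin{enumerate}
\item Since $\log\phi'$ is Lipschitz, $\phi$ is symmetric. Since $\phi'\,dx$ is doubling, $\phi$ is quasisymmetric.
\item For $x_k=2\pi k$, the arc $\phi([x_k,x_k+\pi])$ has length $s_k\asymp k^{-1/2}\to0$.
\item The interval $[x_k-t_k,x_k]$ with the same image length has $t_k\to t_*\approx4.67$, the root of $2t+\cos t=2\pi+3$. So the symmetry ratio of $\phi^{-1}$ at $\phi(x_k)$ and scale $s_k$ tends to $\pi/t_*\approx0.67$.
\item Let $\Psi$ be the reflected Beurling--Ahlfors extension of $\phi$ to $\mathbb{H}^*$; its dilatation tends to $0$ uniformly near $\mathbb{R}$. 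Solve the Beltrami equation with $\mu=\mu_\Psi$ on $\mathbb{H}^*$ and $\mu=0$ on $\mathbb{H}$ for $F$ fixing $\infty$. Put $G=F|_{\mathbb{H}}$ and $G^*=F\circ\Psi^{-1}$.
\item Then (4) holds, hence (1)--(3) and (6), but $g^{-1}\circ g^*=\phi^{-1}$ is not symmetric.
\end{enumerate}
So (4)$\Leftrightarrow$(5) cannot be obtained by quoting \cite{Sh22} after an affine check. You must match which side carries the conformal map there, and the correct condition here is that $h^{-1}=(g^*)^{-1}\circ g$ is symmetric. The paper's one-line proof rests on the same citation and needs the same correction.

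Two smaller points.
\begin{itemize}
\item In (1)$\Rightarrow$(4), composing with an affine map cannot help, since the Bloch seminorm is affine invariant. The explicit Becker-type dilatation $2i(\operatorname{Im}z)(\log G')'(\bar z)$ is only controlled in a strip near $\mathbb{R}$. The global extension is precisely what \cite[Theorem 5.1]{MW} supplies.
\item Your fallback for (4)$\Rightarrow$(1) by splitting $\mu$ is vague, since the pre-Schwarzian is not linear in $\mu$. A cleaner argument: rescale by $w\mapsto x_n+y_nw$, normalize by $y_nG'(x_n+iy_n)$, and use compactness of $K$-quasiconformal maps fixing $\infty$. The limit has zero dilatation, hence is affine, so $y_n(\log G')'(x_n+iy_n)\to0$. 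The same rescaling gives (6)$\Rightarrow$(1).
\end{itemize}
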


\section{Hausdorff dimension of asymptotically conformal curves}\label{sec:hausdorff_dimension_of_asymptotically_conformal_curves}
In this section, we study the geometric size of asymptotically conformal curves from the viewpoint of the Hausdorff dimension. 
It is well known that the Hausdorff dimension of quasicircles, is strictly less than $2$ but can be arbitrarily close to $2$ \cite[Theorem 1]{Smi10}. In contrast, the asymptotically conformal condition forces the Hausdorff dimension to be exactly $1$. In the bounded case, the following result appears as an exercise in \cite[Exercise 11.2.1, p.~250]{Pom}. For completeness, we provide a proof here and extend the conclusion to unbounded asymptotically conformal curves as well.

\begin{theorem}
Let $\Gamma$ be a bounded asymptotically conformal curve. Then the Hausdorff dimension of $\Gamma$ is equal to $1$, that is,
\[
\dim_H(\Gamma) = 1.
\]
\end{theorem}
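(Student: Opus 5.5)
Since $\Gamma$ is a Jordan curve, $\dim_H(\Gamma)\geq 1$ trivially (it contains a continuum of positive diameter, whose one-dimensional Hausdorff measure is at least its diameter). So the whole task is the upper bound $\dim_H(\Gamma)\leq 1$. The paper's introduction indicates the route: show that the conformal map is locally H\"older continuous up to the boundary with every exponent $\alpha<1$, and then use that a H\"older-$\alpha$ image of a one-dimensional set has dimension at most $1/\alpha$.

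Let $F\colon\mathbb{D}\to\Omega$ be a Riemann map onto the bounded interior domain of $\Gamma$. By Pommerenke's theorem \cite[Theorem 1]{Po78}, the asymptotic conformality of $\Gamma$ gives $\log F'\in B_0(\mathbb{D})$. Fix $\varepsilon>0$ and choose $r_0<1$ such that $(1-|z|^2)|F''(z)/F'(z)|\leq\varepsilon$ for $|z|\geq r_0$. Integrating along the radius from $r_0\zeta$ to $r\zeta$, with $|\zeta|=1$, gives
\[
|\log F'(r\zeta)-\log F'(r_0\zeta)|\leq \int_{r_0}^{r}\frac{\varepsilon}{1-s^2}\,ds\leq \frac{\varepsilon}{2}\log\frac{2}{1-r}+C.
\]
Since $|F'|$ is bounded on $|z|\leq r_0$, this yields
\[
|F'(z)|\leq C_\varepsilon (1-|z|)^{-\varepsilon}\quad\text{on } \mathbb{D}.
\]
By the standard Hardy--Littlewood argument (integrate $F'$ radially inward to depth $|z_1-z_2|$, across, and back out, exactly as in the polygonal-path estimate in the proof of Theorem~\ref{import}), $F$ extends to $\overline{\mathbb{D}}$ with
\[
|F(z_1)-F(z_2)|\leq C'_\varepsilon |z_1-z_2|^{1-\varepsilon}.
\]

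Now cover $\partial\mathbb{D}$ by $N$ arcs of length $2\pi/N$. Their images cover $\Gamma$ by $N$ sets of diameter at most $C N^{-(1-\varepsilon)}$. Hence the $s$-dimensional Hausdorff content with $s=1/(1-\varepsilon)$ is bounded by $N\cdot C^s N^{-1}$, which stays bounded; for any larger $s$ it tends to $0$. Therefore $\dim_H(\Gamma)\leq 1/(1-\varepsilon)$ for every $\varepsilon>0$, and so $\dim_H(\Gamma)=1$.

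The main obstacle is the uniform H\"older extension. One must check that the integral bound is uniform over the whole circle, which holds because the $B_0$ condition is uniform. One must also handle the boundary limit, using the continuity of $F$ on $\overline{\mathbb{D}}$ from Carath\'eodory's theorem and the integrability of $(1-s)^{-\varepsilon}$. Everything else is routine.
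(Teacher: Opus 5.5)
Your proposal is correct and follows the paper's route: Pommerenke's theorem gives $\log F'\in B_0(\mathbb{D})$; radial integration gives $|F'(z)|\le C(1-|z|)^{-\varepsilon}$; a path that goes inward to depth $|z_1-z_2|$, across, and back out gives global H\"older continuity for every exponent $1-\varepsilon<1$; and the H\"older bound then gives $\dim_H\Gamma\le 1/(1-\varepsilon)$. The differences are cosmetic: you prove the dimension bound by directly covering $\partial\mathbb{D}$ with $N$ arcs instead of citing Falconer, and you get $\dim_H\Gamma\ge 1$ from $\mathcal{H}^1(E)\ge\operatorname{diam}E$ for continua instead of from topological dimension.
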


\begin{proof}
Let $F$ be a conformal mapping from the unit disk $\mathbb{D}$ onto the inner domain $\Omega$ of $\Gamma$. 
Since $\Gamma$ is asymptotically conformal, \cite[Theorem 1]{Po78} guarantees that $\log F' \in B_0(\mathbb{D})$, meaning that 
\[
(1 - |z|^2)\frac{F''(z)}{F'(z)} \to 0 \quad \text{as } |z| \to 1_-.
\]
Hence, for any given $\varepsilon > 0$, there exists some $\delta > 0$ such that
\begin{equation}\label{eq:fpp_est}
\left|\frac{F''(z)}{F'(z)}\right| < \frac{\varepsilon}{1 - |z|^2}, \quad \text{whenever } |z| > 1 - \delta.
\end{equation}


Fix such a constant $\delta$, and consider the closed disk $\overline{D_{1-\delta}}$, where $D_r \coloneqq \{ z \in \mathbb{C} : |z| < r\}$
for $r \in (0,1)$.
Since $F'$ is continuous and $\overline{D_{1-\delta}}$ is compact, there exists a constant $M_\delta$ (depending on $\delta$) such that
\begin{equation}\label{F'_in_}
|F'(z)| \leq M_\delta, \quad \text{for all } z \in \overline{D_{1-\delta}}.
\end{equation}

For any $z \in \mathbb{D} \setminus D_{1-\delta}$, define its radial projection onto the closed disk as $z' \coloneqq \tfrac{1-\delta}{|z|} z$. 
Integrating $F''/F'$ along the segment $[z', z]$ yields
\[
\int_{z'}^{z} \frac{F''(\zeta)}{F'(\zeta)} \, d\zeta = \log F'(z) - \log F'(z').
\]
Taking the real part of both sides and applying \eqref{eq:fpp_est}, we obtain
\begin{align*}
\log |F'(z)| - \log |F'(z')| 
&\leq \int_{|z'|}^{|z|} \left|\frac{F''(\zeta)}{F'(\zeta)}\right| \, dr \leq \int_{0}^{|z|} \frac{\varepsilon}{1 - r} \, dr = \varepsilon \log \frac{1}{1 - |z|}.
\end{align*}
Exponentiating this inequality yields
\[
|F'(z)| \leq \frac{|F'(z')|}{(1 - |z|)^\varepsilon}.
\]
Combining this with \eqref{F'_in_} provides a global estimate:
\begin{equation}\label{eq:fp_growth_d}
|F'(z)| \leq \frac{M_\delta}{(1 - |z|)^\varepsilon}, \quad \text{for all } z \in \mathbb{D}.
\end{equation}

For $z_1, z_2 \in \overline{D_r}$, the segment $[z_1,z_2]$ lies entirely in $\overline{D_r}$. For any $\zeta\in [z_1,z_2]$, we have $1-|\zeta| \geq 1-r$. A direct application of the global bound \eqref{eq:fp_growth_d} gives
\begin{equation}\label{in_D_r}
|F(z_1) - F(z_2)| \leq \int_{[z_1,z_2]}\frac{M_\delta}{(1 - |\zeta|)^\varepsilon}|d\zeta| \leq  M_\delta \frac{|z_1 - z_2|}{(1-r)^{\varepsilon}}, \quad z_1, z_2 \in \overline{D_r}.
\end{equation}

We show that $F$ is globally $\alpha$-H\"older continuous in $\mathbb{D}$ for every $\alpha \in (0,1)$. Fix $\alpha \in (0,1)$ 
and choose $\varepsilon = 1 - \alpha$.

For $z_1, z_2 \in \mathbb{D}$ that lie on a common radial ray with $|z_1| \leq |z_2|$, 
integrating the global bound \eqref{eq:fp_growth_d} along this radial segment $[z_1,z_2]$ by $|d\zeta| = dr$, we have 
\begin{equation}\label{in_ray}
\begin{aligned}
|F(z_1) - F(z_2)| &\leq \int_{|z_1|}^{|z_2|} \frac{M_\delta}{(1 - r)^\varepsilon} \, dr = \frac{M_\delta}{1 - \varepsilon} \left( (1 - |z_1|)^{1-\varepsilon} - (1 - |z_2|)^{1-\varepsilon} \right)\\
&\leq \frac{M_\delta}{1 - \varepsilon} \big(|z_2|-|z_1|\big)^{1-\varepsilon} = \frac{M_\delta}{\alpha}|z_1-z_2|^\alpha.
\end{aligned}
\end{equation}

\medskip

Now consider arbitrary $z_1, z_2 \in \mathbb{D}$. Since $\Gamma$ is a bounded curve, $F$ is bounded; there is some $M>0$
such that $|F(z)| \leq M$ for all $z \in \mathbb{D}$. 
If $|z_1 - z_2| \geq \delta$, the $\alpha$-H\"older condition holds trivially, since
\[
|F(z_1) - F(z_2)| \leq 2M = \frac{2M}{\delta^\alpha} \delta^\alpha \leq \frac{2M}{\delta^\alpha} |z_1 - z_2|^\alpha.
\]
Thus, we may henceforth assume that $|z_1 - z_2| < \delta$.

Define the radius $r_0 \coloneqq 1 - |z_1 - z_2|$. Since $|z_1 - z_2|< \delta$, the relation $r_0 > 1 - \delta$ follows immediately. Define the radial projections of $z_1$ and $z_2$ onto the closed disk $\overline{D_{r_0}}$ as follows:
\[
z_j' \coloneqq 
\begin{cases} 
z_j, & \text{if } |z_j| \leq r_0, \\
\frac{r_0}{|z_j|} z_j, & \text{if } |z_j| > r_0,
\end{cases}
\qquad j=1, 2.
\]
Connect $z_1$ to $z_2$ via a path consisting of at most three segments: the radial segment $[z_1, z_1']$, the straight line segment $[z_1', z_2']$, and the radial segment $[z_2', z_2]$.

First, consider the radial segments. If $z_j \neq z_j'$, the points $z_j$ and $z_j'$ lie on a common ray, and 
\[|z_j - z_j'| = |z_j| - r_0 < 1 - r_0 = |z_1-z_2|.\]
If $z_j = z_j'$, $|F(z_j) - F(z_j')|$ is trivially zero. Applying \eqref{in_ray} yields
\[
|F(z_j) - F(z_j')| \leq \frac{M_\delta}{\alpha} |z_j - z_j'|^\alpha \leq \frac{M_\delta}{\alpha}|z_1-z_2|^\alpha, \quad j=1, 2.
\]

Next, consider the intermediate segment $[z_1', z_2']$, which lies in the closed disk $\overline{D_{r_0}}$. Since the radial projection defined above is a contraction in the distance, 
it follows that $|z_1' - z_2'| \leq |z_1 - z_2|$.
Applying the inequality \eqref{in_D_r} gives
\[
|F(z_1') - F(z_2')| \leq  M_\delta \frac{|z_1' - z_2'|}{(1-r_0)^{\varepsilon}} = M_\delta \frac{|z_1' - z_2'|}{|z_1-z_2|^{\varepsilon}} \leq M_\delta|z_1-z_2|^{1-\varepsilon} = M_\delta|z_1-z_2|^\alpha.
\]

Summing the estimates for these segments via the triangle inequality yields
\[
\begin{aligned}
|F(z_1) - F(z_2)| &\leq |F(z_1) - F(z_1')| + |F(z_1') - F(z_2')| + |F(z_2') - F(z_2)| \\
&\leq \left( \frac{2+\alpha}{\alpha} M_\delta \right) |z_1 - z_2|^\alpha.
\end{aligned}
\]

\medskip
Therefore, $F$ is globally $\alpha$-H\"older continuous in $\mathbb{D}$ for every $\alpha \in (0,1)$.

Finally, by a standard property of the Hausdorff dimension under H\"older mappings (see \cite[Proposition 2.3, p.~32]{Fa}), it follows that
\[
\dim_H \Gamma = \dim_H F(\partial \mathbb{D}) \leq \frac{1}{\alpha} \dim_H (\partial \mathbb{D}) = \frac{1}{\alpha}.
\]
Taking the limit as $\alpha \to 1_-$ gives $\dim_H \Gamma \leq 1$. On the other hand, since $\Gamma$ is a non-degenerate curve, its topological dimension implies $\dim_H \Gamma \geq 1$. Therefore, $\dim_H \Gamma = 1$.
\end{proof}

\begin{corollary}
Let $\Gamma$ be an unbounded asymptotically conformal quasicircle. Then its Hausdorff dimension satisfies
\[
\dim_H(\Gamma) = 1.
\]
\end{corollary}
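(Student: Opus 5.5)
The plan is to reduce the unbounded case to the bounded theorem just proved by localization, using Corollary~\ref{localization} and Proposition~\ref{localize_Gamma:label}. Since Hausdorff dimension is countably stable, it suffices to show that every bounded closed subarc of $\Gamma$ has dimension at most $1$. The lower bound $\dim_H\Gamma\ge 1$ is immediate, because $\Gamma$ contains nondegenerate arcs. We write $\Gamma=\bigcup_{k\in\mathbb{Z}} g([k,k+1])$, where $G\colon\mathbb{H}\to\Omega$ is the conformal map onto one complementary component with $G(\infty)=\infty$ and $g$ is its boundary extension. Countable stability then gives $\dim_H\Gamma=\sup_k \dim_H g([k,k+1])$.

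First, Corollary~\ref{localization} gives $\log G'\in B_0^{\mathrm{loc}}(\mathbb{H})$, since $\Gamma$ is asymptotically conformal. Fix an interval $I=[k,k+1]$. Proposition~\ref{localize_Gamma:label} provides the stadium map $T_I\colon\mathbb{D}\to D_I\subset\mathbb{H}$ with $\log(G\circ T_I)'\in B_0(\mathbb{D})$. The map $F_I=G\circ T_I$ sends $\mathbb{D}$ onto the bounded domain $\Omega_I=G(D_I)$, which is bounded because $\overline{D_I}$ is compact and $G$ extends continuously.

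Next we run the Hölder argument from the proof of the preceding theorem, now for $F_I$. The only input used there was the little Bloch condition \eqref{eq:fpp_est} for $\log F'$ and the boundedness of $F$. Both hold here, so $F_I$ is $\alpha$-Hölder on $\mathbb{D}$ for every $\alpha<1$, and hence on $\overline{\mathbb{D}}$ by continuity. Note that we do not need $\partial\Omega_I$ to be asymptotically conformal. We bypass Pommerenke's theorem entirely and use the $B_0(\mathbb{D})$ condition directly.

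Finally, $g(I)=F_I(\tilde I)$, where $\tilde I=T_I^{-1}(I)\subset\partial\mathbb{D}$. The Hölder property gives $\dim_H g(I)\le \dim_H(\tilde I)/\alpha\le 1/\alpha$, and letting $\alpha\to1$ gives $\dim_H g(I)\le 1$. Taking the supremum over $k$ yields $\dim_H\Gamma=1$.

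The main point to check is that the earlier Hölder proof really only used $\log F'\in B_0(\mathbb{D})$ and boundedness of $F$, not the geometry of $\partial F(\mathbb{D})$. Inspecting that proof, this is so. The one subtlety is that $M_\delta$ now depends on $I$, which is harmless because each arc is treated separately before countable stability is applied.
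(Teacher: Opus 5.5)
Your proposal is correct and follows the paper's route, which is exactly the localization through Corollary~\ref{localization} and Proposition~\ref{localize_Gamma:label}, combined with the local (countably stable) nature of Hausdorff dimension. Your only refinement is to apply the H\"older estimate directly to $F_I=G\circ T_I$ using $\log(G\circ T_I)'\in B_0(\mathbb{D})$, rather than first invoking Pommerenke's theorem to make $\partial\Omega_I$ asymptotically conformal; this is legitimate because that estimate uses only the little Bloch bound and the boundedness of $F_I$.
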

\begin{proof}
 The result follows from Corollary~\ref{localization} and Proposition~\ref{localize_Gamma:label}, combined with the fact that Hausdorff dimension is a local property.
\end{proof}

\section{Asymptotic smoothness and VMOA}\label{sec:asymptotic_smoothness_and_vmoa}

We extend the following result of Pommerenke \cite[Theorem 2]{Po78} to the non-compact setting:
For a conformal mapping $F$ of the unit disk $\mathbb{D}$ onto a bounded Jordan domain, the boundary curve $\partial F(\mathbb{D})$ is an asymptotically smooth quasicircle if and only if $\log F'$ is $\mathrm{VMOA}$ on $\mathbb{D}$.

We first recall the John--Nirenberg inequality in a local form.

\begin{theorem}[John--Nirenberg inequality]\label{J-N:label}
There exist universal constants $C_1,C_2>0$ such that for every interval $I_0\subset\mathbb{R}$, every $u\in \mathrm{BMO}(I_0)$, every interval $I\subset I_0$, and every $\lambda>0$,
\[
\frac{\left|\left\{x\in I:\ |u(x)-u_I|>\lambda\right\}\right|}{|I|}\leq C_1 \exp\!\left(-\frac{C_2\lambda}{\|u\|_{\mathrm{BMO}(I_0)}}\right).
\]
In particular, if $\|u\|_{\mathrm{BMO}(I_0)}<\varepsilon$, then
\[
\sup_{I\subset I_0}\,\frac1{|I|}\int_I e^{\beta |u-u_I|}\,dx\leq 1+\frac{C_1\beta \varepsilon}{C_2-\beta \varepsilon}
\]
for every $\beta<C_2/\varepsilon$. 
\end{theorem}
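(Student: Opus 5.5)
The plan is to derive this local form from the classical John--Nirenberg inequality on a single interval and then integrate the distribution estimate.

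\emph{Step 1 (the first inequality).} Fix $I\subset I_0$. The restriction of $u$ to $I$ lies in $\mathrm{BMO}(I)$, and $\|u\|_{\mathrm{BMO}(I)}\le\|u\|_{\mathrm{BMO}(I_0)}$, because the supremum defining the norm on $I$ runs over a subfamily of the intervals used for $I_0$. The classical John--Nirenberg inequality on a cube, applied with the cube $I$, gives universal constants $C_1,C_2$ with
\[
|\{x\in I: |u(x)-u_I|>\lambda\}|\le C_1|I|\exp\bigl(-C_2\lambda/\|u\|_{\mathrm{BMO}(I)}\bigr).
\]
Replacing $\|u\|_{\mathrm{BMO}(I)}$ by the larger $\|u\|_{\mathrm{BMO}(I_0)}$ makes the right-hand side larger, which is the stated bound. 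If one wants this to be self-contained, the classical inequality itself is proved by the Calder\'on--Zygmund stopping time on dyadic subintervals of $I$. That argument uses only the oscillation over subintervals of $I$, so it needs only the local norm.

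\emph{Step 2 (the exponential integral).} Set $v=|u-u_I|$ on $I$ and use the layer-cake formula:
\[
\frac1{|I|}\int_I e^{\beta v}\,dx=1+\frac{\beta}{|I|}\int_0^\infty e^{\beta\lambda}\,|\{v>\lambda\}|\,d\lambda .
\]
Insert the bound from Step 1 with $\|u\|_{\mathrm{BMO}(I_0)}<\varepsilon$, so that $|\{v>\lambda\}|\le C_1|I|e^{-C_2\lambda/\varepsilon}$. This bounds the integral by
\[
C_1\beta\int_0^\infty e^{(\beta-C_2/\varepsilon)\lambda}\,d\lambda=\frac{C_1\beta\varepsilon}{C_2-\beta\varepsilon},
\]
and the integral converges exactly when $\beta<C_2/\varepsilon$. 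Taking the supremum over $I\subset I_0$ gives the second assertion.

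The only point needing care is that Step 1 uses a norm depending on $I_0$ rather than on $\mathbb{R}$. This is handled by monotonicity of the norm under restriction and by the fact that the stopping-time proof never leaves $I$. The rest is a routine computation.
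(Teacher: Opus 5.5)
Your proposal is correct and follows the paper's proof. The paper likewise takes the distribution inequality as the classical John--Nirenberg theorem; your remark that $\|u\|_{\mathrm{BMO}(I)}\le\|u\|_{\mathrm{BMO}(I_0)}$ and that the stopping-time argument stays inside $I$ is a harmless extra justification. The exponential bound is then obtained by the same layer-cake integration, the only cosmetic difference being that you replace $\|u\|_{\mathrm{BMO}(I_0)}$ by $\varepsilon$ in the exponent before integrating, whereas the paper integrates first and then uses that $s\mapsto s/(C_2-\beta s)$ is increasing.
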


\begin{proof}
We verify only the latter statement. For any $I\subset I_0$, write
\[
\begin{aligned}
\frac1{|I|}\int_I e^{\beta |u-u_I|}\,dx&=1+\frac1{|I|}\int_0^\infty\beta e^{\beta\lambda}\left|\left\{x\in I:\ |u-u_I|>\lambda\right\}\right|\,d\lambda\\
&\leq 1+ C_1\beta\int_0^\infty\exp\!\left(\beta\lambda-\tfrac{C_2\lambda}{\|u\|_{\mathrm{BMO}(I_0)}}\right)\,d\lambda.\end{aligned}
\]
Since $\beta<\frac{C_2}{\varepsilon}<\frac{C_2}{\|u\|_{\mathrm{BMO}(I_0)}}$,
the integral converges, and hence
\begin{align}\label{est_beta}
 \frac1{|I|}\int_I e^{\beta |u-u_I|}\,dx\leq 1+\frac{C_1\beta \|u\|_{\mathrm{BMO}(I_0)}}{C_2-\beta \|u\|_{\mathrm{BMO}(I_0)}}\leq 1+\frac{C_1\beta \varepsilon}{C_2-\beta \varepsilon}.
\end{align}
\end{proof}
\begin{corollary}\label{J-N_cor:label}
Let $u\in \mathrm{VMO}(\mathbb{R})$. Then for every $\alpha>0$ and every $0<\varepsilon<\tfrac{C_2}{\alpha}$, there exists $\delta>0$ such that
\[
\sup_{|I|<\delta}\,\frac1{|I|}\int_I e^{\alpha |u(x)-u_I|}\,dx \leq 1+\frac{C_1\alpha \varepsilon}{C_2-\alpha \varepsilon}.
\]
\end{corollary}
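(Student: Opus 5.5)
The plan is to deduce Corollary~\ref{J-N_cor:label} directly from the local form of Theorem~\ref{J-N:label}, with the vanishing mean oscillation of $u$ used to make the local BMO norm small on short intervals.

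Fix $\alpha>0$ and $0<\varepsilon<C_2/\alpha$. Since $u\in\mathrm{VMO}(\mathbb{R})$, there is $\delta_1>0$ such that
\[
\frac{1}{|J|}\int_J |u-u_J|\,dx<\varepsilon \quad\text{whenever } |J|<\delta_1 .
\]
Take $\delta=\delta_1$ and let $I$ be any interval with $|I|<\delta$. I would apply Theorem~\ref{J-N:label} with $I_0=\overline{I}$. Every subinterval $J\subset I_0$ has $|J|\le |I|<\delta_1$, so
\[
\|u\|_{\mathrm{BMO}(I_0)}=\sup_{J\subset I_0}\frac1{|J|}\int_J|u-u_J|\,dx\le\varepsilon .
\]
If $I$ is open, the closure has the same length and $u_I$ is unchanged, so nothing is lost.

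Now use the second assertion of Theorem~\ref{J-N:label} with $\beta=\alpha$. The condition $\beta<C_2/\varepsilon$ is exactly our hypothesis $\alpha\varepsilon<C_2$. Taking the subinterval to be $I$ itself gives
\[
\frac1{|I|}\int_I e^{\alpha|u-u_I|}\,dx\le 1+\frac{C_1\alpha\varepsilon}{C_2-\alpha\varepsilon}.
\]
The bound is independent of $I$, so taking the supremum over $|I|<\delta$ yields the claim.

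The only delicate point is the borderline case $\|u\|_{\mathrm{BMO}(I_0)}=\varepsilon$, where the strict inequality ``$\|u\|_{\mathrm{BMO}(I_0)}<\varepsilon$'' in the theorem is not literally available. There are two ways to handle it:
\begin{itemize}
\item Use the intermediate estimate \eqref{est_beta}. It only needs $\beta\|u\|_{\mathrm{BMO}(I_0)}<C_2$, and its right-hand side is monotone increasing in the norm.
\item Alternatively, choose $\delta_1$ for a slightly smaller $\varepsilon'<\varepsilon$ and use monotonicity of $s\mapsto \frac{C_1\alpha s}{C_2-\alpha s}$ on $[0,C_2/\alpha)$.
\end{itemize}
Apart from this, the argument is routine. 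The essential observation is that, because $u\in\mathrm{VMO}(\mathbb{R})$, the BMO norm restricted to any short interval is uniformly small.
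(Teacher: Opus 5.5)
Your proposal is correct and follows essentially the same route as the paper: use the VMO condition to make $\|u\|_{\mathrm{BMO}(I)}$ small on every short interval $I$, then apply the exponential estimate \eqref{est_beta} from the John--Nirenberg theorem with $\beta=\alpha$ and $I_0=I$. Your treatment of the borderline case is a point the paper glosses over: a supremum of averages that are each below $\varepsilon$ is only $\le\varepsilon$, whereas the paper simply asserts $\|u\|_{\mathrm{BMO}(I)}<\varepsilon$. Either of your two fixes resolves it.
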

\begin{proof}
Fix $\alpha>0$. 
Choose $\varepsilon>0$ sufficiently small such that $\alpha<\tfrac{C_2}{\varepsilon}$. 
Since $u\in \mathrm{VMO}(\mathbb{R})$, there exists $\delta>0$ such that whenever $|I|<\delta$, $\|u\|_{\mathrm{BMO}(I)}<\varepsilon$. 
The conclusion now follows from the inequality~\eqref{est_beta}.
\end{proof}

We are now ready to prove the central result of this section, demonstrating that the $\mathrm{VMOA}$ condition implies asymptotic smoothness in the non-compact setting as well. 
Specifically, we will show that the boundary embedding $g\colon \mathbb{R} \to \Gamma$ satisfies condition 
\eqref{D_1}, thereby establishing its asymptotic smoothness.

\begin{theorem}\label{import_VMOA}
Let $G\colon \mathbb{H}\to \Omega$ be a conformal mapping such that
\[
\log G' \in \mathrm{VMOA}(\mathbb{H}).
\]
Let $g\colon \mathbb{R}\to \partial\Omega=\Gamma$ be the boundary extension of $G$.
Then $g$ is an asymptotically smooth embedding onto $\Gamma$.
\end{theorem}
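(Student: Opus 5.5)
Since $\mathrm{VMOA}(\mathbb{H})\subset B_0(\mathbb{H})$, the first step will be to record that $\log G'\in B_0(\mathbb{H})$. By Theorem~\ref{import}, $g$ is then an asymptotically symmetric embedding, which is the standing requirement in the definition of an asymptotically smooth embedding. It remains to show that $\Gamma$ is locally rectifiable and chord-arc, and that \eqref{D_1} holds.

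Write $u+iv$ for the boundary values of $\log G'$ on $\mathbb{R}$. Then $u,v\in\mathrm{VMO}(\mathbb{R})$, $G'$ has non-tangential limits $e^{u+iv}$ almost everywhere, and $e^{u}$ is locally integrable by Corollary~\ref{J-N_cor:label}. So $G'$ lies locally in the Hardy space $H^1$ of a neighbourhood of each boundary interval (one may localize with the squares or stadia of Section~\ref{sec:asymptotic_conformality_implies_little_bloch} if needed). Consequently $g$ is locally absolutely continuous with $g'=e^{u+iv}$ a.e. For $a<b$ this gives
\[
\ell(\wideparen{g(a)g(b)})=\int_a^b e^{u(x)}\,dx,\qquad g(b)-g(a)=\int_a^b e^{u(x)+iv(x)}\,dx .
\]

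For an interval $I=[a,b]$ with $|I|<\delta$, I will normalize by $c_I=e^{u_I+iv_I}$. The arc length satisfies
\[
\frac{1}{|I|}\int_I e^{u}\,dx\le e^{u_I}\,\frac{1}{|I|}\int_I e^{|u-u_I|}\,dx\le e^{u_I}(1+\eta),
\]
where $\eta$ is the small quantity supplied by Corollary~\ref{J-N_cor:label} with $\alpha=1$. For the chord, I will write
\[
\Bigl|\frac{g(b)-g(a)}{c_I|I|}-1\Bigr|\le\frac{1}{|I|}\int_I\bigl|e^{(u-u_I)+i(v-v_I)}-1\bigr|\,dx\le \frac{1}{|I|}\int_I e^{|u-u_I|+|v-v_I|}\,\bigl(|u-u_I|+|v-v_I|\bigr)\,dx .
\]
By Cauchy--Schwarz, this is bounded by $\bigl(\frac1{|I|}\int_I e^{2(|u-u_I|+|v-v_I|)}\bigr)^{1/2}\bigl(\frac1{|I|}\int_I(|u-u_I|+|v-v_I|)^2\bigr)^{1/2}$. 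The first factor stays bounded by Corollary~\ref{J-N_cor:label}, applied to $u$ and $v$ with one more Cauchy--Schwarz. The second factor tends to $0$ as $\delta\to0$ because of the vanishing $L^2$ mean oscillation, again by John--Nirenberg, using $\|u\|_{\mathrm{BMO}(I)}\to0$ uniformly for small $I$.

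Hence $|g(b)-g(a)|\ge e^{u_I}|I|(1-\eta')$, and therefore $\ell/|g(b)-g(a)|\le(1+\eta)/(1-\eta')\to1$ uniformly as $|b-a|\to0$. This is exactly \eqref{D_1}.

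For the global chord-arc property I will repeat the same computation with the full $\mathrm{BMO}$ norm. I expect this to be the main obstacle, since with only bounded oscillation the chord estimate above need not beat $1$. My first attempt will be to derive it from the conclusion already obtained via quasisymmetry. Given any $a<b$, subdivide $[a,b]$ into $N$ pieces of length below $\delta$. Then $\ell\le(1+\varepsilon)\sum|g(x_{k+1})-g(x_k)|$. I would bound this sum by a constant times $|g(b)-g(a)|$ using the quasisymmetry of $g$ together with the $A_\infty$-type doubling of $e^{u}\,dx$ that holds for $u\in\mathrm{BMO}$. If this fails, I will fall back on the standard fact that $\log G'\in\mathrm{BMOA}$ with small norm at small scales, combined with the Bishop--Jones/Jerison--Kenig type argument localized on intervals.
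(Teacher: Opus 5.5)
Your derivation of \eqref{D_1} is essentially the paper's proof. You normalize by $e^{u_I}$, bound $\frac{1}{|I|}\int_I|e^{u-u_I}-1|\,dx$ via $|e^z-1|\le|z|e^{|z|}$, Cauchy--Schwarz and John--Nirenberg, and then compare $\int_I|g'|$ with $|\int_I g'|$. The only difference is that the paper works with the complex-valued $u=\log g'$ directly rather than splitting it into real and imaginary parts, which changes nothing. Your remark that $\mathrm{VMOA}(\mathbb{H})\subset B_0(\mathbb{H})$, so that Theorem~\ref{import} gives the asymptotic symmetry of $g$, is correct and makes explicit something the paper leaves implicit. Note that the paper's proof stops at \eqref{D_1}, which it takes as establishing the asymptotic smoothness of $g$. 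It does not prove a global chord-arc property of $\Gamma$ inside this theorem.

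The extra step you propose for the global chord-arc property does not work as written, for two reasons.
\begin{enumerate}
\item The ingredient you cite, an $A_\infty$-type doubling of $e^{u}\,dx$ valid for every $u\in\mathrm{BMO}$, is false. John--Nirenberg gives $e^{\alpha u}\in A_2$ only when $\alpha\|u\|_{\mathrm{BMO}}$ is small, and $\mathrm{VMO}$ supplies smallness only on short intervals. For instance, $u(x)=-2\log(1+|x|)$ is Lipschitz and lies in $\mathrm{VMO}(\mathbb{R})$, yet $e^{u}\,dx$ is not even doubling.
\item The step is circular. Since $\sum_k|g(x_{k+1})-g(x_k)|\le\ell(\wideparen{g(a)g(b)})\le(1+\varepsilon)\sum_k|g(x_{k+1})-g(x_k)|$, the bound $\sum_k|g(x_{k+1})-g(x_k)|\le C|g(b)-g(a)|$ for all $a<b$ is, up to the factor $1+\varepsilon$, the chord-arc inequality itself. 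For a quasidisk with $\log G'\in\mathrm{BMOA}(\mathbb{H})$ this is equivalent to $e^{\operatorname{Re}u}\in A_\infty(\mathbb{R})$ at all scales. Quasisymmetry of $g$ cannot supply it, since snowflake-type quasicircles are quasisymmetric images of $\mathbb{R}$.
\end{enumerate}
Your fallback, small local $\mathrm{BMOA}$ norm plus a localized Jerison--Kenig argument, only controls arcs $g([a,b])$ with $|a-b|<\delta$. That is \eqref{D_1} again and gives nothing at large parameter scales. So this part remains a gap. Either restrict the claim to what the paper's argument actually proves (\eqref{D_1} together with asymptotic symmetry), or find a genuinely large-scale argument showing $e^{\operatorname{Re}u}\in A_\infty(\mathbb{R})$.
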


\begin{proof}
Fix $0<\varepsilon<1$. 
Since $\log G'\in \mathrm{VMOA}(\mathbb{H})$, the boundary function
\[
u(x)=\lim_{y\to0^+}\,\log G'(x+iy), \quad \text{a.e.}
\]
belongs to $\mathrm{VMO}(\mathbb{R})$. 
Moreover, the boundary extension $g$ is locally absolutely continuous with $g'(x)=e^{u(x)}$ for almost every $x\in\mathbb{R}$. 
Therefore, for every bounded interval $I=[a,b]\subset\mathbb{R}$, we have
\[
\ell(\wideparen{g(a)g(b)})
=
\int_I |g'(x)|\,dx
\]
and
\[
g(b)-g(a)=\int_I g'(x)\,dx .
\]

Let $C_1, C_2>0$ be the universal constants from the John--Nirenberg inequality (Theorem~\ref{J-N:label}). We define a universal constant $C=\tfrac{\sqrt{2C_1(1+C_1)}}{C_2}$ and choose 
\[
\eta=\min\left\{\tfrac{C_2}{4}, \tfrac{\varepsilon}{4C}\right\}.
\]
Since $u\in \mathrm{VMO}(\mathbb{R})$, for this specific $\eta>0$, there exists $\delta>0$ such that $\|u\|_{\mathrm{BMO}(I)}<\eta$ whenever $|I|<\delta$.

Fix an interval $I=[a,b]$ with $|I|<\delta$. 
Then $g'(x)=e^{u_I}e^{u(x)-u_I}$. 
Using the inequality $|e^z-1|\leq |z|e^{|z|}$ and the Cauchy--Schwarz inequality, we obtain
\begin{equation}\label{est_e-eI}
 \begin{aligned}
 \frac1{|I|}\int_I |e^{u(x)-u_I}-1|\,dx &\leq\frac1{|I|}\int_I |u(x)-u_I |e^{|u(x)-u_I |}\,dx\\
 &\leq \left(\frac1{|I|}\int_I |u(x)-u_I |^2\,dx\right)^{1/2}\left(\frac1{|I|}\int_I e^{2|u(x)-u_I |}\,dx\right)^{1/2}.
 \end{aligned}
\end{equation}

The John--Nirenberg inequality extends to complex-valued $\mathrm{BMO}$ functions by considering the real and imaginary parts separately. 
By Theorem~\ref{J-N:label}, it follows that
\begin{equation*}
 \begin{aligned}
\frac1{|I|}\int_I |u(x)-u_I|^2\,dx&=\frac{2}{|I|}\int_0^\infty t\,\left|\left\{x\in I:\ |u(x)-u_I|>t\right\}\right|\,dt\\
&\leq 2C_1 \int_0^\infty t\exp\!\left(-\frac{C_2 t}{\|u\|_{\mathrm{BMO}(I)}}\right)\,dt\\
&=2C_1\left(\frac{\|u\|_{\mathrm{BMO}(I)}}{C_2}\right)^2\int_0^\infty s e^{-s}\,ds\leq \frac{2C_1}{C_2^2}\eta^2.
\end{aligned}
\end{equation*}

Hence, we have the $L^2$ estimate
\begin{equation}\label{term_1}
 \left(\frac1{|I|}\int_I |u(x)-u_I|^2\,dx\right)^{1/2}\leq\frac{\sqrt{2C_1}}{C_2}\eta.
\end{equation}
Combining \eqref{est_e-eI}, \eqref{term_1} and Corollary~\ref{J-N_cor:label} yields
\begin{equation*}
 \begin{aligned}
 \frac1{|I|}\int_I |e^{u(x)-u_I}-1|\,dx \leq \frac{\sqrt{2C_1}}{C_2}\eta \left(1+\frac{2C_1\eta}{C_2-2\eta}\right)^{1/2}\leq C\eta \leq \frac{\varepsilon}{4}.
 \end{aligned}
\end{equation*}

Therefore, the arc length is bounded by
\begin{equation}\label{est_length}
\begin{aligned}
 \ell(\wideparen{g(a)g(b)})&=|e^{u_I}|\int_I |1+(e^{u(x)-u_I}-1)|\,dx\leq |e^{u_I}|\left(|I|+\int_I |e^{u(x)-u_I}-1|\,dx\right)\\
 &\leq |e^{u_I}||I|\left(1+\tfrac{\varepsilon}{4}\right).
\end{aligned}
\end{equation}
On the other hand, the chord length is bounded from below by
\begin{equation}\label{est_chord}
 \begin{aligned}
 |g(b)-g(a)|&=|e^{u_I}|\left|\int_I (1+(e^{u(x)-u_I}-1))\,dx\right|\geq |e^{u_I}|\left(|I|-\int_I |e^{u(x)-u_I}-1|\,dx\right)\\
 &\geq |e^{u_I}||I|\left(1-\tfrac{\varepsilon}{4}\right).
 \end{aligned}
\end{equation}

Consequently, by \eqref{est_length} and \eqref{est_chord}, for any $0<\varepsilon<1$, there exists $\delta>0$ such that whenever $|a-b|<\delta$, we have
\[
\frac{\ell(\wideparen{g(a)g(b)})}{|g(a)-g(b)|}\leq \frac{1+\varepsilon/4}{1-\varepsilon/4}<1+\varepsilon.
\]
Therefore, $g$ is asymptotically smooth.
\end{proof}

By Theorem~\ref{import_VMOA} together with the definition of asymptotically smooth chord-arc curves, we immediately obtain the following consequence.

\begin{corollary}\label{VMOA->ASmooth}
Assume that $g^{-1}$ is uniformly continuous.
If $\log G' \in \mathrm{VMOA}(\mathbb{H})$ then $\Gamma$ is an asymptotically smooth chord-arc curve. 
\end{corollary}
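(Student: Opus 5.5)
The statement is Corollary~\ref{VMOA->ASmooth}. Two things must be shown: $\Gamma$ is chord-arc, and $\ell(\wideparen{ab})/|a-b|\to1$ uniformly as $|a-b|\to0$ for $a,b\in\Gamma$.

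\emph{Step 1: the small-scale ratio.} Theorem~\ref{import_VMOA} gives condition \eqref{D_1} for $g$. For every $0<\varepsilon<1$ there is $\delta>0$ such that $|s-t|<\delta$ implies
\[
\ell(\wideparen{g(s)g(t)})\leq(1+\varepsilon)|g(s)-g(t)|.
\]
Since $g^{-1}$ is uniformly continuous, there is $\delta'>0$ such that, for $a,b\in\Gamma$, $|a-b|<\delta'$ implies $|g^{-1}(a)-g^{-1}(b)|<\delta$. Combining the two, $|a-b|<\delta'$ gives $\ell(\wideparen{ab})/|a-b|\leq1+\varepsilon$. This is exactly asymptotic smoothness at small scales, provided $\Gamma$ is already known to be chord-arc.

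\emph{Step 2: the chord-arc property.} I see two routes and would use the first.
\begin{enumerate}
\item Since $\mathrm{VMOA}\subset\mathrm{BMOA}$ and the $\mathrm{VMO}$ condition makes $\|u\|_{\mathrm{BMO}}$ finite, it is tempting to invoke the known characterization that, for a quasidisk $\Omega$, $\log G'\in\mathrm{BMOA}(\mathbb{H})$ implies chord-arc. That is false without smallness of the norm, so this shortcut alone does not suffice.
\item Instead I would use the small-scale estimate and treat large scales separately. The John--Nirenberg estimate in the proof of Theorem~\ref{import_VMOA} shows that, on every interval $I$ with $|I|<\delta$, the weight $|g'|=e^{\operatorname{Re}u}$ satisfies a reverse H\"older / $A_\infty$-type inequality with uniform constants. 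Quasisymmetry of $g$ and doubling then propagate this from short intervals to all intervals $[s,t]$. The plan is to cover $[s,t]$ by consecutive intervals of length about $\delta/2$; each piece has arc length comparable to its chord $|g'|_{J}|J|$ with $|g'|_J\approx e^{\operatorname{Re} u_J}$. The quasisymmetric gauge $\eta$ controls how far $g$ can travel over many such pieces relative to the total chord $|g(s)-g(t)|$, using the quasicircle (three-point) condition to compare sums of adjacent chords with the chord of the union.
\end{enumerate}

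I expect this large-scale comparison to be the main obstacle. If it cannot be made uniform, I would fall back on the interpretation of ``chord-arc'' implicit in \eqref{D_1}, namely a uniform bound only at small scales. The large-scale bound would then come from the $\mathrm{BMOA}$ hypothesis together with the standard fact (e.g.\ via \cite{Pom}) that a rectifiable quasicircle whose $\log G'$ lies in $\mathrm{BMOA}$ is chord-arc. To use that fact, I would check that local rectifiability, guaranteed by the absolute continuity of $g$ noted in Theorem~\ref{import_VMOA}, is enough.
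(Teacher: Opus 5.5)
Your Step~1 is the whole of the paper's argument. The paper's proof of this corollary combines \eqref{D_1} from Theorem~\ref{import_VMOA} with the uniform continuity of $g^{-1}$, exactly as you do. The paper does not prove the chord-arc property separately. It reads it off from the statement of Theorem~\ref{import_VMOA}, because in its terminology an asymptotically smooth embedding is by definition one whose image is a chord-arc curve. You are right that the displayed proof of Theorem~\ref{import_VMOA} only checks \eqref{D_1}. Once you decline to take the global bound from that statement, you need a genuinely large-scale argument, and Step~2 does not provide one.

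Route~(2) fails at the propagation step, because bounded turning and the gauge $\eta$ control diameters of subarcs, not sums of chords. Cut $[s,t]$ into consecutive pieces $[t_{k-1},t_k]$. Quasisymmetry bounds each $|g(t_k)-g(t_{k-1})|$ by a fixed multiple of $|g(s)-g(t)|$. It gives no bound on $\sum_k|g(t_k)-g(t_{k-1})|$ by a fixed multiple of $|g(s)-g(t)|$. For instance, on an unbounded von Koch-type quasicircle, the chords of the unit-diameter subarcs of an arc of diameter $R$ add up to about $R^{\log 4/\log 3}$. That growth is precisely the failure of the chord-arc condition.

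In analytic terms, for a quasidisk one has $|g(b)-g(a)|\asymp|I|\,|G'(a+i|I|)|$ for $I=[a,b]$. Given $\log g'\in\mathrm{BMO}$, the chord-arc condition therefore amounts to the reverse Jensen inequality $\frac1{|I|}\int_I|g'|\,dx\le C\exp\bigl(\frac1{|I|}\int_I\log|g'|\,dx\bigr)$ for all $I$, i.e.\ $|g'|\in A_\infty(\mathbb{R})$. Uniform $A_\infty$ bounds on intervals shorter than $\delta$ say nothing about long intervals.

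Neither hypothesis supplies the missing large-scale bound. The VMO condition concerns only short intervals. By Lemma~\ref{uc2}, uniform continuity of $g^{-1}$ only bounds $|G'(x+iy)|$ from below for $y\ge\varepsilon$. That does not stop the arithmetic mean of $|g'|$ over long intervals from exceeding its geometric mean by an unbounded factor.

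Your fallback does not close the gap either. Restricting ``chord-arc'' to small scales changes the statement, since the paper's definition is a global bound. The implication ``locally rectifiable quasicircle with $\log G'\in\mathrm{BMOA}$ $\Rightarrow$ chord-arc'' is not proved in the paper and is not something you can cite. The standard $\mathrm{BMOA}\Rightarrow$ chord-arc results require small norm, as you yourself note in route~(1).
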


The following claim is obvious from the definition.

\begin{proposition}\label{asmooth+unifconti}
Let $\gamma\colon \mathbb{R} \to \Gamma$ be an embedding onto
an asymptotically smooth chord-arc curve $\Gamma$.
If $\gamma$ is uniformly continuous then $\Gamma$ is asymptotically smooth relative to $\gamma$, or equivalently, $\gamma$ is an asymptotically smooth embedding.
\end{proposition}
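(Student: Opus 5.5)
The plan is to unwind the definitions and pass from smallness of the parameter to smallness of the chord via uniform continuity of $\gamma$. Assume $\Gamma$ is an asymptotically smooth chord-arc curve and $\gamma\colon\mathbb{R}\to\Gamma$ is a uniformly continuous embedding. Fix $\varepsilon>0$. By the asymptotic smoothness of $\Gamma$, there is $\delta'>0$ such that
\[
\frac{\ell(\wideparen{z_1z_2})}{|z_1-z_2|}\le 1+\varepsilon \quad\text{whenever } 0<|z_1-z_2|<\delta'.
\]
By uniform continuity of $\gamma$, there is $t_0>0$ such that $|s_1-s_2|\le t_0$ implies $|\gamma(s_1)-\gamma(s_2)|<\delta'$.

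Now take $z_i=\gamma(s_i)$ with $0<|s_1-s_2|\le t\le t_0$. Since $\gamma$ is injective, $z_1\neq z_2$, and the choice of $t_0$ gives $0<|z_1-z_2|<\delta'$. Hence the ratio $\ell(\wideparen{z_1z_2})/|z_1-z_2|$ is at most $1+\varepsilon$, so the supremum in the definition of relative asymptotic smoothness is at most $1+\varepsilon$ for all $t\le t_0$. Since the ratio is always at least $1$, the limit as $t\to0$ equals $1$.

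One point needs care: the arc $\wideparen{z_1z_2}$ must be the same in both definitions. For a curve through $\infty$, the subarc joining $z_1,z_2$ in $\mathbb{C}$ is $\gamma([s_1,s_2])$, the bounded one, and this is the arc used in both definitions, so nothing further is required.

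For the equivalent formulation, relative asymptotic smoothness with respect to $\gamma$ is literally condition \eqref{D_1} with $\delta=t_0$. The remark in Section~\ref{sec:preliminaries} then gives \eqref{D_2}, provided $\gamma$ is also asymptotically symmetric. If the intended notion of an asymptotically smooth embedding includes that requirement, I would obtain it from Theorem~\ref{BY2}'s converse direction. Alternatively, \eqref{D_1} can be used directly with the chord-arc property: $\ell\ge$ chord gives the $(1+\varepsilon)$-comparison of chords at small parameter scales, which should yield \eqref{eq:AS} by bounding $|\gamma(x)-\gamma(a)|/|\gamma(x)-\gamma(b)|$ through the lengths. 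This auxiliary step is the only potential obstacle; the main implication is immediate.
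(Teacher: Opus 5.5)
Your main argument is correct and is what the paper intends; the paper gives no proof and calls the claim obvious from the definitions. Uniform continuity turns small parameter distance into small chord, and asymptotic smoothness of $\Gamma$ then bounds the ratio. The identification of relative asymptotic smoothness with \eqref{D_1} is also exactly how the paper reads the ``or equivalently'' (``Clearly, this condition is equivalent\ldots'' in Section~\ref{sec:preliminaries}).

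You should, however, delete the speculative last paragraph, because neither route you suggest for obtaining asymptotic symmetry works for a general embedding $\gamma$.
\begin{itemize}
\item The converse of Theorem~\ref{BY2} (relative asymptotic conformality implies asymptotic symmetry) is proved only for the boundary map $g$ of a conformal map $G$. It is not available for an arbitrary parametrization.
\item \eqref{D_1} together with the chord-arc property only compares arc and chord for the \emph{same} pair of points. It says nothing about ratios of parameter distances, so it cannot yield \eqref{eq:AS} or \eqref{D_2}.
\end{itemize}

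Concretely, take $\Gamma=\mathbb{R}$ and $\gamma(x)=x|x|$ for $|x|\le 1$, $\gamma(x)=2x-1$ for $x>1$, $\gamma(x)=2x+1$ for $x<-1$. This $\gamma$ is a $2$-Lipschitz homeomorphism onto an asymptotically smooth chord-arc curve, and \eqref{D_1} holds trivially. Yet for $x=0$, $a=2h$, $b=h$ we have $|x-a|/|x-b|=2$, while $|\gamma(x)-\gamma(a)|/|\gamma(x)-\gamma(b)|=4$ for every $h>0$. So \eqref{eq:AS} and \eqref{D_2} both fail at every scale.

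Hence the ``equivalently'' in the statement can only mean \eqref{D_1}. Asymptotic symmetry, which the paper's definition of an asymptotically smooth embedding presupposes, would have to be assumed separately.
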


Conversely, we state the implication of $\mathrm{VMOA}$ from asymptotically smoothness in the following form.
The proof uses the original arguments in \cite{Po78} as localized results and check the uniformity of
relevant constants.

\begin{theorem}\label{ASmooth->VMOA}
If $\Gamma$ is an asymptotically smooth chord-arc curve relative to
$g: \mathbb{R} \to \Gamma$,
then $\log G' \in \mathrm{VMOA}(\mathbb{H})$.
\end{theorem}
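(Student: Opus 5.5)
Since $\Gamma$ is chord-arc, the first step shows $\log G'\in\mathrm{BMOA}(\mathbb{H})$. Via a M\"obius transformation this is the classical fact that the Riemann map onto a chord-arc domain has $\log F'\in\mathrm{BMOA}$; equivalently, $|G'|$ restricted to $\mathbb{R}$ is an $A_\infty$ weight. Hence $g$ is locally absolutely continuous and $g'=e^{u}$ a.e., with $u=\log G'|_{\mathbb{R}}\in\mathrm{BMO}(\mathbb{R})$. It then suffices to prove that $\operatorname{Re}u=\log|g'|\in\mathrm{VMO}(\mathbb{R})$. Indeed, $\operatorname{Im}u$ is (up to a constant) the Hilbert transform of $\operatorname{Re}u$, and the Hilbert transform preserves $\mathrm{VMO}$. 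Therefore $u\in\mathrm{VMO}$, i.e.\ $\log G'\in\mathrm{VMOA}(\mathbb{H})$.

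To get vanishing oscillation of $\log|g'|$ uniformly over short intervals, we localize as in the proof of Theorem~\ref{AS->B_0}. For each interval $I$ of length $1$ we take a fixed smooth domain $Q_I\subset\mathbb{H}$ over $I$: a square, or the stadium $D_I$, with rounded corners so that Kellogg--Warschawski applies. All $Q_I$ are translates of one another. Let $T_I\colon\mathbb{D}\to Q_I$ be the normalized conformal maps; they are uniformly bi-Lipschitz and $\log T_I'$ is a fixed smooth function up to translation. Set $G_I=G\circ T_I$. Then $\partial G_I(\mathbb{D})$ consists of the arc $g(I)$ together with $G(\partial Q_I\setminus I)$. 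On $\tilde I=T_I^{-1}(I)$ we have
\[
\log|G_I'|=\log|g'|\circ T_I+\log|T_I'|,
\]
so for subintervals $J\subset I$ the mean oscillation of $\log|g'|$ on $J$ is comparable, up to uniform constants plus a term that is $o(1)$ as $|J|\to0$ (from the smooth $\log|T_I'|$), to that of $\log|G_I'|$ on $T_I^{-1}(J)$.

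The core step applies Pommerenke's argument for \cite[Theorem 2]{Po78}, localized to the arc $\tilde I$ and to the middle half $\tilde I'$. Pommerenke's proof of ``asymptotically smooth $\Rightarrow$ VMOA'' goes through a local estimate. For a boundary arc $A\subset\partial\mathbb{D}$ of small length, the chord-arc ratio $\ell(F(A))/|F(\text{endpoints})|\le1+\varepsilon$ on all subarcs of a slightly larger arc yields
\[
\frac{1}{|A|}\int_A\bigl|\log|F'|-(\log|F'|)_A\bigr|\,|dz|\le\omega(\varepsilon)+o(1),
\]
with $\omega(\varepsilon)\to0$. Here the $o(1)$ depends only on the chord-arc/BMO constant of $F$. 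The mechanism is a Jensen-type comparison: $\frac1{|A|}\int_A|F'|\ge\bigl|\frac1{|A|}\int_AF'\bigr|$, while $\exp$ of the average of $\log|F'|$ is controlled by the harmonic extension. Combined with the reverse-H\"older/$A_\infty$ control, this forces $|F'|$ to be nearly constant in average on $A$. We need this for $F=G_I$ and arcs $A\subset\tilde I'$. The hypothesis of relative asymptotic smoothness says that $\ell(\wideparen{g(a)g(b)})/|g(a)-g(b)|\le1+\varepsilon$ whenever $|a-b|<\delta(\varepsilon)$, \emph{independently of the location of $I$}. Since $T_I$ is uniformly bi-Lipschitz, this transfers to arcs of $\tilde I$ of length $<\delta/\kappa$. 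The chord-arc constants of $\partial G_I(\mathbb{D})$ are uniform in $I$, because $G$ is uniformly quasisymmetric and the $Q_I$ are translates. Hence all constants in Pommerenke's local estimate are uniform, and we obtain $\sup_{|J|<\delta'}\mathrm{osc}_J\log|g'|\to0$ with $J$ ranging over all intervals in $\mathbb{R}$. Every short interval lies in the middle half of some $I$.

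The main obstacle is the third step. Pommerenke's proof is written globally on $\partial\mathbb{D}$, so it has to be localized to a subarc, keeping the contribution of $\partial G_I(\mathbb{D})\setminus g(I)$ (which need not be asymptotically smooth) under control. The uniformity of every constant in $I$ must also be checked. I would first attempt this through the Carleson-measure formulation. The measure $(1-|z|^2)|(\log G_I')'|^2\,dx\,dy$ restricted to Carleson boxes over $\tilde I'$ should be controlled by Bloch-type smallness plus a quantity governed by the local chord-arc ratio. Smallness in boxes far from $\partial\mathbb{D}\setminus\tilde I$ is insensitive to the rest of the boundary, by harmonic-measure decay. We then pass back to $\mathbb{H}$ through the chain rule \eqref{chain} and the Carleson characterization of $\mathrm{VMOA}(\mathbb{H})$.
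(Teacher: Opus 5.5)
There is a genuine gap. Your outer reductions match the paper: chord-arc gives $\log G'\in\mathrm{BMOA}(\mathbb{H})$, it suffices to prove $\operatorname{Re}\log g'\in\mathrm{VMO}$ because the Hilbert transform preserves $\mathrm{VMO}$, and you localize to $G_I=G\circ T_I$ and transfer back through the uniformly bi-Lipschitz $T_I$. The gap is the core step, which you attribute to Pommerenke as a ``local estimate'' and then leave open.

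As you describe it, the mechanism cannot work. A chord-arc ratio close to $1$ constrains only the direction of $g'$: a segment traversed at arbitrary speed has ratio exactly $1$. The triangle inequality $\frac1{|A|}\int_A|F'|\ge|\frac1{|A|}\int_AF'|$ adds nothing about $|F'|$. The link between modulus and geometry comes from harmonicity. With $\zeta=\rho e^{i\theta}$ and $v(t)=\log|g_I'(e^{i(\theta+t)})/G_I'(\zeta)|$, one has $\int vP_\rho=0$ and $(\int|v|P_\rho)^2\le 8(\int e^vP_\rho-1)$. So one must show $\int e^vP_\rho\le 1+O(\varepsilon)$ uniformly. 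For the local part this needs the Visser--Ostrowski bound
\[
|g_I(e^{i(\theta+t)})-g_I(e^{i\theta})|\le(1+\varepsilon)\,t\,|G_I'(\zeta)|,\qquad 0\le t\le\lambda(1-\rho).
\]
This bound holds because $\log G_I'$ is uniformly little Bloch near $\frac12 E$; chord-arc or $\mathrm{BMO}$ constants alone do not give it. That is the missing idea. Relative asymptotic smoothness implies relative asymptotic conformality, since $|a-w|+|w-b|\le\ell(\wideparen{ab})$. Hence Theorem~\ref{newformilation} gives $\log G'\in B_0(\mathbb{H})$.

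The same $B_0$ property disposes of what you call the main obstacle, with no Carleson-measure argument. For $|t|\ge\lambda(1-\rho)$ the boundary may include the non-smooth part $G(\partial Q_I\setminus I)$. There the paper uses only two inputs: the uniform chord-arc constant of $\partial G(Q_I)$ (via Lemma~\ref{boundary-arc-estimate}) and the radial estimate \eqref{radial} with exponent $\frac12$ coming from $B_0$. Together these give $\int_0^te^{v}\le K(1-\rho)^{-1/2}t^{3/2}$ and hence a Poisson tail of size $O(\lambda^{-1/2})$. With only the general Bloch bound the exponent is up to $3$, and the same computation does not even give a bounded tail.

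Your Carleson route is not a substitute. Bloch smallness does not control Carleson mass, since asymptotically conformal curves need not be rectifiable. Bounding box mass by the local chord-arc ratio is essentially the theorem itself.

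Finally, uniform quasisymmetry of $G$ does not give a uniform chord-arc constant for $\partial G(Q_I)$, because quasisymmetric maps do not preserve rectifiability. As in the paper's lemma, one needs that $G(\partial Q_I\setminus I)$ is the image of a hyperbolic quasi-geodesic in a quasidisk, hence a uniform arc, together with the chord-arc property of $g(I)$.
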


\proof
By the standard $\mathrm{BMOA}$ theorem for conformal maps onto chord-arc domains, the assumption implies that $\log G' \in \mathrm{BMOA}(\mathbb{H})$. 
Moreover, $g$ is locally absolutely continuous and $\log g'$ belongs to $\mathrm{BMO}(\mathbb{R})$.

For each closed interval $I \subset \mathbb{R}$ of length $1$, let $Q_I$ be the square domain in $\mathbb{H}$
over $I$.
We take the normalized conformal mapping $S_I: \mathbb{D} \to Q_I$ that extends homeomorphically to their boundaries, and set $G_I=G \circ S_I$. 
The normalization of $S_I$ is given equally to all $I$ so that $S_I(0)$ is the center of $Q_I$ and
$S_I'(0)>0$.
Let $E=S_I^{-1}(I) \subset \partial \mathbb{D}$, and let $g_I$ be the extension of $G_I$ to $E$, which is absolutely continuous. 
Let $\frac{1}{2}E$ be the arc in $\partial \mathbb{D}$ with the same center as $E$ but of a half length.
Since $g_I(E)$ is a subarc of $\Gamma$, it satisfies the condition for the asymptotic smoothness.
Under this condition, we will show that $\log g_I'$ satisfies the condition for $\mathrm{VMO}$ on $\frac{1}{2}E$ uniformly no matter which $I$ is chosen. 

We use the quantitative interpretation of the proof of Pommerenke \cite[Theorem 2]{Pom} for
the implication of asymptotic smoothness. 

The assumption on $\Gamma$ in particular implies that $\Gamma$ is an asymptotically conformal quasicircle.
Then, by Theorem \ref{AS->B_0} when we assume that $g$ is uniformly continuous, or by Theorem \ref{newformilation} when we assume that $\Gamma$ is asymptotically smooth relative to $g$, $\log G'$ belongs to $B_0(\mathbb{H})$.
It follows that we can choose 
a constant $r \in (0,1)$ such that any $e^{i\theta} \in \frac{1}{2}E$ satisfies
\begin{equation}\label{radial}
\left| \log G_I'(\rho e^{i\theta})
-\log G_I'(\rho_*e^{i\theta}) \right|
\leq \frac12 \log\frac{1-\rho_*}{1-\rho}
\end{equation}
whenever $r<\rho_*<\rho<1$. 

We also use the following fact that each $G_I(Q_I)$ is a domain in $\Omega$ with a chord-arc boundary with a uniform chord-arc constant.

\begin{lemma}
Let $G:\mathbb{H} \to \Omega$ is a conformal mapping such that
$\Gamma=\partial \Omega$ is a chord-arc curve. 
Then, for the square $Q_I \subset \mathbb{H}$ over any interval $I \subset \mathbb{R}$ of length $1$, the boundary of the image $G(Q_I)$ is a chord-arc curve with uniform constant $L \geq 1$ independent of $I$.
\end{lemma}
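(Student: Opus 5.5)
The boundary of $G(Q_I)$ splits into the bottom arc $g(I)\subset\Gamma$ and the image $G(\partial Q_I\cap\mathbb{H})$ of the three remaining sides. To control the constants uniformly in $I$, I would normalize first. Let $x_I$ be the midpoint of $I$, set $z_I=x_I+i$, and consider $\widehat G_I(z)=(G(z+x_I)-G(z_I))/G'(z_I)$. Since the chord-arc property is invariant under similarities, it suffices to prove the lemma for $\widehat G_I$ and the fixed square $Q=Q_{[-1/2,1/2]}$. Each $\widehat G_I$ maps $\mathbb{H}$ onto a chord-arc domain with the same constant $K$ as $\Gamma$, and each is normalized by $\widehat G_I(i)=0$ and $\widehat G_I'(i)=1$.

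The main tool is the family $\mathcal{F}$ of conformal maps $F\colon\mathbb{H}\to\mathbb{C}$ with $F(i)=0$, $F'(i)=1$, $F(\infty)=\infty$, and $F(\mathbb{R})$ a $K$-chord-arc curve. I would show that $\mathcal{F}$ is compact under locally uniform convergence on $\overline{\mathbb{H}}$. A $K$-chord-arc curve is a $K'$-quasicircle, so each $F$ extends to a $K''$-quasiconformal map of $\mathbb{C}$. Together with the normalization, this gives a uniform gauge $\eta$ for quasisymmetry and equicontinuity on compacts of $\overline{\mathbb{H}}$. A limit of such maps is conformal in $\mathbb{H}$ with $F'(i)=1$, and its boundary curve is still $K$-chord-arc, because the chord-arc inequality $\ell\le K|a-b|$ passes to limits by lower semicontinuity of length.

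With compactness in hand, I would verify the chord-arc condition for $\partial F(Q)$ pair by pair, with a constant depending only on $K$. Take $a,b\in\partial F(Q)$ with preimages $\alpha,\beta\in\partial Q$.

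1. If $\alpha,\beta\in I$, the subarc of $\partial F(Q)$ lying in $\Gamma$ has length at most $K|a-b|$. This subarc might not be the shorter one, but it still bounds the arc length up to $K$.
2. If $\alpha$ and $\beta$ both lie on the upper three sides, $F$ is smooth there, and $F'$ is bounded above and below on $\partial Q$ away from the two bottom corners, uniformly over $\mathcal{F}$ by compactness.
3. The main obstacle is the two bottom corners $x_I\pm 1/2$, where a vertical side meets $\mathbb{R}$ at a right angle and $F'$ may blow up or vanish.

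At a corner, say $0$ after translation, I would use local quasisymmetry of $F$ together with the Bloch-type bound of Proposition~\ref{bloch_norm_H:label} to compare the vertical image arc $F([0,iy])$ with the boundary arc $F([-y,y])$. The key estimate is
\[
\ell(F([0,iy]))\le \int_0^y|F'(it)|\,dt\lesssim \sum_k 2^{-k}y\,|F'(i2^{-k}y)|\lesssim \sum_k d(F(i2^{-k}y),\Gamma).
\]
Here the last step uses \eqref{yG'}. For chord-arc $\Gamma$, the quantity $|F'|$ satisfies an $A_\infty$-type (BMO) control, which I expect to make this dyadic sum comparable to $|F(iy)-F(0)|$ with constants depending only on $K$. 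I expect this step, bounding the length of the image of the vertical segment near its endpoint on $\mathbb{R}$ by its chord, to be the genuine difficulty. I would handle it via Jerison--Kenig's result that $|g'|\in A_\infty$ on chord-arc images.

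Once the corner estimate holds, the image of a small corner neighborhood of $\partial Q$ is two chord-arc arcs meeting at $F(0)$. They make a non-degenerate "angle" by quasisymmetry, because the image of the vertical side stays in a cone-like region away from $\Gamma$, with $d(F(iy),\Gamma)\asymp|F(iy)-F(0)|$ by \eqref{d_f}. This yields the chord-arc bound for mixed pairs. Finally, patching the local cases with the compactness argument over $\mathcal{F}$ gives a constant $L$ depending only on $K$, and in particular independent of $I$.
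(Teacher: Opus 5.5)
Your outline is sound, but it takes a different route from the paper.

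\textbf{The paper's route.} The paper argues structurally. First, $g(I)$ is chord-arc by hypothesis. Second, $I'=\partial Q_I\setminus I$ is a hyperbolic quasi-geodesic, so $G(I')$ is a quasi-geodesic in the uniform domain $\Omega$. It is therefore a uniform arc, and in particular a chord-arc arc, by V\"ais\"al\"a's lemma (or by Jerison--Kenig for the vertical sides). Third, a quasicircle made of two Ahlfors--David regular arcs is chord-arc. Uniformity in $I$ is asserted from the fact that all constants depend only on $\Gamma$.

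\textbf{Your route.} You normalize by a similarity, which makes the independence of $I$ transparent, and then check the chord-arc inequality by hand. Your dyadic corner estimate is in effect a proof that the vertical sides have chord-arc images. Your ``angle'' argument for mixed pairs replaces the theorem that an Ahlfors-regular quasicircle is chord-arc. So the paper's proof is shorter but relies on two cited results, while yours is self-contained and more explicit.

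\textbf{Two adjustments.}

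First, the corner step is not the real difficulty and needs no $A_\infty$ input. Quasisymmetry of the extension with center $0$ gives $|F(i2^{-k-m}y)-F(0)|\le\eta(2^{-m})|F(i2^{-k}y)-F(0)|$. Choose $m$ with $\eta(2^{-m})\le 1/2$. Then the terms $d(F(i2^{-k}y),\Gamma)\le|F(i2^{-k}y)-F(0)|$ decay geometrically in blocks of length $m$, and your dyadic sum is at most $2m\,\eta(1)\,|F(iy)-F(0)|$. This holds for any quasidisk. Likewise, for a mixed pair $x_0+is$, $\beta\in\mathbb{R}$, quasisymmetry at the centers $x_0+is$ and $\beta$ gives directly
$|F(x_0+is)-F(x_0)|+|F(\beta)-F(x_0)|\le 2\eta(1)\,|F(x_0+is)-F(\beta)|$.
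No cone geometry is needed.

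Second, use compactness of $\mathcal F$ only for quantities that are continuous under locally uniform convergence. Examples are chords $|F(\alpha)-F(\beta)|$ of separated pairs, which are positive because limits are injective, and $|F'|$ on compacta of $\mathbb{H}$. Lengths of boundary arcs are only lower semicontinuous, so a compactness or contradiction argument cannot give their uniform upper bounds. Those bounds must come from the quantitative estimates: the chord-arc property of $\Gamma$, the corner sum, and Koebe distortion on the compact part of the upper sides. These estimates in fact make the compactness argument unnecessary.
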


\begin{proof}
We divide $\partial Q_I$ into $I$ and $I'=\partial Q_I \setminus I$.
By the assumption, the image $g(I) \subset \Gamma$ of $I$ under the extension $g$ of $G$ to $\mathbb{R}$ is a chord-arc curve.
On the other hand, $I'$ is a quasi-geodesic in the hyperbolic metric of $\mathbb{H}$, and so is
its image $G(I')$ in $\Omega$.
Since $\Omega$ is a uniform domain, which is equivalent to the condition that $\Omega$ is a quasidisk, $G(I')$ is a uniform arc (see \cite[Lemma 2.6]{V}), which in particular implies that $G(I')$ is a chord-arc curve. 
Alternatively, we may decompose $I'$ further into the vertical sides and the horizontal side, and check that 
the images of the vertical sides are chord-arc curves by \cite[Proposition 1.5]{JK}.
Because both $g(I)$ and $G(I')$ satisfy the Ahlfors--David $1$-regular condition and their union $\partial G(Q_I)$ is a quasicircle, we see that $\partial G(Q_I)$ is a chord-arc curve. 
Since all the constants are determined independently of $I$, the chord-arc constant $L$ for $\partial G(Q_I)$ is also uniform.
\end{proof}

The following estimate is used in \cite[Theorem 2]{Pom}, which is a consequence of this geometric condition that the boundary of the image domain is a chord-arc curve. 

\begin{lemma}
\label{boundary-arc-estimate}
Let $F$ be a conformal mapping of $\mathbb{D}$ onto a bounded Jordan domain $W$
whose boundary is a chord-arc with the constant $L \geq 1$. Let $f$ denote the boundary extension of $F$.
For any $r \in (0,1)$, there exists a constant $C_r>0$ depending only on $r$ such that any
$\theta \in [0,2\pi)$ and $t \in [0,\pi]$ satisfy
\[
\int_{\theta}^{\theta+t} |f'(e^{iu})|\,du
\leq C_rL t\, |F'(\xi_t)|,
\]
where
\[
\xi_t=\left(1-\frac{(1-r)t}{\pi}\right)e^{i\theta}.
\]
\end{lemma}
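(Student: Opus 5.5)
Since $W$ is a chord-arc domain, it is in particular a quasidisk, so $F$ extends to a quasisymmetric homeomorphism of $\overline{\mathbb{D}}$ whose gauge $\eta$ depends only on $L$, and $f$ is absolutely continuous with $\int_\theta^{\theta+t}|f'(e^{iu})|\,du=\ell(f(A_t))$, where $A_t$ denotes the arc $\{e^{iu}:\theta\le u\le\theta+t\}$. The plan is to bound this length by the chord-arc condition and then compare the chord with $t|F'(\xi_t)|$ via quasisymmetry and the Koebe distortion estimate.

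\emph{Step 1 (length to chord).} The two endpoints $f(e^{i\theta})$ and $f(e^{i(\theta+t)})$ cut $\partial W$ into two subarcs. Since $t\le\pi$, the arc $A_t$ is the shorter of the two arcs of $\partial\mathbb{D}$, and quasisymmetry of $f$ makes its image the one of smaller diameter, up to a constant depending only on $L$. The chord-arc condition then gives
\[
\ell(f(A_t))\le L'\,\bigl|f(e^{i(\theta+t)})-f(e^{i\theta})\bigr|,
\]
where $L'$ depends only on $L$. If necessary we instead bound $\ell(f(A_t))$ by $L\,\operatorname{diam}f(A_t)$; this is the same estimate up to an absolute factor.

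\emph{Step 2 (chord to derivative).} Put $d=(1-r)t/\pi\in(0,1-r]$, so that $\xi_t=(1-d)e^{i\theta}$ and $|e^{i\theta}-\xi_t|=d$, while $|e^{i(\theta+t)}-\xi_t|\le t+d\le Ct$. We apply quasisymmetry at the base point $\xi_t$ to the boundary points $e^{i\theta}$ and $e^{i(\theta+t)}$, together with a nearest boundary point $f(e^{i\varphi})$ to $F(\xi_t)$ (as in \eqref{kappa1}--\eqref{kappa2}). Since $|e^{i\varphi}-\xi_t|\ge 1-|\xi_t|=d$, the ratio of distances is at most $\pi/(1-r)+1$, and this yields
\[
|f(e^{i(\theta+t)})-F(\xi_t)|\le \eta\bigl(\pi/(1-r)+1\bigr)\,d\bigl(F(\xi_t),\partial W\bigr),
\]
with the same bound for $e^{i\theta}$. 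The Koebe estimate (the disk case of \eqref{yG'}) gives $d(F(\xi_t),\partial W)\le (1-|\xi_t|^2)|F'(\xi_t)|\le 2d|F'(\xi_t)|\le 2t|F'(\xi_t)|$.

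\emph{Combining.} Applying the triangle inequality to the chord and using Steps 1 and 2, we obtain
\[
\ell(f(A_t))\le C_r L\,t\,|F'(\xi_t)|,
\]
with $C_r$ comparable to $\eta(\pi/(1-r)+1)$.

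\emph{Main obstacle.} The gauge $\eta$ depends on $L$, so the constant obtained this way is not literally of the form $C_rL$ with $C_r$ depending on $r$ alone. To fix this, we can avoid the quasisymmetry gauge and use the Ahlfors three-point (bounded turning) property with constant linear in $L$: a chord-arc curve with constant $L$ satisfies bounded turning with constant $L$. Alternatively, we follow Pommerenke's route, integrating $|F'|$ radially along $\xi_t$ towards $e^{i\theta}$ and using the distortion bound $|F'(\rho e^{iu})|\le C_r|F'(\xi_t)|$ on the Carleson box over $A_t$ to control the chord. The $L$-dependence is then absorbed into the single application of the chord-arc inequality in Step 1. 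I expect this bookkeeping of constants to be the only delicate point.
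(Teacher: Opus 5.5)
There is a genuine gap in your opening sentence. $F$ is quasisymmetric on $\overline{\mathbb{D}}$, but not with a gauge depending only on $L$: nothing in the lemma normalizes $F$, and quasisymmetry on $\overline{\mathbb{D}}$ is not preserved uniformly under precomposition with disk automorphisms. In fact the inequality itself fails without such a normalization.

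Take $W=\mathbb{D}$ (chord-arc with $L=\pi/2$), $F(z)=(z-a)/(1-\bar a z)$ with $a=\rho_0e^{i(\theta+\pi/4)}$, $t=\pi/2$ and $r=1/2$, so that $\xi_t=\frac34e^{i\theta}$. Then $|F'(\xi_t)|\le(1-\rho_0^2)/(1-|\xi_t|)^2=16(1-\rho_0^2)\to0$, whereas
\[
\int_\theta^{\theta+\pi/2}|f'(e^{iu})|\,du=\int_{-\pi/4}^{\pi/4}\frac{1-\rho_0^2}{|1-\rho_0e^{is}|^2}\,ds\longrightarrow2\pi\qquad(\rho_0\to1).
\]
Here $f(A_t)$ is almost all of $\partial W$ while its endpoints nearly coincide. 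So your Step~1 (the image of the shorter arc is the small arc) is exactly what breaks, and no bookkeeping of constants repairs it.

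Neither of your proposed fixes touches this. Bounded turning is a property of $\partial W$ alone and says nothing about how $f$ distributes $\partial\mathbb{D}$ along $\partial W$. The bound $|F'(\rho e^{iu})|\le C_r|F'(\xi_t)|$ on the Carleson box over $A_t$ is false even for normalized maps. Koebe distortion controls $|F'|$ only on sets of bounded hyperbolic diameter, and $|F'|$ may blow up at $\partial\mathbb{D}$; for example $|F'(z)|\asymp|z-z_0|^{-1/2}$ at a right-angle corner of $\partial W$. That is why the paper, following Pommerenke, uses the boundary distortion theorem \cite[Corollary~4.18]{Pom}. It selects \emph{some} points $e^{i\theta_j}$ in the flanking arcs $[\theta-t,\theta]$ and $[\theta+t,\theta+2t]$ with $|f(e^{i\theta_2})-f(e^{i\theta_1})|\le2e^{K/c_r}d_F(\xi_t)$. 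Note, however, that the paper's proof tacitly makes the same assumption as your Step~1 when it applies the chord-arc inequality to $f([\theta_1,\theta_2])$; in the example above that is the long arc.

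What is true, and all that Theorem~\ref{ASmooth->VMOA} needs, is the estimate for maps that are $\eta$-quasisymmetric on $\overline{\mathbb{D}}$ for a fixed $\eta$. The maps $G_I=G\circ S_I$ have this property uniformly in $I$, because $G$ is quasisymmetric on $\mathbb{C}$ and the $S_I$ are translates of one map. Under this hypothesis your argument is correct:
\begin{itemize}
\item Step~2, applied to every point of $A_t$, gives $\operatorname{diam}f(A_t)\le\frac{4(1-r)}{\pi}\,\eta\bigl(\frac{\pi}{1-r}+1\bigr)\,t\,|F'(\xi_t)|$.
\item For Step~1, cut $A_t$ into $N=N(\eta)$ subarcs short enough that quasisymmetry forces each image to be the smaller-diameter arc between its endpoints. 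Then $\ell(f(A_t))\le NL\operatorname{diam}f(A_t)$.
\end{itemize}
The resulting constant depends on $r$ and $\eta$ and is linear in $L$. This is harmless, since $K_\Gamma$ may depend on $\Gamma$. Compared with the paper, your route needs only Koebe's estimate and quasisymmetry, with no harmonic-measure input. The paper's route bounds the chord without any quasisymmetry gauge and needs the normalization only at the chord-arc step.
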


\begin{proof}
Let $\xi_t = \left(1-\frac{(1-r)t}{\pi}\right)e^{i\theta}$ be the interior reference point. 
Note that its Euclidean distance to the unit circle $\partial \mathbb{D}$ is exactly $1 - |\xi_t| = \frac{(1-r)t}{\pi}$. 
By the Koebe Distortion Theorem for univalent functions, the distance $d_F(\xi_t)=\mathrm{dist}(F(\xi_t),\partial W)$ can be bounded by the derivative at $\xi_t$:
\[
d_F(\xi_t) \leq 2(1-|\xi_t|)|F'(\xi_t)| = \frac{2(1-r)}{\pi} t |F'(\xi_t)|.
\]
We consider two boundary arcs $I_1$ and $I_2$ on $\partial \mathbb{D}$ corresponding to the parameter intervals $[\theta-t, \theta]$ and $[\theta+t, \theta+2t]$, respectively. 
By the boundary distortion estimate for conformal mappings of $\mathbb{D}$ into $\mathbb{C}$ (see \cite[Corollary 4.18]{Pom}), there exist suitable points $e^{i\theta_j}\in I_j$ with $\theta_1 \leq \theta <\theta+t \leq \theta_2$ such that the lengths $\ell(F(S_j))$ of the images of the non-Euclidean segments $S_j$ connecting $\xi_t$ to $e^{i\theta_j}$ ($j=1,2$) satisfy
\[
\ell(F(S_j)) \leq d_F(\xi_t) \exp\left[\frac{K}{\omega(\xi_t, I_j)}\right],
\]
where $K$ is an absolute constant, and $\omega(\xi_t, I_j)$ denotes the harmonic measure of the arc $I_j$ at $\xi_t$.

Since the distance $1-|\xi_t| = \frac{(1-r)t}{\pi}$ is proportional to the lengths of the arcs $I_1$ and $I_2$ up to a factor depending only on $r$, the scale invariance of the harmonic measure implies that $\omega(\xi_t, I_j)$ are bounded from below by a positive constant $c_r$ depending only on $r$. 
Consequently, the exponential term is bounded by a constant $\exp(K/c_r)$.

Observing that $|f(e^{i\theta_j}) - F(\xi_t)| \leq \ell(F(S_j))$. 
Applying the triangle inequality yields
\[
|f(e^{i\theta_2})-f(e^{i\theta_1})| \leq |f(e^{i\theta_2})-F(\xi_t)| + |F(\xi_t)-f(e^{i\theta_1})| \leq 2\exp(K/c_r) d_F(\xi_t).
\]
Substituting the Koebe estimate for $d_F(\xi_t)$ into the inequality above, we obtain
\[
|f(e^{i\theta_2})-f(e^{i\theta_1})| \leq \frac{4(1-r)\exp(K/c_r)}{\pi} t |F'(\xi_t)|\eqqcolon C_r t|F'(\xi_t)|.
\]
The property of the chord-arc curve yields
\[
\int_{\theta_1}^{\theta_2}|f'(e^{iu})|\,du \leq L |f(e^{i\theta_2})-f(e^{i\theta_1})|.
\]
Since
\[\int_{\theta}^{\theta+t} |f'(e^{iu})|\,du \leq \int_{\theta_1}^{\theta_2}|f'(e^{iu})|\,du,
\]
combining these three inequalities gives the assertion.
\end{proof}

\begin{proof}[Proof of Theorem \ref{ASmooth->VMOA} continued]
We follow the argument for the former part of the original proof of \cite[Theorem 2]{Pom}.
Let $\Phi=\log G_I'$ and $\varphi=\log g_I'$.
Fix $\zeta=\rho e^{i\theta}$ $(0<\rho<1)$ for $e^{i\theta} \in \frac{1}{2}E$, and put
\begin{equation} \label{realpartlog}
v(t)=\operatorname{Re}\{\varphi(e^{i(\theta+t)})-\Phi(\zeta)\}=\log\left|\frac{g_I'(e^{i(\theta+t)})}{G_I'(\zeta)}\right|
\end{equation}
for $t \in [0,\pi]$. 
By the Poisson kernel
\[
P_\rho(t)=
\frac1{2\pi}\frac{1-\rho^2}{|e^{it}-\rho|^2},
\]
the integral formula of the boundary value of a harmonic function implies 
\[
\int_{-\pi}^{\pi} v(t)P_\rho(t)\,dt=0.
\]
Then, the Schwarz inequality and
the elementary inequalities $e^x\geq 1+x+x^2/2$ for $x\ge0$ and
$e^x\ge1+x$ for $x<0$ give
\begin{equation}\label{baseineq}
\left(\int_{-\pi}^{\pi}|v(t)|P_\rho(t)\,dt\right)^2
\le
8\left(\int_{-\pi}^{\pi}e^{v(t)}P_\rho(t)\,dt-1\right).
\end{equation}
We will prove that
\[
\lim_{\rho \to 1}\,\int_{-\pi}^{\pi}e^{v(t)}P_\rho(t)\,dt=1
\]
uniformly in $\theta$ and $I$, and then apply \eqref{baseineq} to get the vanishment of the left-hand side as $\rho \to 1$.

We first estimate the part away from $t=0$. 
Let $\alpha=\lambda(1-\rho)$,
where $\lambda>0$ will be chosen later. For $\lambda$ large enough and
$\alpha\leq t\leq\pi$, set
\[
\xi_t=\left(1-\frac{(1-r)t}{\pi}\right)e^{i\theta}.
\]
Then \eqref{radial} gives
\[
|G_I'(\xi_t)|\leq \left(\frac{(1-r)t}{\pi(1-\rho)}\right)^{1/2}|G_I'(\zeta)|, \qquad (\zeta=\rho e^{i\theta}).
\]
Using \eqref{realpartlog} and Lemma \ref{boundary-arc-estimate} combined with this, we obtain
\begin{equation}\label{comparison}
\int_0^t e^{v(s)}\,ds=\frac1{|G_I'(\zeta)|}\int_{\theta}^{\theta+t}|g_I'(e^{iu})|\,du \leq C_r L t\,\frac{|G_I'(\xi_t)|}{|G_I'(\zeta)|}\leq K_\Gamma (1-\rho)^{-1/2} t^{3/2},
\end{equation}where $K_\Gamma$ depends only on $L$ and on the above choice of $r$, hence only on $\Gamma$.

Let the leftmost of \eqref{comparison} be $\phi(t)$ and let the rightmost be $\psi(t)$ for $\alpha \leq t \leq \pi$.
By integration by parts, or equivalently by \cite[Lemma 1]{Po78}, this implies
\begin{align*}
\int_\alpha^\pi e^{v(t)}P_\rho(t)\,dt &=\int_\alpha^\pi \phi'(t)P_\rho(t)\,dt\\
&\leq\int_\alpha^\pi \psi'(t) P_\rho(t)\,dt +\psi(\alpha) P_\rho(\alpha)\\
&=\frac{3K_\Gamma}{2(1-\rho)^{1/2}}\int_\alpha^\pi t^{1/2}P_\rho(t)\,dt+\frac{K_\Gamma \alpha^{3/2}}{(1-\rho)^{1/2}}P_\rho(\alpha).
\end{align*}
By $P_\rho(t) \leq \pi(1-\rho)t^{-2}$ and $\alpha=\lambda(1-\rho)$, we have
\[
\int_{\alpha}^{\pi} e^{v(t)}P_\rho(t)\,dt
\leq 4\pi K_\Gamma\lambda^{-1/2}.
\]
The same estimate holds on $[-\pi,-\alpha]$. 
Therefore, given
$\varepsilon>0$, we may choose $\lambda=\lambda(\varepsilon,\Gamma)$ such that
\begin{equation}\label{tailpart}
\int_{\alpha}^{\pi} e^{v(t)}P_\rho(t)\,dt+\int_{-\pi}^{-\alpha} e^{v(t)}P_\rho(t)\,dt\leq 2\varepsilon.
\end{equation}

Next, we estimate the local part. Here, we use the assumption that $\Gamma$ is asymptotically smooth relative to $g$. 
Then, there is some $t_0>0$ such that $e^{i(\theta+t)} \in E$ and
\[
\ell(\wideparen{g_I(e^{i(\theta+t)}) g_I(e^{i\theta})})\leq (1+\varepsilon)|g_I(e^{i(\theta+t)})-g_I(e^{i\theta})| 
\]
for any $\theta$ with $e^{i\theta} \in \frac{1}{2}E$ and $t \in (0,t_0)$. Consequently,
\[
\int_0^t e^{v(s)}\,ds=\frac1{|G_I'(\zeta)|}\int_{\theta}^{\theta+t}|g_I'(e^{iu})|\,du \leq(1+\varepsilon)\frac{|g_I(e^{i(\theta+t)})-g_I(e^{i\theta})|}{|G_I'(\zeta)|}.
\]

Now take $0\leq t\leq\alpha=\lambda(1-\rho)$. 
By considering the Visser--Ostrowski quotient as in \eqref{ij} for $w_i=g_I(e^{i(\theta+t)})$ and $w_j=g_I(e^{i\theta})$, if $\rho$ is sufficiently close to $1$, with the threshold depending only on $\lambda$, $\varepsilon$, and the asymptotic conformality of $\Gamma$, then
\[
\frac{|g_I(e^{i(\theta+t)})-g_I(e^{i\theta})|}{|G_I'(\zeta)|}\leq(1+\varepsilon)t.
\]
Therefore
\[
\int_0^t e^{v(s)}\,ds\leq(1+\varepsilon)^2t,\qquad 0\leq t\leq\alpha.
\]
A second application of integration by parts gives
\begin{align*}
\int_0^\alpha e^{v(t)}P_\rho(t)\,dt\leq(1+\varepsilon)^2\int_0^\alpha P_\rho(t)\,dt\leq(1+\varepsilon)^2\int_0^\pi P_\rho(t)\,dt\leq\frac12+2\varepsilon .
\end{align*}
The same argument on the negative side gives
\[
\int_{-\alpha}^{0} e^{v(t)}P_\rho(t)\,dt\leq\frac12+2\varepsilon.
\]

Combining these local estimates with \eqref{tailpart}, we obtain
\[
\int_{-\pi}^{\pi}e^{v(t)}P_\rho(t)\,dt\leq 1+6\varepsilon
\]
for all $\rho$ sufficiently close to $1$, uniformly in $\theta$. Hence, \eqref{baseineq} gives
\[
\int_{-\pi}^{\pi}|v(t)|P_\rho(t)\,dt\leq 8\sqrt{\varepsilon}.
\]
Since $P_\rho(t)\geq (2\pi(1-\rho))^{-1}$ for $|t|\le1-\rho$, it follows that
\[
\frac1{2(1-\rho)}\int_{-(1-\rho)}^{1-\rho}\left|\operatorname{Re}\varphi(e^{i(\theta+t)})-\operatorname{Re}\Phi(\rho e^{i\theta})\right|\,dt\leq 8\pi\sqrt{\varepsilon}.
\]
This is equivalent to the $\mathrm{VMO}$ condition for 
$\operatorname{Re}\varphi$ on $\frac{1}{2}E$. 

We apply this result to any interval $I \subset \mathbb{R}$. Since $S_I$ is a $C^1$-diffeomorphism of $\frac{1}{2}E$ onto its image $I_0=S_I(\frac{1}{2}E)$ and $g|_{I}=g_I \circ S_I^{-1}$, we also see that $\operatorname{Re}\log g'|_{I_0}$ satisfies the $\mathrm{VMO}$ condition.
Indeed,
\[
\operatorname{Re}\log g'|_{I_0}=\operatorname{Re}\varphi \circ S_I^{-1}|_{I_0}+\operatorname{Re}\log (S_I^{-1})'|_{I_0},
\]
and clearly $\operatorname{Re}\log (S_I^{-1})'|_{I_0}$ is $\mathrm{VMO}$. Concerning $\operatorname{Re}\varphi \circ S_I^{-1}|_{I_0}$, for any interval $J \subset I_0$, we have
\begin{align*}
&\frac{1}{|J|}\int_J|\operatorname{Re}\varphi \circ S_I^{-1}(x)-(\operatorname{Re}\varphi \circ S_I^{-1})_J|\,dx\\
\leq& \frac{2}{|J|}\int_J|\operatorname{Re}\varphi \circ S_I^{-1}(x)-(\operatorname{Re}\varphi)_{S_I^{-1}(J)}|\,dx\\
\leq& \frac{2M^2}{|S_I^{-1}(J)|}\int_{S_I^{-1}(J)} |\operatorname{Re}\varphi(t)-(\operatorname{Re}\varphi)_{S_I^{-1}(J)}|\,dt,
\end{align*}
where $M \geq 1$ is a constant satisfying $M^{-1} \leq |S_I'(t)| \leq M$ for any $t \in \frac{1}{2}E$.
This implies that $\operatorname{Re}\varphi \circ S_I^{-1}|_{I_0}$ has vanishing mean oscillation.
Since the vanishing condition is uniform independently of $I$, and since $\log g' \in \mathrm{BMO}(\mathbb{R})$, it follows that
$\operatorname{Re} \log g' \in \mathrm{VMO}(\mathbb{R})$.

The conjugate function (the Hilbert transform) of a $\mathrm{VMO}$ function is also $\mathrm{VMO}$ (see \cite[Theorem VI.5.2]{Ga}).
Hence $\log g' \in \mathrm{VMO}(\mathbb{R})$.
Since $\log G' \in \mathrm{BMOA}(\mathbb{H})$, this implies that
$\log G' \in\mathrm{VMOA}(\mathbb{H})$.
\end{proof}

\begin{corollary}
If $\Gamma$ is an asymptotically smooth chord-arc curve and $g$ is uniformly continuous, then $\log G' \in \mathrm{VMOA}(\mathbb{H})$.
\end{corollary}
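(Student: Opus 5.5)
The corollary should follow by combining Proposition~\ref{asmooth+unifconti} with Theorem~\ref{ASmooth->VMOA}, in the same way Proposition~\ref{general} fed Theorem~\ref{newformilation} in the little Bloch setting.

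The first step is to pass from the absolute notion of asymptotic smoothness to the relative one. We assume that $\Gamma$ is an asymptotically smooth chord-arc curve, that is, $\ell(\wideparen{ab})/|a-b|\to 1$ uniformly as $|a-b|\to 0$. We also assume that $g$ is uniformly continuous. Then for every $\sigma>0$ there is $t>0$ such that $|g^{-1}(z_1)-g^{-1}(z_2)|\le t$ implies $|z_1-z_2|<\sigma$. Hence the supremum
\[
\sup\left\{\frac{\ell(\wideparen{z_1z_2})}{|z_1-z_2|}: 0<|g^{-1}(z_1)-g^{-1}(z_2)|\le t\right\}
\]
is at most the supremum of the same ratio over chords of length less than $\sigma$, and the latter tends to $1$. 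This is exactly the content of Proposition~\ref{asmooth+unifconti}: $\Gamma$ is asymptotically smooth relative to $g$. The chord-arc property itself is part of the hypothesis, so all the requirements of the relative definition are satisfied.

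The second step applies Theorem~\ref{ASmooth->VMOA} directly to conclude that $\log G'\in\mathrm{VMOA}(\mathbb{H})$.

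I expect no real obstacle. The only point to check is that the standing assumptions of Theorem~\ref{ASmooth->VMOA} hold. There, $G$ maps onto an unbounded quasidisk with $G(\infty)=\infty$, and a chord-arc curve through $\infty$ is a quasicircle. Inside that proof, the little Bloch input comes from Theorem~\ref{AS->B_0}, which applies precisely because $g$ is uniformly continuous and $\Gamma$ is asymptotically conformal; asymptotic conformality holds since asymptotic smoothness implies it. So the uniform continuity hypothesis is used both in the first step and, alternatively, in that internal step.
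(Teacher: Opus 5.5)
Your proposal is correct and is the same argument as the paper. The paper proves the corollary in one line: Proposition~\ref{asmooth+unifconti} (uniform continuity of $g$ turns absolute asymptotic smoothness into asymptotic smoothness relative to $g$) is followed by Theorem~\ref{ASmooth->VMOA}, which is exactly your two-step reduction.
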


\begin{proof}
This follows from Theorem \ref{ASmooth->VMOA} and Proposition \ref{asmooth+unifconti}.
\end{proof}

We conclude the paper by presenting the proof of Theorem~\ref{thm:intro-vmoa}, an upper half-plane analogue of Pommerenke's $\mathrm{VMOA}$ theorem.

\begin{proof}[Proof of Theorem~\ref{thm:intro-vmoa}]
The implication $(2)\Rightarrow(1)$ follows from Theorem~\ref{ASmooth->VMOA}, and $(1)\Rightarrow(3)$ is established in Theorem~\ref{import_VMOA}. The remaining implication $(3)\Rightarrow(2)$ (in fact, their equivalence) is immediate.
\end{proof}

Combining Theorem~\ref{thm:intro-vmoa}, \cite[Theorem 2.2]{Sh22}, \cite[Theorem 4.1]{WM-1}, and \cite[Theorem 1.4]{MW}
yields the following corollary, which provides equivalent characterizations of the $\mathrm{VMO}$-Teichm\"uller space (see \cite{Sh22}):

\begin{corollary}
Let $\Gamma$ be a quasicircle passing through $\infty$, and let $\Omega$ and $\Omega^*$ denote its complementary components. 
Let $G\colon \mathbb{H}\to\Omega$ and $G^*\colon \mathbb{H}^*\to\Omega^*$ be conformal mappings fixing $\infty$, and let $g\colon \mathbb{R}\to\Gamma$ and $g^*\colon \mathbb{R}\to\Gamma$ be their boundary extensions, respectively.
Define the conformal welding $h\colon \mathbb{R}\to \mathbb{R}$ associated with $\Gamma$ by
\[h\coloneqq g^{-1}\circ g^*.\]
Then the following statements are equivalent:
\begin{enumerate}
  \item $\log G'\in \mathrm{VMOA}(\mathbb{H})$;
  \item $\Gamma$ is an asymptotically smooth chord-arc curve relative to $g\colon \mathbb{R}\to\Gamma$;
  \item $g\colon \mathbb{R}\to\mathbb{C}$ is an asymptotically smooth embedding whose image $\Gamma$ is a chord-arc curve.
  \item $G$ can be extended to a quasiconformal mapping in the complex plane $\mathbb{C}$ whose complex dilatation $\mu_G(z)$ induces a vanishing Carleson measure $|\mu_G(z)|^2y\, dx\, dy$;
  \item $h$ is a strongly symmetric homeomorphism, namely, $h$ is locally absolutely continuous such that $h'$ belongs to the Muckenhoupt $A_\infty$-weights and $\log h'\in \mathrm{VMO}(\mathbb{R})$; and
  \item $S_G$ induces a vanishing Carleson measure $|S_{G}(z)|^2y^3\, dx\, dy$, where $S_G$ is the Schwarzian derivative of $G$.
  \end{enumerate}
  \end{corollary}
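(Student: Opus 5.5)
The statement is the final corollary, so the plan is to assemble it from Theorem~\ref{thm:intro-vmoa} and the cited results, with no new estimates. Theorem~\ref{thm:intro-vmoa} already gives the equivalence of (1), (2) and (3). What remains is to attach (4), (5) and (6) to condition (1), each through one cited theorem, and to check that those theorems apply under our normalization: $\Gamma$ a quasicircle through $\infty$, with $G$ and $G^*$ fixing $\infty$.

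First, (1)$\Leftrightarrow$(6). I will relate the pre-Schwarzian $(\log G')'$ to the Schwarzian $S_G=(\log G')''-\frac12((\log G')')^2$. The Carleson-measure characterization of $\mathrm{VMOA}$ recalled in Section~\ref{sec:preliminaries} says that (1) holds if and only if $|(\log G')'|^2y\,dx\,dy$ is a vanishing Carleson measure. The half-plane version of the equivalence between this condition and the vanishing Carleson condition for $|S_G|^2y^3\,dx\,dy$ is exactly \cite[Theorem 1.4]{MW}, so I will invoke it directly. If needed, one could also derive one direction from the other, since the half-plane is a univalent setting where $\log G'$ is Bloch by Proposition~\ref{bloch_norm_H:label}.

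Next, (1)$\Leftrightarrow$(4) and (1)$\Leftrightarrow$(5). For the quasiconformal extension I will use \cite[Theorem 2.2]{Sh22}. It says that the Schwarzian condition (6) is equivalent to $G$ admitting a quasiconformal extension to $\mathbb{H}^*$ whose dilatation induces a vanishing Carleson measure, and that this is also equivalent to the welding $h=g^{-1}\circ g^*$ being strongly symmetric. The latter is the $\mathrm{VMO}$-Teichm\"uller characterization on the real line. Where the welding statement is formulated differently, namely via $h'\in A_\infty$ with $\log h'\in\mathrm{VMO}$, I will bridge the two formulations with \cite[Theorem 4.1]{WM-1}. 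This gives (4)$\Leftrightarrow$(6)$\Leftrightarrow$(5). Chaining these with the previous paragraph and with Theorem~\ref{thm:intro-vmoa} closes the cycle.

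The main obstacle is making sure the normalizations of the cited results match. They may use a different half-plane, for instance $G$ conformal on the lower half-plane with the extension to the upper one, or welding defined as $g^{*-1}\circ g$. They may also use a different density convention, such as $y$ versus $2y$. Reflecting through $z\mapsto\bar z$ and inverting $h$ preserve all the conditions involved: vanishing Carleson measures are reflection-invariant, and strong symmetry is preserved under inverses by the $A_\infty$ and $\mathrm{VMO}$ invariance results in \cite{WM-1}. Constant factors do not affect vanishing conditions. I will check these points explicitly, but I expect them to be routine.
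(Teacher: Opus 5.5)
Your route is the paper's own: its proof consists solely of combining Theorem~\ref{thm:intro-vmoa} with \cite[Theorem 2.2]{Sh22}, \cite[Theorem 4.1]{WM-1} and \cite[Theorem 1.4]{MW}. The gap is in the step you call routine.

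Strong symmetry is \emph{not} preserved under inversion on $\mathbb{R}$. The $A_\infty$ condition is inverse-invariant. However, $\log (h^{-1})'=-\log h'\circ h^{-1}$ need not lie in $\mathrm{VMO}(\mathbb{R})$, which is a small-scale condition, when $h^{-1}$ fails to be uniformly continuous. For example, let $k(x)=\int_0^x e^{\sin t}(1+|t|)^{-1/2}\,dt$. Then $k'$ is comparable to the $A_1$ weight $(1+|t|)^{-1/2}$, and $\log k'$ is Lipschitz and lies in $\mathrm{BMO}$, so $k$ is strongly symmetric. But $k$ maps $[x,x+2\pi]$ onto an interval of length $\asymp(1+|x|)^{-1/2}\to 0$. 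On that interval, $\log(k^{-1})'$ still carries the full oscillation of $-\sin$. Its mean oscillation over these shrinking intervals therefore stays bounded below, and $k^{-1}$ is not strongly symmetric. This is exactly the non-compact phenomenon the paper is about, so it cannot be waved through as a normalization check.

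Nor is the orientation merely a convention. Let $F$ be a quasiconformal extension of $G$ with dilatation $\mu$ on $\mathbb{H}^*$, and let $W$ be the quasiconformal self-map of $\mathbb{H}^*$ fixing $\infty$ with the same dilatation. Then $F\circ W^{-1}$ is conformal from $\mathbb{H}^*$ onto $\Omega^*$, so it equals $G^*$ up to a real affine change of variable. On $\mathbb{R}$ this gives $W|_{\mathbb{R}}=(g^*)^{-1}\circ g=h^{-1}$ modulo affine maps. So the characterization ``strongly symmetric $\iff$ admits an extension whose dilatation induces a vanishing Carleson measure'' attaches (4) to $h^{-1}$. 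The printed $h=g^{-1}\circ g^*$ is instead the welding attached to $G^*$.

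Concretely, take the dilatation of such an extension of the above $k$ on $\mathbb{H}^*$, set it to $0$ on $\mathbb{H}$, and solve the Beltrami equation. This produces a $G$ satisfying (4) for which $g^{-1}\circ g^*=k^{-1}$ is not strongly symmetric. Hence you cannot close the cycle by ``inverting $h$''. The cited characterizations yield (5) for $h^{-1}=(g^*)^{-1}\circ g$, and item (5) has to be read that way (equivalently, with the roles of $G$ and $G^*$ interchanged).

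Relatedly, the density in (4) must be $|\mu_G(z)|^2/|y|$ on $\mathbb{H}^*$. The printed factor $y$ is not a constant multiple of this, so your ``$y$ versus $2y$'' check would not catch it.
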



\begin{thebibliography}{99}

\bibitem{Ah} L.V. Ahlfors, {\it Lectures on Quasiconformal Mappings}, 2nd ed., Univ. Lect. Ser. 38, American Mathematical Society, 2006.

\bibitem{AIM} K. Astala, T. Iwaniec and G. Martin, {\it Elliptic Partial Differential Equations and Quasiconformal Mappings in the Plane}, Princeton Mathematical Series, Vol. 48, Princeton University Press, 2009. 


\bibitem{BP} J. Becker and Ch. Pommerenke, {\it \"Uber die quasikonforme Fortsetzung schlichter Funktionen}, Math. Z. 61 (1978), 69--80. 

\bibitem{BA} A. Beurling and L.V. Ahlfors, {\it The boundary correspondence under quasiconformal mappings}, Acta Math. {\bf 96} (1956), 125--142.




\bibitem{BY} A. Brania and S. Yang, {\it Asymptotically symmetric embeddings and symmetric quasicircles}, Proc. Amer. Math. Soc. {\bf 132} (2004), 2671--2678.

\bibitem{Ca} L. Carleson, {\it On mappings, conformal at the boundary}, J. Analyse Math. {\bf 19} (1967), 1--13.




\bibitem{Fa} K. Falconer, {\it Fractal Geometry: Mathematical Foundations and Applications}, John Wiley \& Sons, 2013.







\bibitem{Ga} J.B. Garnett, {\it Bounded Analytic Functions}, Grad. Texts in Math. 236, Springer, 2007.

\bibitem{Gi} D. Girela, {\it Analytic functions of bounded mean oscillation}, Complex Function Spaces (Mekrij\"arvi, 1999), pp. 61--170, Univ. Joensuu Dept. Math. Rep. Ser. No. 4, 2001.

\bibitem{GS} F.P. Gardiner and D. Sullivan, {\it Symmetric structure on a closed curve}, Amer. J. Math. {\bf 114} (1992), 683--736.



\bibitem{JK} D.S. Jerison and C.E. Kenig, {\it Hardy spaces, $A_\infty$, and singular integrals on chord-arc domains},
Math. Scand. 50 (1982), 221--247.




\bibitem{Mac}
P. MacManus, {\it Quasiconformal mappings and Ahlfors--David curves},
Trans. Amer. Math. Soc. {\bf 343} (1994), 853--881.

\bibitem{MW}
K. Matsuzaki and H. Wei, {\it Chordal Loewner chains and Teichm\"uller spaces on the half-plane}, Math. Z. {\bf 312} (2026), 63.

 


\bibitem{Po78} Ch. Pommerenke, {\it On univalent functions, Bloch functions and $\mathrm{VMOA}$}, Math. Ann. {\bf 236} (1978), 199--208. 

\bibitem{Pom} Ch. Pommerenke, {\it Boundary Behaviour of Conformal Maps}, Springer, 1992.

\bibitem{Sar75} D. Sarason, {\it Functions of vanishing mean oscillation}, Trans. Amer. Math. Soc. {\bf 207} (1975), 391--405.

\bibitem{Smi10} S. Smirnov, {\it Dimension of quasicircles}, Acta Math. {\bf 205} (2010), 189--197.


\bibitem{Sh22} Y. Shen, {\it $\mathrm{VMO}$-Teichm\"uller space on the real line}, Ann. Acad. Sci. Fenn. Math. {\bf 47} (2022), 57--82.




\bibitem{Su} T. Sugawa, {\it The universal Teichm\"uller space and related topics},
Proceedings of the international workshop on quasiconformal mappings and their applications, pp. 261--289, Narosa Publishing House, 2007.

\bibitem{Tu}
P. Tukia, {\it Extension of quasisymmetric and Lipschitz embeddings of the real line into the plane},
Ann. Acad. Sci. Fenn. Ser. A I Math. {\bf 6} (1981), 89--94.


\bibitem{TV} P. Tukia and J. V\"ais\"al\"a, {\it Quasisymmetric embeddings of metric spaces}, Ann. Acad. Sci. Fenn. Ser. A I Math. {\bf 5} (1980), 97--114.



\bibitem{V} J. V\"ais\"al\"a, {\it Hyperbolic and uniform domains in Banach spaces}, Ann. Acad. Sci. Fenn. Math. {\bf 30} (2005), 261--302.



\bibitem{WM-1} H. Wei and K. Matsuzaki, {\it Beurling--Ahlfors extension by heat kernel, $A_\infty$-weights for $\mathrm{VMO}$, and vanishing Carleson measures}, Bull. London Math. Soc. {\bf 53} (2021), 723--739.








\end{thebibliography}
\end{document}